\theoremstyle{definition} %%% for statements in roman typeface
 \newtheorem{definition}{Definition}[section]
\theoremstyle{plain}      %%% for statements in italic typeface
 \newtheorem{proposition}[definition]{Proposition}
 \newtheorem{theorem}[definition]{Theorem}
 \newtheorem{lemma}[definition]{Lemma}
\newcommand{\R}{\mathbb{R}}
\newcommand{\Z}{\mathbb{Z}}
\newcommand{\C}{\mathbb{C}}
\newcommand{\g}{\mathcal{G}}
\newcommand{\boM}{\mathcal{M}}
\newcommand{\boF}{\mathcal{F}}
\newcommand{\boR}{\mathcal{R}}
\newcommand{\boL}{\mathcal{L}}
\newcommand{\boH}{\mathcal{H}}
\newcommand{\hol}{\textrm{Hol}}
\newcommand{\Ad}{\textrm{Ad}}
\newcommand{\ang}{\textrm{ang}}
\newcommand{\reg}{\textrm{reg}}
\newcommand{\bi}{{\bf i}}
\newcommand{\bj}{{\bf j}}
\newcommand{\bk}{{\bf k}}
\newcommand{\tgl}{\tilde{\textrm{GL}}}
\newcommand{\tr}{\textrm{Tr}}
\newcommand{\dd}{\textrm{d}}
\newcommand{\id}{\textrm{Id}}
\newcommand{\inter}{\textrm{Int}}
\newcommand{\vol}{\textrm{Vol}}
\renewcommand{\sp}{\textrm{Sp}}
\DeclareMathOperator{\coker}{coker}
\DeclareMathOperator{\rk}{rk}
\DeclareMathOperator{\im}{im}
\DeclareMathOperator{\Hom}{Hom}
\renewcommand{\phi}{\varphi}
\renewcommand{\epsilon}{\varepsilon}
\newcommand{\su}{\textrm{SU}(2)}
\newcommand{\so}{\textrm{SO}(3)}
\newcommand{\lu}{\textrm{su}(2)}
\newcommand{\tl}[1]{\widetilde{#1}}
\newcommand{\ba}[1]{\overline{#1}}
\newcommand{\te}{\tilde{e}}
\title{Geometry of representation spaces in $\su$}
\author{Julien March\'e\footnote{
Centre de Math\'ematiques Laurent Schwartz,\'Ecole Polytechnique,
Route de Saclay, 91128 Palaiseau Cedex, France,
email:\,\tt{marche@math.polytechnique.fr}
}}
\date{}
\begin{document}
\maketitle

\begin{abstract}
These notes of a course given at IRMA in April 2009 cover some aspects of the representation theory of fundamental groups of manifolds of dimension at most 3 in compact Lie groups, mainly $\su$. We give detailed examples, develop the techniques of twisted cohomology and gauge theory. We review Chern-Simons theory and describe an integrable system for the representation space of a surface. Finally, we explain some basic ideas on geometric quantization. We apply them to the case of representation spaces by computing Bohr-Sommerfeld orbits with metaplectic correction.
\end{abstract}

%\begin{classification}
%57M27; 57M25; 37E30; 81S10.
%\end{classification}

%\begin{keywords}
%Representation of groups, Chern-Simons theory, Geometric quantization.
%\end{keywords}

\tableofcontents

\section{Introduction}

Representations of fundamental groups of manifolds of dimension 2 and 3 in a compact Lie group have a long history. In the case of surfaces, they appeared after the development of Teichmuller theory, for instance to classify holomorphic vector bundles. In the case of 3-manifolds, they were used to help distinguishing 3-manifolds, as knot complements or for geometrization purposes.
Then, in the eighties, representations of surfaces were much studied for their symplectic properties until Witten discovered a  deep relationship between these representation spaces and the Jones polynomial of knots, via Chern-Simons quantum field theory. This field is still very lively and these notes were written as a preparation for understanding this relationship. We planned to give a quick review of the geometric aspects of the representation spaces.

In a first part, we give some examples of representation spaces for surfaces and knots. They will help the reader to understand the second part where we introduce the basic tool in order to understand the differential geometry of representation spaces : twisted (co)homology. We give a brief account on the symplectic structure on surfaces and on Reidemeister torsion.
We introduce gauge theory in the third part and review the symplectic structure of surface representations in this context. This third part is mostly an introduction to the fourth where we explain the basic constructions of Chern-Simons theory and its applications to the geometry of representation spaces.
The fifth part studies in more details the representation space of a closed surface of genus at least 2 by introducing trace functions and a related integrable system.
In the last part, we give an introduction to geometric quantization, insisting on Lagrangian fibrations and spin structures. With the help of some examples, we treat the case of representation space of surfaces.

\section{Examples of representation spaces}

\subsection{Generalities on SU(2)}
We define the group $\su$ as the group of matrices $M\in \text{M}(2,\C)$ satisfying $M\ba{M}^T=1$ and $\det M=1$.
We can write this set alternatively as
$$\su=\{
\begin{pmatrix}\alpha&-\ba{\beta}\\ \beta&\ba{\alpha}\end{pmatrix}
,\alpha,\beta \in \C, |\alpha|^2+|\beta|^2=1\}.$$
This last description shows that $\su$ is topologically a sphere $S^3$.

We often look at $\su$ as the unit sphere in the space of quaternions $\mathbb{H}$ where the standard generators are the following
$$\bf{i}=\begin{pmatrix}i&0\\0&-i\end{pmatrix}, \bf{j}=\begin{pmatrix}0&1\\-1&0\end{pmatrix},\bf{k}=\begin{pmatrix}0&i\\i&0\end{pmatrix}.$$
These generators also form a basis of the Lie algebra $\lu$ of $\su$.
Observe the following elementary fact: given $M\in \su$, there is a unique $\phi\in [0,\pi]$ such that $M$ is conjugate to $\begin{pmatrix}e^{i\phi}&0\\0&e^{-i\phi}\end{pmatrix}$. We will call $\phi$ the angle of $M$, it can be easily computed via the formula $\tr(M)=2\cos(\phi)$. We will define $\ang(M)=\arccos(\tr(M)/2)$.

\subsection{Generalities on representation spaces}
Let $\Gamma$ be a finitely generated group. For our purposes, $\Gamma$ will be the fundamental group of a compact manifold or of a finite CW-complex. We denote by $\boR(\Gamma,\su)$ the set of homomorphisms from $\Gamma$ to $\su$. 

As $\Gamma$ is finitely generated, say by $t_1,\ldots,t_n$, any element $\rho\in \boR(\Gamma,\su)$ is determined by the image of the generators. This gives an embedding of $\boR(\Gamma,\su)$ into $\su^n$ sending $\rho$ to the family $(A_i=\rho(t_i))_{i\le n}$. Moreover, let $R_1,\ldots,R_m$ be a generating family of relations for $\Gamma$. A family $(A_i)$ defines a representation if and only if $R_j(A_i)=1$ for all $j\in \{1\ldots m\}$.
Here are two easy consequences:
\begin{enumerate}
\item $\boR(\Gamma,\su)$ is a topological space (as a subspace of $\su^n$).
\item $\boR(\Gamma,\su)$ is a real algebraic variety as $\su$ is algebraic and relations are algebraic maps. More precisely, $\su$ may be defined as $\{(x,y,z,t)\in \R^4, x^2+y^2+z^2+t^2=1\}$ putting $\alpha=x+iy$ and $\beta=z+it$.
\end{enumerate}
It is easy to verify that these structures do not depend on the choices of generators and relations.

We will say that two representations $\rho,\rho'$ are conjugate if there is $M\in \su$ such that $\rho'=M\rho M^{-1}$. We denote by $\boM(\Gamma,\su)$ the set of conjugacy classes of representations, or the quotient $\boR(\Gamma,\su)/\su$.
This construction allows us to define $\boM(X,\su)=\boM(\pi_1(X),\su)$ for any topological space $X$. The ambiguity in $\pi_1(X)$ is precisely described by a conjugation and hence disappear in the quotient. We deduce the following consequences for $\boM(X,\su)$ where $\pi_1(M)$ is finitely generated:
\begin{enumerate}
\item $\boM(\Gamma,\su)$ is a topological space (quotient topology).
\item $\boM(\Gamma,\su)$ can be given a structure of a real algebraic variety, but we will not deal with this topic in these notes.
\end{enumerate}
\subsection{Basic examples}
Let us remove $\su$ from our notation. Our first example is  $\boM(S^1)$ which is the set of conjugacy classes of $\su$. It is homeomorphic to $[0,\pi]$ via the angle map. The two boundary points correspond to the conjugacy classes of the central elements $\pm 1$.

Let $X=S^1\vee S^1$. Then $\boM(X)=\{(A,B)\in \su^2\}/\su$ is the set of conjugacy classes of pairs of matrices. 
Let $\phi$ and $\psi$ be the angles of $A$ and $B$ respectively. One can suppose up to conjugation that $A=\begin{pmatrix}e^{i\phi}&0\\ 0&e^{-i\phi}\end{pmatrix}$ and that there exists $P\in \su$ such that $B=P\begin{pmatrix}e^{i\psi}&0\\ 0&e^{-i\psi}\end{pmatrix}P^{-1}$. Multiplying $P$ on the right by $\begin{pmatrix}e^{iy}&0\\ 0&e^{-iy}\end{pmatrix}$ do not change $B$ whereas multiplying $P$ on the left by $\begin{pmatrix}e^{ix}&0\\ 0&e^{-ix}\end{pmatrix}$ conjugates $B$ by a matrix which do not act on $A$ by conjugation.
Given $P=\begin{pmatrix}\alpha&-\ba{\beta}\\ \beta&\ba{\alpha}\end{pmatrix}$, we compute 
$\begin{pmatrix}e^{ix}&0\\0&e^{-ix}\end{pmatrix} P \begin{pmatrix}e^{iy}&0\\ 0&e^{-iy}\end{pmatrix}=
\begin{pmatrix}\alpha e^{i(x+y)} & -\ba{\beta}e^{i(x-y)}\\ \beta e^{i(y-x)} & \ba{\alpha}e^{-i(x+y)}\end{pmatrix}$. 
By setting $x+y=-\arg(\alpha), x-y=\arg(\beta)$ one can suppose that $\alpha$ and $\beta$ are real and non negative.
We compute $B=\begin{pmatrix} \alpha^2 e^{i\psi}+\beta^2e^{-i\psi} & \alpha\beta (e^{i\psi}-e^{-i\psi})\\
 \alpha\beta (e^{i\psi}-e^{-i\psi})& \beta^2e^{i\psi}+\alpha^2 e^{-i\psi}\end{pmatrix}$.
 This formula describes all possible values of $B$ where $\alpha,\beta\ge 0$ and $\alpha^2+\beta^2=1$. To show that all these pairs $(A,B)$ are not conjugate, we compute $\tr(AB)=e^{i\phi}(\alpha^2  e^{i\psi}+\beta^2e^{-i\psi})+e^{-i\phi}(\beta^2e^{i\psi}+\alpha^2 e^{-i\psi})=2\alpha^2\cos(\phi+\psi)+2\beta^2\cos(\phi-\psi)$.

 Let $\eta\in [0,\pi]$ be the angle of $AB$. We see that $\cos(\eta)$ is a convex combination of $\cos(\phi+\psi)$ and $\cos(\phi-\psi)$.
 We deduce that $\eta$ belongs to the interval $[|\phi-\psi|,\min(\phi+\psi,2\pi-\phi-\psi)]$.

 We can sum up our computations in the following proposition:
 \begin{proposition} \label{pants}
The map from $ \boM(S^1\vee S^1)$ to $\{(\phi,\psi,\eta)\in [0,\pi]^3, \phi+\psi+\eta\le 2\pi,\phi\le \psi+\eta, \psi\le\phi+\eta,\eta\le\phi+\psi\}$ sending
$[A,B]$ to the triple $(\ang(A),\ang(B),\ang(AB))$ is an homeomorphism.
  \end{proposition}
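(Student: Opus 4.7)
The plan is to observe that the preceding computations already do almost all the work, and then to package the remaining steps as a topological bijection argument between compact Hausdorff spaces. Concretely, I would verify continuity, image characterization, surjectivity, and injectivity separately, and then invoke the fact that a continuous bijection between compact Hausdorff spaces is automatically a homeomorphism.

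First I would check that the map is well-defined and continuous. Conjugation invariance of the trace makes $\ang$ descend to $\boM(S^1\vee S^1)$, and all operations are continuous. The source is the continuous image of the compact set $\su\times \su$ under the quotient map, hence compact; since the $\su$-action on $\su^2$ is by a compact group, the quotient is also Hausdorff. The target region is a closed bounded subset of $[0,\pi]^3$, hence compact Hausdorff. For the image containment, the computation preceding the statement shows that $\cos\eta$ is a convex combination of $\cos(\phi+\psi)$ and $\cos(\phi-\psi)$, which forces $\eta\in[|\phi-\psi|,\min(\phi+\psi,2\pi-\phi-\psi)]$, and these conditions are exactly the triangle inequalities together with $\phi+\psi+\eta\le 2\pi$.

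For surjectivity I would reverse the same computation: given $(\phi,\psi,\eta)$ in the target, the map $\alpha^2\mapsto 2\alpha^2\cos(\phi+\psi)+2(1-\alpha^2)\cos(\phi-\psi)$ is continuous affine on $[0,1]$ with endpoint values $2\cos(\phi-\psi)$ and $2\cos(\phi+\psi)$, so the intermediate value theorem produces an $\alpha\in[0,1]$ with $\tr(AB)=2\cos\eta$, and the normal form then assembles an explicit pair $(A,B)$ mapping to the triple. For injectivity, I would reuse the normal form: given any pair, conjugation reduces to $A=\mathrm{diag}(e^{i\phi},e^{-i\phi})$ and $B$ depending only on the single parameter $\alpha\in[0,1]$ (with $\beta=\sqrt{1-\alpha^2}$). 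Since $\tr(AB)$ is an affine function of $\alpha^2$ with slope $-4\sin\phi\sin\psi$, it is strictly monotonic in $\alpha^2$ whenever $\phi,\psi\in(0,\pi)$, so $\alpha^2$, and hence the orbit, is uniquely determined by $\eta$.

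The main obstacle I anticipate is the treatment of the degenerate cases $\phi\in\{0,\pi\}$ or $\psi\in\{0,\pi\}$, where the slope $-4\sin\phi\sin\psi$ vanishes and the normal form collapses. There $A$ or $B$ is central, so the third angle is forced (for instance $\eta=\psi$ if $A=I$, or $\eta=\pi-\psi$ if $A=-I$), and the corresponding orbit depends only on the non-central angle; one checks directly that these loci are sent homeomorphically onto the boundary facets of the target region. Once injectivity is verified case by case, the proof concludes: a continuous bijection between compact Hausdorff spaces is a homeomorphism, which yields the proposition.
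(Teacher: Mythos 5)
Your proposal is correct and follows essentially the same route as the paper: the normal form with $A$ diagonal and $P$ reduced to real nonnegative entries, the formula $\tr(AB)=2\alpha^2\cos(\phi+\psi)+2\beta^2\cos(\phi-\psi)$ for both image characterization and injectivity, and surjectivity by solving the resulting affine equation for $\alpha^2$. You add explicit topological packaging (compact Hausdorff source and target, continuous bijection implies homeomorphism) and spell out the boundary cases $\phi,\psi\in\{0,\pi\}$; the paper leaves these points implicit, merely summarizing the computation as the proposition.
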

 
 The symmetry between $\phi,\psi,\eta$ can be explained by replacing $X$ with a pair of pants $\Sigma$, that is a disk with two holes. The three angles $\phi,\psi,\eta$ are the angles of the three boundary components, and the inequations they satisfy are symmetric with respect to these coordinates. In Figure \ref{tetra} is represented the moduli space $\boM(S^1\vee S^1)$ where axes correspond to the angles of $A,B$ and $AB$. Notice that the corners of this tetrahedron correspond to central representations whereas its boundary corresponds to abelian representations.

 \begin{figure}[h]\label{tetra}
 \begin{center}
 \includegraphics[width=7cm]{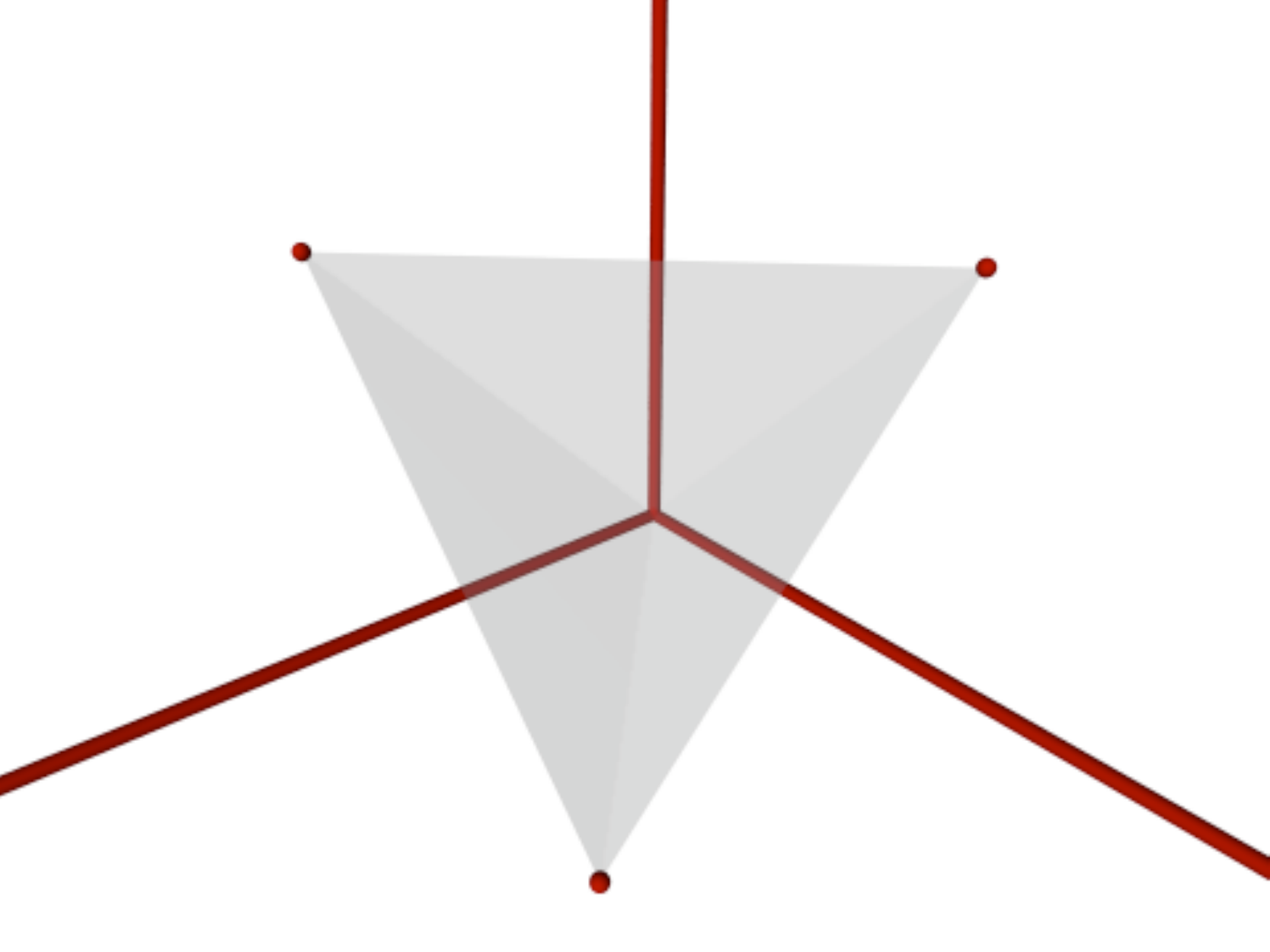}
 \caption{The moduli space of a free group with two generators}
 \end{center}
 \end{figure}

Our last easy but important example is the torus $S^1\times S^1$. One sees that $\boM(S^1\times S^1)=\{(A,B)\in \su^2, AB=BA\}/\su$.

The map sending $(\phi,\psi)\in\R/2\pi\Z\times \R/2\pi\Z$ to the representation defined by $A=\begin{pmatrix}e^{i\phi}&0\\ 0&e^{-i\phi}\end{pmatrix}$ and $\psi=\begin{pmatrix}e^{i\psi}&0\\ 0&e^{-i\psi}\end{pmatrix}$ is surjective as two commuting matrices are simultaneously diagonalizable. Moreover, if two pairs $(\phi,\psi)$ and $(\phi',\psi')$ are conjugate then either $(\phi',\psi')=(\phi,\psi)$ or $(\phi',\psi')=(-\phi,-\psi)$. One deduces that $\boM(S^1\times S^1)$ is homeomorphic to the torus $(\R/2\pi\Z)^2$ quotiented by the involution $(\phi,\psi)\mapsto(-\phi,-\psi)$. This is a sphere with 4 conical points of angle $\pi$ corresponding to representations with values in $\{\pm 1\}$. This moduli space appears as the boundary of the tetrahedron in Figure \ref{tetra}.

 With the same proof, we show that $\boM(S^1\times S^1\times S^1)$ is homeomorphic to $(\R/2\pi\Z)^3/I$ where $I(\phi,\psi,\eta)=(-\phi,-\psi,-\eta)$.

\subsection{Some representations of knot complements}\label{noeuds}
Let us look at two families of knots whose fundamental groups have the property of being presented by two generators and one relation, which makes the study of their representation space much simpler than the general case.

Before going further, let us introduce some terminology.
\begin{definition} Let $X$ be a topological space and $\rho\in \boR(X)$ a representation.
\begin{enumerate}
\item If $\rho$ takes values in $\{\pm 1\}$, we will say that $\rho$ is {\it central}.
\item If the image of $\rho$ is contained in an abelian subgroup of $\su$, we will say that $\rho$ is {\it abelian}.
\item In the other cases, $\rho$ will be said {\it irreducible}.
\end{enumerate}
\end{definition}
These definitions are invariant by conjugation hence we will use the same terminology for elements of $\boM(X)$.
A central representation is given by an homomorphism from $\pi_1(X)$ to $\Z/2$. These representations are in bijection with $H^1(X,\Z/2)$.
An abelian representation is given by an homomorphism from $\pi_1(X)$ to $S^1$, hence these representations are in bijection with $H^1(X,S^1)$.
However, as in the case of the torus, the inversion map $I:S^1\to S^1$ induces a map $I_*$ on $H^1(X,S^1)$ and conjugacy classes of abelian representations are in bijective correspondence with $H^1(X,S^1)/I_*$.

\subsubsection{Torus knots}

Let $F(\phi,\psi)=\left((2+\cos\phi)\cos\psi,(2+\cos\phi)\sin\psi,\sin\phi\right)$ be the standard embedding of $(\R/2\pi\Z)^2$ in $\R^3$.

Given $a,b$ two positive and relatively prime integers, we define the torus knot $T(a,b)$ as the image of the embedding $t\mapsto F(at,bt)$. One can see on Figure \ref{tore} the example of $T(5,2)$.

\begin{figure}[h]\label{tore}
\begin{center}
\includegraphics{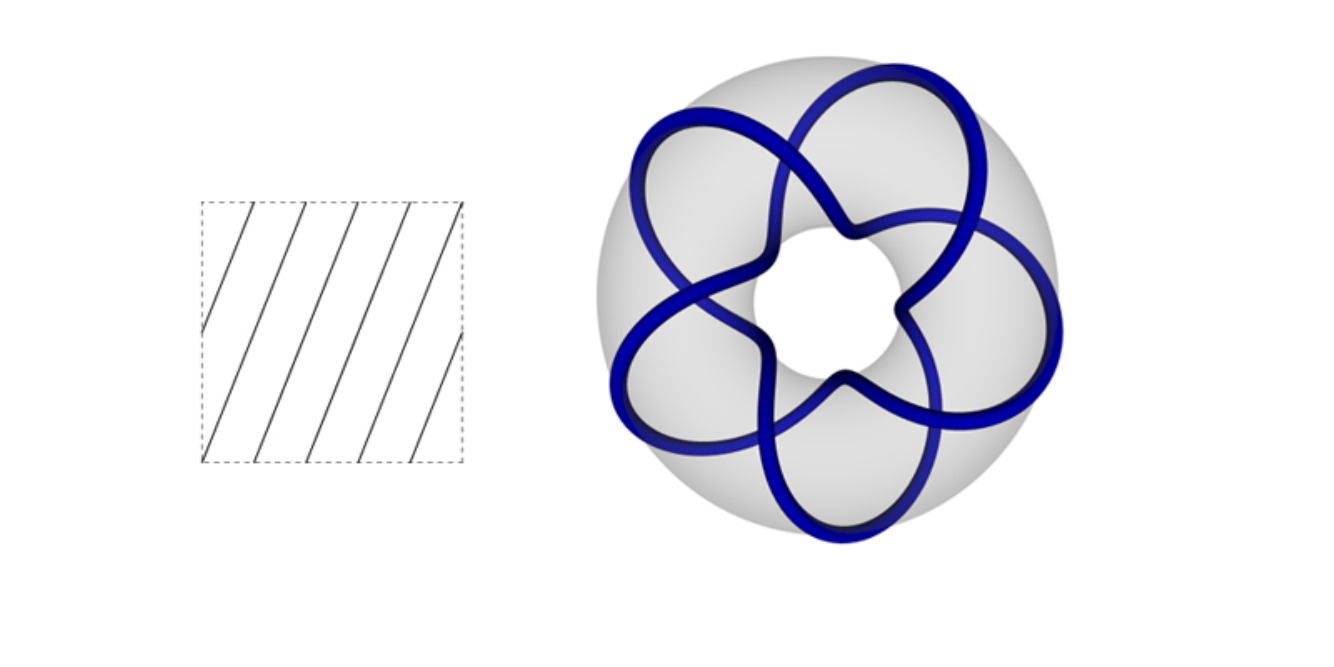}
\vspace{-1cm}
\caption{A torus knot}
\end{center}
\end{figure}
Let us look at this knot in $S^3=\R^3\cup \{\infty\}$. The complement of $T(a,b)$ cut along the torus supporting the knot has two components which are solid tori. Their intersection $C$ is the complement of the knot inside the torus which is an annulus, hence connected.
Then the Van-Kampen theorem asserts that $G_{a,b}=\pi_1(S^3\setminus T(a,b))=\langle u,v|u^a=v^b\rangle$. One can show indeed that the generator of $\pi_1(C)$ has order $a$ in one side and $b$ in the other, which gives the expression of $G_{a,b}$.

Let $\rho: G_{a,b}\to \su$ an irreducible representation. Then $\rho(u^a)=\rho(v^b)$ commutes with the image of $\rho$ and hence has to belong to the center. We deduce that $\rho(u)^{2a}=\rho(v)^{2b}=1.$

Then, the angle of $\rho(u)$ takes the values $k\pi/a$ for $0<k<a$ and the angle of $\rho(v)$ take the values $l\pi/b$ for $0<l<b$. As $\rho(u^a)=(-1)^k=\rho(v^b)=(-1)^l$ one has $k=l \mod 2$. The angle of $\rho(uv)$ determines the representation and takes its values in the non trivial interval $\pi[|k/a-l/b|,\min(k/a+l/b, 2-k/a-l/b)]$.
Hence the irreducible part of $\boM(G_{a,b})$ is a disjoint union of $(a-1)(b-1)/2$ arcs. The ends of these arcs are made of abelian representations which we describe know. To find abelian representations, one can suppose that $\rho(u)$ and $\rho(v)$ are diagonal with angle $\phi$ and $\psi$ respectively. Then the angles should satisfy $a\phi=b\psi\mod 2\pi$. We obtain all solutions by taking $\phi=bt, \psi=at$ and letting $t$ in $[0,\pi]$. The ends of the irreducible segments are equally reparted on the reducible segment as in Figure \ref{trefle} where we see on the left the abstract moduli space of the trefoil knot $T(3,2)$ and on the right, the way it is embedded in the tetrahedron.

\begin{figure}[h]\label{trefle}
\begin{center}
\includegraphics[width=12cm]{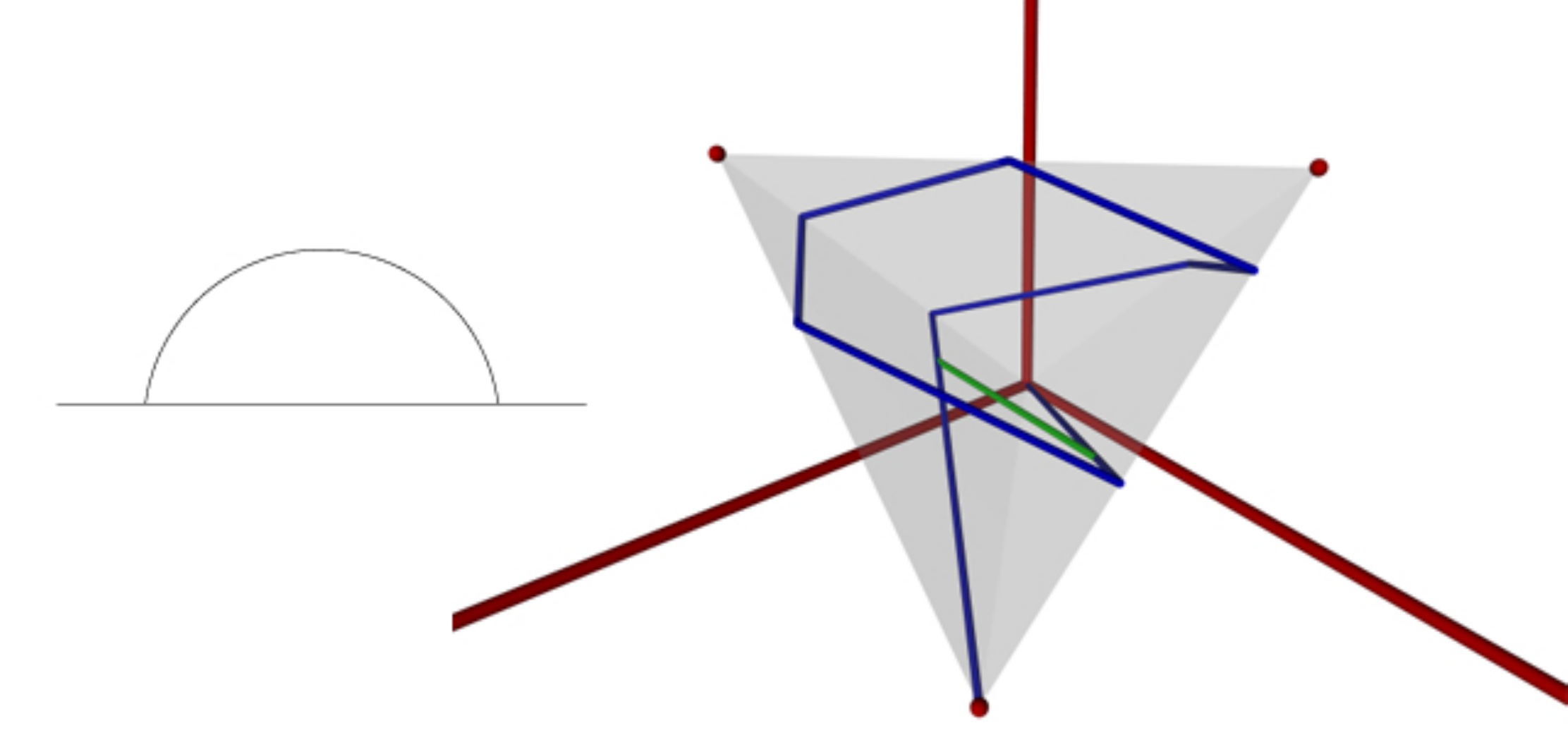}
%\vspace{-1cm}
\caption{Moduli space of the trefoil}
\end{center}
\end{figure}

\subsubsection{Two-bridge knots}
A two-bridge knot is a knot in $\R^3$ which is in Morse position relatively to some coordinate and has only 2 maxima and minima.

All 2-bridge knots can be put in a standard projection called Schubert normal form, see \cite{bz,sch}. Let $a$ and $b$ be relative integers such that $a$ is positive, $b$ is odd and the inequality $-a<b<a$ holds.
Formally, we define the projection of the two bridge knot $B(a,b)$ as the unique diagram obtained by gluing two copies of the disc in the left hand side of figure \ref{twobridge} by a diffeomorphism of the boundary circle sending $p_k$ to $p_{b-k}$ for $k\in \Z/2a\Z$.
We draw on the right of the figure the example of the knot $B(5,3)$.

\begin{figure}[htbp]\label{twobridge}
\begin{center}
\begin{pspicture}(-0.5,0)(7,7)
\includegraphics[height=6cm,width=11cm]{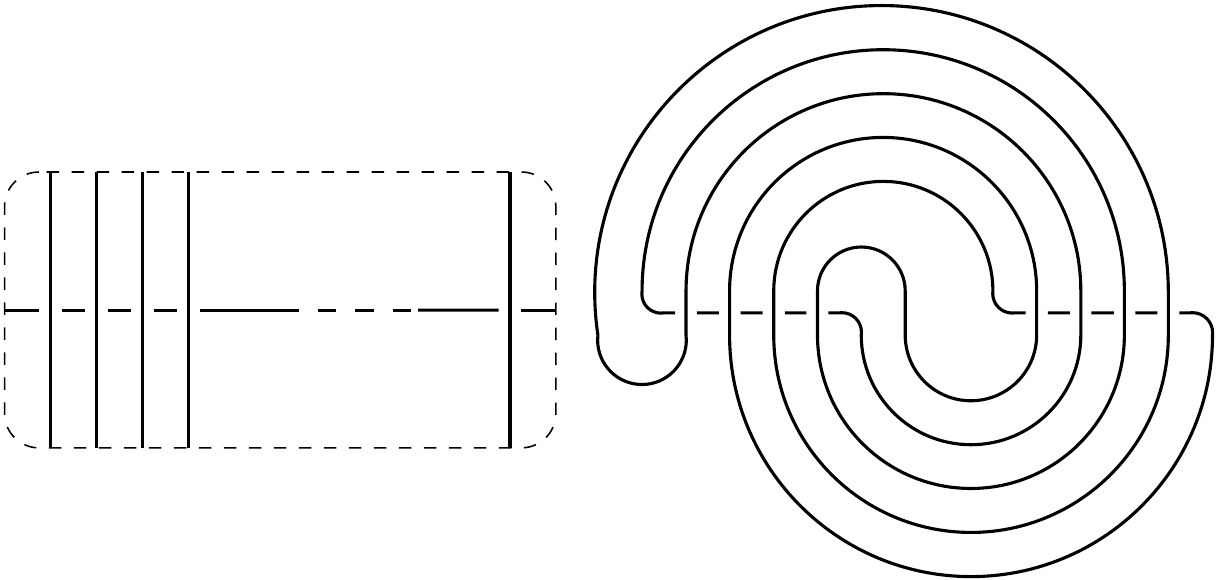}
\put(-11.4,2.8){$p_0$}
\put(-11,4.4){$p_1\cdots$}
\put(-6.5,4.4){$p_{b-1}$}
\put(-11,1){$p_{2b-1}\cdots$}
\end{pspicture}
\vspace{-1cm}
\caption{A two-bridge knot}
\end{center}
\end{figure}
Let $H_{a,b}$ be the fundamental group of the complement of $B(a,b)$. Then it has the following presentation: $H_{a,b}=\langle u,v| wu=vw\rangle$.
In this formula, $w=u^{e_1}v^{e_2}\cdots v^{e_{a-1}}$, where for all $k$, we set $e_k=(-1)^{\lfloor kb/a\rfloor}$.

The proof is based on Wirtinger presentation of knot groups. Removing the two under-bridges, that is the two copies of the segment joining $p_0$ to $p_b$, we obtain two disjoint arcs which correspond to the generators $u$ and $v$. Using Wirtinger relation at each crossing, one can label all the remaining arcs in the projection and the labeling is consistent providing that the relation $wu=vw$ is satisfied. This explains the presentation of $H_{a,b}$. For a precise proof, see \cite{bz,sch}.

As usual, a representation $\rho$ is determined by the traces $x=\tr \rho(u)=\tr \rho(v)$ (because $u$ and $v$ are conjugate) and $y=\tr\rho(uv)$. The equality $uw=wv$ is equivalent to $\tr(uwv^{-1}w^{-1})=2$ and this equality converts into a polynomial in $x$ and $y$ thanks to the following lemma.
\begin{lemma}
For all $A,B\in \su$ one has $\tr(AB)+\tr(AB^{-1})=\tr(A)\tr(B)$.
For any word $W$ in $A,B$, there is a polynomial in three variables $F_{W}$ such that $\tr(W)=F_{W}(\tr(A),\tr(B),\tr(AB))$.
\end{lemma}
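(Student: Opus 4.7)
The plan is to derive both assertions from the Cayley–Hamilton theorem for $\mathrm{SL}_2$. Any unimodular $2\times 2$ matrix $M$ satisfies $M^2 - \tr(M)\,M + \id = 0$, hence $M + M^{-1} = \tr(M)\,\id$. Taking $M = B$, left-multiplying by $A$ and tracing yields the first identity
$\tr(AB) + \tr(AB^{-1}) = \tr(A)\tr(B)$; as a side benefit, $\tr(M^{-1}) = \tr(M)$ for every $M \in \su$.

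For the second assertion, set $x = \tr A$, $y = \tr B$, $z = \tr(AB)$ and induct on the length $n$ of a reduced word $W$ in $\{A^{\pm 1}, B^{\pm 1}\}$. The base cases $n \le 2$ are immediate: for instance $\tr(AB^{-1}) = xy - z$ by the first identity, $\tr(A^{-1}B^{-1}) = \tr(BA) = z$ since $\tr(M^{-1}) = \tr(M)$, and $\tr(A^{\pm 2}) = x^2 - 2$ by a direct application of Cayley–Hamilton.

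For $n \ge 3$ I distinguish two cases. If $W$ contains two consecutive copies of the same signed letter, $W = W_1\, g^{\pm 2}\, W_2$, then substituting $g^{\pm 2} = \tr(g)\,g^{\pm 1} - \id$ expresses $\tr(W)$ as a $\Z[x,y]$-combination of $\tr(W_1\, g^{\pm 1}\, W_2)$ (length $n - 1$) and $\tr(W_1 W_2)$ (length $n - 2$), and induction applies. Otherwise $W$ is strictly alternating, i.e.\ consecutive letters lie in different families $\{A^{\pm 1}\}$ and $\{B^{\pm 1}\}$; using cyclic invariance of $\tr$, I pick an index $s$ with $1 \le s \le n - 1$ so that $g_s$ and $g_n$ lie in the same family (always possible when $n \ge 3$, by a parity check on the alternation) and split $W = UV$ at that position. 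The first identity applied to $U$ and $V$ gives $\tr(W) = \tr(U)\tr(V) - \tr(UV^{-1})$. The junction $g_s g_n^{-1}$ appearing in $UV^{-1}$ has the form $h^{\epsilon - \delta}$ with $h \in \{A, B\}$, so it is either trivial, producing genuine length reduction, or a squared letter, reducible by the first case. In either subcase all traces on the right-hand side involve words of length strictly less than $n$, and induction closes.

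The main technical point is the alternating step: one must check that some splitting $W = UV$ always yields effective simplification rather than trading one length-$n$ word for another; this is exactly what the parity/alphabet argument above guarantees. Everything else is routine algebra driven by the first identity and Cayley–Hamilton.
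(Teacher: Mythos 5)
Your proof is correct and takes essentially the same route as the paper: the first identity follows from the Cayley--Hamilton relation $M + M^{-1} = \tr(M)\,\id$ for unimodular $M$, and the second assertion is proved by induction on word length using that identity. The paper only sketches the recursion (citing Culler--Shalen); your explicit case analysis---collapsing a repeated letter via $g^{\pm 2} = \tr(g)\,g^{\pm 1} - \id$, and splitting a strictly alternating word $W = UV$ so that the junction in $UV^{-1}$ produces a cancellation or a square---correctly fills in the details the paper leaves to the reference.
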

\begin{proof}
The Cayley-Hamilton identity gives $B^2-\tr(B)B+1=0$. Multiplying by $AB^{-1}$ and taking the trace, we get the first identity.
We prove the second assertion recursively on the length of $W$ by applying the first identity in a convenient way, see for instance \cite{cs}.
\end{proof}

We finally proved that there exists a family of polynomials $F_{a,b}\in \Z[x,y]$ such that
$\boM(H_{a,b})=\{x,y\in\R^2, F_{a,b}(x,y)=0, x^2-2\le y\le 2\}$.
The inequality $ x^2-2\le y\le 2$ is the trace of the inequalities we viewed in the case of $S^1\vee S^1$. 
The equality $y=2$ holds if and only if $uv=1$ and $y=x^2-2$ if and only if $uv^{-1}=1$. This last equality occurs precisely for abelian representations of $H_{a,b}$.

The next proposition simplifies the computation of $F_{a,b}$.
\begin{proposition}[T.Q.T.Le]
Let $w$ be the word associated to $H_{a,b}$ and set $w_n$ be the word $w$ with the $n$ first and $n$ last letters removed.
Then $F_{a,b}=\sum_{n=0}^{(a-1)/2} (-1)^n F_{w_n}$ where we set $F_{1}=1$.
\end{proposition}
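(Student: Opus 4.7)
The plan is to establish the formula by an inductive trace-calculus argument, iterating the Fricke identity $\tr(AB) + \tr(AB^{-1}) = \tr(A)\tr(B)$ on the auxiliary polynomials $T_n(x,y) := \tr(w_n u w_n^{-1} v^{-1})$ for $0 \le n \le (a-1)/2$. Two preliminary facts are used throughout. First, the palindrome property $e_k = e_{a-k}$ for $1 \le k \le a-1$: this follows from the identity $\lfloor kb/a \rfloor + \lfloor (a-k)b/a \rfloor = b - 1$ (valid because $\gcd(a,b)=1$ forces $kb/a \notin \Z$) together with the hypothesis that $b$ is odd, giving $(-1)^{b-1} = 1$. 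Consequently the trimmed subword $w_n$ begins with the letter of index $n+1$ and ends with the letter of index $a-1-n$; these two letters carry the same exponent $\epsilon := e_{n+1}$ but are of different types ($u$ vs.\ $v$), because the positions $n+1$ and $a-1-n$ have opposite parities. In particular $w_{(a-1)/2}$ is the empty word, which motivates the base convention $F_1 = 1$.

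The heart of the argument is a one-step recursion of the form
\[
T_n + T_{n+1} \;=\; (y - x^2 + 2)\, F_{w_n} \;+\; 2.
\]
To derive it, write $w_n = s\, w_{n+1}\, t$ where $s$ and $t$ are the two outer letters, and expand
\[
T_n = \tr\bigl(s\, w_{n+1}\, t\, u\, t^{-1}\, w_{n+1}^{-1}\, s^{-1}\, v^{-1}\bigr).
\]
A cyclic permutation brings $s^{-1}$ next to $v^{-1}$, and then the Fricke identity is applied in the grouping $A = s^{-1} v^{-1} s$, $B = w_{n+1}\,(t u t^{-1})\, w_{n+1}^{-1}$, using $\tr(A) = \tr(v) = x$ and $\tr(B) = \tr(u) = x$. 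The sum $T_n + T_{n+1}$ (the latter corresponding to dropping $s$ and $t$) collapses to a product involving the abelian factor $y - x^2 + 2$ and the trace $\tr(w_n) = F_{w_n}$. Telescoping downward from the base case $T_{(a-1)/2} = \tr(u v^{-1}) = x^2 - y$ then yields
\[
T_0 - 2 \;=\; (y - x^2 + 2)\sum_{n=0}^{(a-1)/2}(-1)^n F_{w_n}
\]
up to an overall sign, and this sum is exactly the Riley polynomial $F_{a,b}$ of the proposition.

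The main obstacle will be the sign and coefficient bookkeeping in the Fricke step: one has to verify that all the auxiliary trace terms produced when peeling the outer syllables $s, t$ collapse into $\pm F_{w_n}$ times the abelian factor $y - x^2 + 2$, and this requires splitting into cases depending on the type and sign of $s, t$ and on the parity of $n$. A secondary subtlety is that the raw equation $T_0 = 2$ vanishes identically on the abelian locus $y = x^2 - 2$ (abelian representations automatically satisfy $wu = vw$), so the abelian factor is forced to appear as a common divisor of $T_0 - 2$; the proposition records that the quotient is exactly the alternating sum $\sum_{n=0}^{(a-1)/2} (-1)^n F_{w_n}$, which is the content one extracts from the telescoping.
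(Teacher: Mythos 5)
The paper states this proposition with no proof (it is attributed to Le with a bibliographic pointer), so your attempt has to stand on its own; it contains a genuine gap that no amount of sign and coefficient bookkeeping will close.

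The preliminaries are correct: the palindrome identity $e_k = e_{a-k}$ does follow from $\lfloor kb/a\rfloor + \lfloor (a-k)b/a\rfloor = b-1$ and the oddness of $b$, and your observation that the outer letters of $w_n$ carry the same exponent but opposite type (one $u$, one $v$) is right for $a$ odd. The failure is in the central recursion, and more basically in the choice of $T_n=\tr(w_nuw_n^{-1}v^{-1})$ as the quantity to iterate. When $n$ is odd the outer pair is $s=v^{\epsilon}$, $t=u^{\epsilon}$, so $s^{-1}v^{-1}s=v^{-1}$ and $tut^{-1}=u$, giving $T_n=T_{n+1}$ identically -- e.g.\ for $B(5,1)$ one gets $T_1=T_2=x^2-y$, while your right-hand side $(y-x^2+2)F_{vu}+2=(y-x^2+2)y+2$ is something else entirely. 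More fundamentally, the conclusion $T_0-2=\pm(y-x^2+2)\sum_n(-1)^nF_{w_n}$ is impossible on degree grounds: $T_0$ is the trace of a word of length $2a$, so it has $y$-degree roughly $a$, whereas $(y-x^2+2)\sum(-1)^nF_{w_n}$ has $y$-degree roughly $(a+1)/2$. The reason is that in $\su$ the function $\tr(g)-2$ is negative semidefinite and vanishes to \emph{second} order at $g=1$, so $T_0-2$ vanishes to second order along the representation curve; for $B(5,1)$ at $x=0$ one computes explicitly $T_0-2=-(y+2)(y^2-y-1)^2$, which is the abelian factor times $F_{5,1}^2$, not $F_{5,1}$. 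No linear telescoping of the $T_n$ can extract that square root. A correct proof must work one level below traces, with matrix entries: put $u,v$ in opposite triangular normal forms with the same diagonal eigenvalues, compute the word matrix $W$, and take the defining polynomial to be the relevant off-diagonal entry of $Wu-vW$ (the Riley polynomial). That scalar quantity is linear, not quadratic, in the obstruction, and it does satisfy a genuine two-term recursion under stripping the outer letters of $w$, which is what Le's alternating-sum formula encodes.
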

For instance, the figure eight knot 4.1 is $B(5,3)$ and then one compute $w=uv^{-1}u^{-1}v$ and $F_{5,3}=x^2y-y^2-2x^2+3$.
The torus knot $T(5,2)=5.1=B(5,1)$ has $w=uvuv$ and $F_{5,1}=y^2-y-1$.
One recover the corresponding representation spaces on Figure \ref{2pontrep}.
\begin{figure}[h]\label{2pontrep}
\begin{center}
\includegraphics[width=12cm]{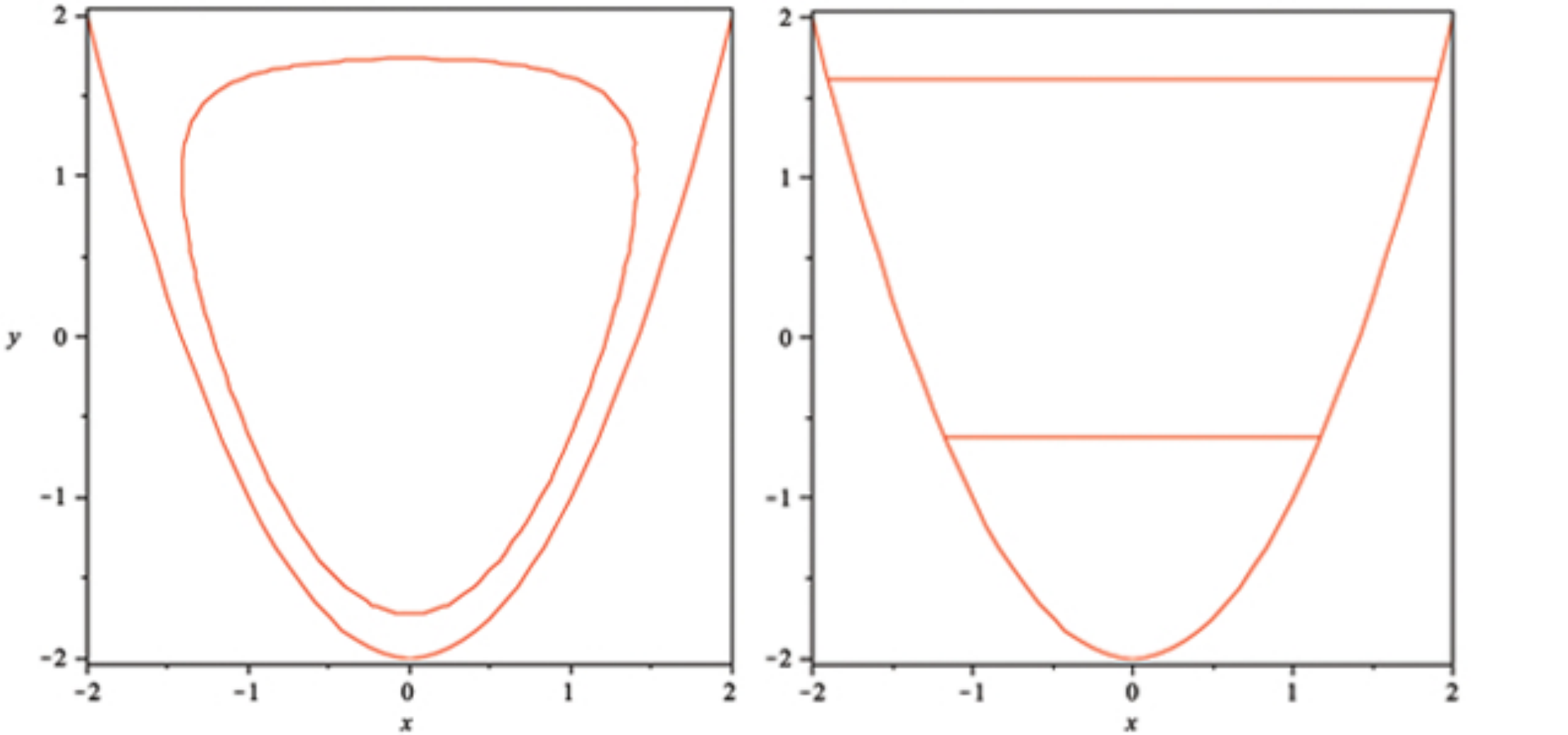}
%\vspace{-1cm}
\caption{Moduli space of 4.1 and 5.1}
\end{center}
\end{figure}

\section{Differentiable structure and twisted cohomology}
Let $X$ be a topological space whose fundamental group is finitely generated. Let $\pi_1(X)=\langle t_1,\ldots,t_n|R_1,\ldots,R_m\rangle$ be a presentation.
We recall that $\boM(X)$ is the quotient of $\boR(X)$ by an action of $\su$ and that $\boR(X)$ is identified to the preimage of $1$ by the map $R:\su^n\to\su^m$ defined by $
R(A_i)=(R_j(A_i))$. The latter space will be a submanifold of $\su$ provided that $R$ is a submersion on the preimage of $1$. The purpose of this chapter is to use this argument in a systematic way.

In what concerns the quotient, we see that the stabilizer of a representation $\rho$ has the following form:
\begin{enumerate}
\item $\su$ if $\rho$ is central.
\item $S^1$ if $\rho$ is abelian.
\item $\{\pm 1\}$ if $\rho$ is irreducible.
\end{enumerate}
For a good geometric quotient, we will need the stabilizer to be constant and moreover, the biggest part of the moduli space will correspond to the smallest stabilizer. All these conditions will be easily readable in the twisted cohomology we introduce now.

\begin{definition}
Let $W$ be a finite CW-complex with a 0-cell as base point. Denote by $\tilde{W}$ the universal covering of $W$ and by $A$ the ring $\Z[\pi_1(W)]$.
Then, the cellular complex $C_*(\tilde{W},\Z)$ is naturally a left A-module where $\pi_1(W)$ acts by deck transformations. 

Given any left $A$-module $E$, we define
\begin{equation*}\begin{split}
 C^*(W,E)=\Hom_A(C_*(\tilde{W}),E)\text{ and }H^*(W,E)=H^*\left(C^*(W,E)\right)\\
 C_*(W,E)=C_*(\tilde{W})\otimes_A E\text{ and }H_*(W,E)=H_*\left(C_*(W,E)\right)
\end{split}\end{equation*}
\end{definition}

\subsection{Two examples}

Let us look at the case $W=S^1$.  Its universal cover is $\R$ with $\Z$ acting by translations. We define $\te_0=\{0\}$ and $\te_1=[0,1]$. These cells project respectively on the 0-cell and the 1-cell of $S^1$. As $A$-modules we have
$C_0(\tilde{S^1})=A.\te_0$ and $C_1(\tilde{S^1})=A.\te_1$. Identifying $A$ with $\Z[t^{\pm 1}]$, we compute that $\partial \te_1=(t-1)\te_0$.

An A-module is nothing more than an abelian group $E$ with an automorphism $\phi$ corresponding to the action of $t$. The twisted (co)homology of the circle is then computed from the following complexes:
$$\xymatrix{
C^0(S^1,E)\simeq E \ar[r]^{\dd}& C^1(S^1,E)\simeq E \\
C_0(S^1,E)\simeq E & C_1(S^1,E)\simeq E\ar[l]_{\partial}
}$$
where $\partial v=\phi(v)-v$ and $\dd$ is obtained from the formula $\dd\lambda=(-1)^{|\lambda |+1} \lambda\circ\partial$.
We deduce from it the isomorphisms $H^0(S^1,E)=H_1(S^1,E)=\ker(\id-\phi)$ and $H^1(S^1,E)=H_0(S^1,E)=\coker(\id-\phi)$.

As a first application, consider $\boM(S^1)$, that is the set of conjugacy classes in $\su$.
Pick $g\in \su$ not central, and consider the map $c_g:\su\to \su$ defined by $c_g(h)=hgh^{-1}$.

We identify once for all the tangent space of $\su$ at $g$ to $\lu$ via the map which associates to a path $g_t$ such that $g_0=g$ the derivative $\frac{\dd}{\dd t}|_{t=0}g_tg_0^{-1}$.

Using this identification, we compute $D_1 c_g(\xi)=\frac{\dd}{\dd t}|_{t=0} e^{t\xi}ge^{-t\xi}g^{-1}=\xi-\Ad_g\xi$.
From this computation, we see that the tangent space of the $\su$-orbit through $g$ is the image of the map $\xi-\Ad_g\xi$. Hence, the tangent space of $\boM(S^1)$ at $[g]$ is the cokernel of this map. We can interpret it as $H^1(S^1,\Ad_g)$ where $\Ad_g$ is a notation for the vector space $\lu$ with automorphism $\Ad_g$.

This point is indeed very general. Before attacking the general case, let us look at the case of $W=S^1\vee\cdots\vee S^1$ a pointed union of $n$ circles. Then, its fundamental group is free, say $\pi_1(W)=\langle t_1,\ldots,t_n\rangle$ and $\boR(W)=\su^n$.

We have that $\boM(W)=\su^n/\so$ and the action is free on irreducible representations. 

Hence for an irreducible representation $\rho$, one has the following identification : $T_{[\rho]}\boM(W)=\coker D_1c_{\rho}$ where $c_{\rho}(h)=(h\rho_1h^{-1},\ldots,h\rho_n h^{-1})$ and $\rho_i=\rho(t_i)$.
We compute as before $D_1c_{\rho}(\xi)=(\xi-\Ad_{\rho_1}\xi,\ldots,\xi-\Ad_{\rho_n}\xi)$.

On the other hand, $\tilde{W}$ is a regular $2n$-valent tree, and one can chose $\te_0$ as a lift of the base points and oriented edges $\te_1^1,\ldots,\te_1^n$ starting from $\te_0$ and representing $t_1,\ldots,t_n$. As before, we have $\partial \te_1^i=(t_i-1)\te_0$. We define again the $A$-module $\Ad_{\rho}$ as $\lu$ with $t_i$ acting as $\Ad_{\rho_i}$
$$C^0(W,\Ad_{\rho})=\lu, C^1(W,\Ad_{\rho})=\lu^n\text{ and }\dd \xi= D_1 c_{\rho}(\xi).$$

At irreducible representations, we compute $H^0(W,\Ad \rho)=\ker \dd^0=\{\xi\in \lu, \Ad_{\rho_i} \xi=\xi\}=\{0\}$ as $\Ad\rho$ is irreducible as a representation of $\pi_1(W)$ in $\lu$.
Hence, as before we have an identification $T_{[\rho]}\boM(W)=H^1(W,\Ad_{\rho})$.

\subsection{The general case}
Suppose $W$ is a 2-dimensional CW-complex of the following form: 
\begin{enumerate}
\item $1$ 0-cell lifted to $\te_0$
\item $n$ 1-cells lifted to oriented edges $\te_1^i$ for $i\le n$ starting at $\te_0$.
\item $m$ 2-cells lifted to polygons $\te_2^j$ for $j\le m$ with a base point at $\te_0$.
\end{enumerate}
For each 2-cell $\te_2^j$, one can read starting at $\te_0$ a word in generators $t_i$ represented by the 1-cells. Denoting by $R_j$ these words, we get a presentation of $\pi_1(W)$ given by
$$\pi_1(W)=\langle t_1,\ldots,t_n| R_1,\ldots R_m\rangle.$$
Let $R:\su^n\to\su^m$ be the map defined for a n-tuple $\rho=(\rho_1,\ldots,\rho_n)$ by $R(\rho_1,\ldots,\rho_n)=(R_1(\rho),\ldots,R_m(\rho))$. The space $\boR(W)=R^{-1}(1,\ldots,1)$ is smooth at $\rho$ if $R$ is a submersion at $\rho$. Let us compute the differential of $R$ by supposing $m=1$. We write $R(\rho)=\rho_{i_1}^{\epsilon_1}\cdots \rho_{i_k}^{\epsilon_k}$ for $i_l\in\{1,\ldots,n\}$ and $\epsilon_l=\pm 1$.

\begin{eqnarray*}\label{derivee}
D_{\rho} R(\xi_i)&=&\frac{\dd}{\dd t}|_{t=0} (e^{t\xi_1}\rho_{i_1})^{\epsilon_1}\cdots (e^{t\xi_k}\rho_{i_k})^{\epsilon_k}\\
&=&\sum_l \left\{
\begin{array}{c}
\rho_{i_1}^{\epsilon_1}\cdots \rho_{i_{l-1}}^{\epsilon_{l-1}}\xi_{i_l}\rho_{i_l}^{\epsilon_l}\cdots \rho_{i_k}^{\epsilon_k}\quad\text{ if }\epsilon_{i_l}=1\\
-\rho_{i_1}^{\epsilon_1}\cdots \rho_{i_{l}}^{\epsilon_{l}}\xi_{i_l}\rho_{i_{l+1}}^{\epsilon_{l+1}}\cdots \rho_{i_k}^{\epsilon_k}\quad\text{ if }\epsilon_{i_l}=-1
\end{array}
\right.
\end{eqnarray*}
On the other side, $\partial \te_2$ is the sum over $l$ of either $t_{i_1}^{\epsilon_1}\cdots t_{i_{l-1}}^{\epsilon_{l-1}} \te_1^{i_l}$ if $\epsilon_l=1$ or $-t_{i_1}^{\epsilon_1}\cdots t_{i_{l}}^{\epsilon_{l}} \te_1^{i_l}$ if $\epsilon_l=-1$.
Taking the adjoint of this map to obtain a map from $C^1(W,\Ad \rho)$ to $C^2(W,\Ad \rho)$, we get exactly the same expression as in \ref{derivee}.
In conclusion, the following diagram commutes:
$$\xymatrix{
T_{\rho} R(W)\ar[r]^{D_{\rho}R}\ar[d]^{\sim}& \lu^m\ar[d]^{\sim}\\
C^1(W,\Ad_{\rho})\ar[r]^{\dd^1}& C^2(W,\Ad_{\rho})
}$$
The map $R$ is a submersion at $\rho$ if and only if $\dd^1$ is surjective, or equivalently if $H^2(W,\Ad_{\rho})=0$. The argument on the pointed union of circles repeats exactly and shows that the action of $\su$ by conjugation at $\rho$ is locally free if $\dd^0$ is injective which amounts to say that $H^0(W,\Ad_{\rho})=0$. If both conditions are satisfied, then the quotient $\boM(W)$ is a manifold at $[\rho]$ and the tangent space identifies to $\ker \dd^1/\im \dd^0=H^1(W,\Ad_{\rho})$.

\subsection{Applications}
The interest of the language of (co)-homology is to use its tools, namely exact sequences, Poincar\'e duality and universal coefficients.

First, we can define relative (co)-homology of pairs as in the untwisted case and it fits into a long exact sequence as usual. In what concerns Poincar\'e duality, we have the following generalization: given a compact and oriented $n$-manifold with boundary and a $A$-module $E$, the cap product with the fundamental class $[M]$ gives an isomorphism
$$H^k(M,E)\simeq H_{n-k}(M,\partial M; E)$$
In what concerns universal coefficients, let $W$ be a finite CW-complex, $R$ be a principal ring and $E$ a $R[\pi_1(W)]$-module which is free as a $R$-module. Then there is an exact sequence:

$$0\to \text{Ext}(H_{k-1}(W,E),R)\to H^k(W,E^*)\to H_k(W,E)^*\to 0.$$
The proof is the usual one applied to the free complex $C_*(W,E)$ whose dual identifies to $C^*(W,E^*)$ thanks to our hypotheses.
We will use very often the well-known fact that the Euler characteristic of a complex and of its homology are the same. We deduce from it that the Euler characteristic of $H^*(W,\Ad\rho)$ is $3\chi(W)$ because the twisted complex is obtained from the standard one by tensoring by $\lu$ which has dimension 3.
\subsubsection{Surfaces}
Let $\Sigma$ be a closed surface and $\rho\in \boR(\Sigma)$ be an irreducible representation. The $\R[\pi_1(\Sigma)]$-module $\Ad_{\rho}$ is free over $\R$ of dimension 3 and its dual identifies to itself thanks to the invariant Killing form $\langle A,B\rangle=\tr A\ba{B}^T$.
We deduce that $H^2(\Sigma,\Ad_{\rho})\simeq H_0(\Sigma,\Ad_{\rho})\simeq H^0(\Sigma,\Ad_{\rho}^*)^*\simeq H^0(\Sigma,\Ad_{\rho})^*=0$. 
Hence irreducible representations are smooth points of $\boM(\Sigma,\su)$.
Written differently, Poincar\'e duality states that the the following pairing is non degenerate.

$$\xymatrix{
H^1(\Sigma,\Ad_{\rho})\times H^1(\Sigma,\Ad_{\rho})\ar[r]^-{\cup}&
H^2(\Sigma,\Ad_{\rho}\otimes\Ad_{\rho})\ar[r]^-{\langle,\rangle}&
H^2(\Sigma,\R)\ar[r]^-{\int}&
\R}$$
This gives a non-degenerate 2-form $\omega$ on the irreducible part of $\boM(\Sigma)$. We will show later that it is a closed form, and hence that $\boM(\Sigma)$ is symplectic.

\subsubsection{The torus case}\label{torusdiff}
This case is not covered by the previous one because all representations $\rho$ on a torus are abelian. Before computing the corresponding cohomology group, recall that $\boM(S^1\times S^1)$ is covered by the map sending a pair $(\phi,\psi)$ to the representation $\rho_{\phi,\psi}$ sending the first generator to $\rho_{\phi}=\begin{pmatrix}e^{i\phi}&0\\ 0&e^{-i\phi}\end{pmatrix}=e^{{\bf i}\phi}$ and the second one to $\rho_{\psi}=\begin{pmatrix}e^{i\psi}&0\\ 0&e^{-i\psi}\end{pmatrix}=e^{{\bf i}\psi}$.

Then $H^0(S^1\times S^1,\Ad_{\rho})\simeq H^2(S^1\times S^1,\Ad_{\rho})^*\simeq \ker (\id -\rho_{\phi})\cap\ker(\id-\rho_{\psi})$.
In the quaternionic basis $\bf{i},\bf{j},\bf{k}$, $\Ad_{\rho_{\phi}}=\begin{pmatrix} 1&0&0\\
0& \cos(2\phi)& -\sin(2\phi)\\ 0& \sin(2\phi)& \cos(2\phi)\end{pmatrix}$.
The subspace fixed by this matrix is generated by $\bf{i}$ provided that $2\phi\notin 2\pi\Z$.

We deduce from it that the rank of $H^2(S^1\times S^1,\Ad_{\rho})$ is constant equal to 1 if $\phi$ or $\psi$ is not in $\pi\Z$, or equivalently if $\rho_{\phi,\psi}$ is not central.
One can apply the constant rank theorem to state that $\boM(S^1\times S^1)$ is actually a manifold at all non central representations. A computation shows that the pull back of $\omega$ in the coordinates $\phi,\psi$ is $\dd \phi\wedge \dd\psi$.

\subsubsection{3-manifolds with boundary}

Let $M$ be a 3-manifold with boundary, and $\rho\in \boR(M)$.  The following exact diagram is a piece of the sequence of the pair $(M,\partial M)$, where vertical isomorphisms are given by Poincar\'e duality.

$$\xymatrix{
H^1(M,\Ad_{\rho})\ar[r]^-{\alpha}\ar[d]^{\sim}&H^1(\partial M,\Ad_{\rho})\ar[r]^-{\beta}\ar[d]^{\sim}&H^2(M,\partial M;\Ad_{\rho})\ar[d]^{\sim}\\
H^2(M,\partial M;\Ad_{\rho})^*\ar[r]^-{\beta^*}&H^1(\partial M,\Ad_{\rho})^*\ar[r]^-{\alpha^*}&H^1(M,\Ad_{\rho})^*
}$$
We read from this diagram that $\rk \beta=\rk\alpha^*$. Standard linear algebra says that $\rk\alpha^*=\rk\alpha$ and the exactness of the first line gives $\rk\alpha=\dim\ker\beta$.

One deduce from it that $\dim H^1(\partial M,\Ad_{\rho})=\rk\beta+\dim\ker\beta=2\rk\alpha$ whereas $\rk H^1(M,\Ad_{\rho})=\rk \alpha+\dim\ker\alpha=\frac{1}{2}\dim H^1(\partial M,\Ad_{\rho})+\dim\ker \alpha$.

It implies that the three following properties are equivalent:
\begin{enumerate}
\item The map $H^1(M,\Ad_{\rho})\to H^1(\partial M,\Ad_{\rho})$ is injective.
\item The map $H^1(M,\partial M;\Ad_{\rho})\to H^1(M,\Ad_{\rho})$ vanishes.
\item $\dim H^1(M,\Ad_{\rho})=\frac{1}{2}H^1(\partial M;\Ad_{\rho})$.
\end{enumerate}
We deduce from these computations the following result.

Let $\rho\in \boR(M)$ a representation. We will call it {\it regular} if it is irreducible and satisfies the equivalent properties above.
\begin{theorem}
 Regular representations are smooth points of $\boR(M)$ and 
the restriction map $\boM(M)\to \boM(\partial M)$ is a Lagrangian immersion when restricted to regular representations and corestricted to irreducible (non central in the torus case) representations.
\end{theorem}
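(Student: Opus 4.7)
The plan is to identify the tangent data of the relevant spaces via twisted cohomology and then read off the three pieces of the conclusion (smoothness, immersion, Lagrangian) from the long exact sequence of the pair $(M,\partial M)$ together with Poincar\'e--Lefschetz duality. By the formalism of the previous subsection, $\rho$ is a smooth point of $\boR(M)$ as soon as $H^2(M,\Ad_\rho)=0$, and irreducibility gives $H^0(M,\Ad_\rho)=0$, making the $\su$-action locally free; then $\boM(M)$ is smooth at $[\rho]$ with tangent space $H^1(M,\Ad_\rho)$. Poincar\'e--Lefschetz duality, combined with self-duality of $\Ad_\rho$ under the Killing form and universal coefficients over $\R$, yields $H^2(M,\Ad_\rho)\simeq H^1(M,\partial M;\Ad_\rho)^*$; the long exact sequence, with $H^0(M,\Ad_\rho)=0$ and condition~(2) of regularity (vanishing of $H^1(M,\partial M;\Ad_\rho)\to H^1(M,\Ad_\rho)$), identifies this space with $H^0(\partial M,\Ad_\rho)^*$, which is zero when $\rho|_{\partial M}$ is irreducible. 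When $\partial M$ has a torus component on which $\rho$ is only non-central, $H^2(M,\Ad_\rho)$ is genuinely nonzero; one then supplements with the antisymmetry of the Lie bracket, which kills the cup-product obstruction $[\xi,\xi]\in H^2(M,\Ad_\rho)$, so first-order deformations remain unobstructed and $\boR(M)$ is still smooth at $\rho$.

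Under these identifications, the differential of the restriction map $\boM(M)\to\boM(\partial M)$ at $[\rho]$ is the map $\alpha\colon H^1(M,\Ad_\rho)\to H^1(\partial M,\Ad_\rho)$ appearing in the pair sequence. Condition~(1) of regularity is precisely injectivity of $\alpha$, so the restriction is an immersion; condition~(3) says $\dim\im(\alpha)=\tfrac12\dim H^1(\partial M,\Ad_\rho)$, that is, $\im(\alpha)$ has the correct Lagrangian half-dimension.

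The remaining content is isotropy of $\im(\alpha)$ with respect to the symplectic form $\omega$ on $\boM(\partial M)$. By construction $\omega$ is cup product followed by the Killing form and evaluation on $[\partial M]$. Naturality of cup product gives $\alpha(\xi)\cup\alpha(\eta)=j^*(\xi\cup\eta)$ for $\xi,\eta\in H^1(M,\Ad_\rho)$ and $j\colon\partial M\hookrightarrow M$, hence
\begin{equation*}
\omega(\alpha\xi,\alpha\eta)=\langle[\partial M],j^*(\xi\cup\eta)\rangle=0
\end{equation*}
because $[\partial M]$ bounds in $M$; equivalently, evaluation on $[\partial M]$ annihilates the image of $j^*$ by exactness of $H^2(M,\R)\xrightarrow{j^*}H^2(\partial M,\R)\to H^3(M,\partial M;\R)$. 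The main obstacle is the smoothness step in the torus-boundary case: here the naive submersion criterion fails and one has to argue directly that the higher integrability obstructions all vanish, whereas injectivity of $\alpha$, dimension count and isotropy are essentially formal consequences of the long exact sequence and Poincar\'e duality once the identifications are in place.
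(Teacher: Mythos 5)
Your treatment of the immersion and isotropy parts is correct, and your isotropy argument (naturality of cup product plus the fact that $[\partial M]$ bounds in $M$, so that $\int_{\partial M}\circ\, j^* = 0$) is actually cleaner than the paper's, which deduces isotropy from the equality $\im\beta_* = (\ker\beta)^\perp = (\im\alpha)^\perp$ in the Poincar\'e--Lefschetz diagram. Your dimension count leading to $H^2(M,\Ad_\rho)\simeq H^0(\partial M,\Ad_\rho)^*$ is also fine.

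The genuine gap is in your smoothness argument when $\partial M$ has a torus component. You correctly observe that then $H^2(M,\Ad_\rho)\neq 0$ so the naive submersion criterion fails, but the supplementary argument you offer is wrong: the pairing $H^1\times H^1\to H^2$ defined by cup product followed by the Lie bracket is \emph{symmetric}, not antisymmetric. Indeed for degree-one classes $a\cup b = -\tau(b\cup a)$ in $H^2(M,\Ad_\rho\otimes\Ad_\rho)$, and applying the (antisymmetric) bracket to both sides flips the sign again, so $[a\cup b]=[b\cup a]$. Hence the quadratic obstruction $[\xi\cup\xi]$ does not vanish for formal sign reasons. Moreover, even if the quadratic obstruction were zero, that alone would not give smoothness of $\boR(M)$ at $\rho$ without further input (a statement that the local analytic germ is cut out by the quadratic cone, which is not automatic here). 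The paper's argument avoids this entirely: it does not need $H^2=0$, but instead shows that regularity (equivalently $\ker\alpha = 0$, via the identity $\dim H^1(M,\Ad_\rho)=\tfrac12\dim H^1(\partial M,\Ad_\rho)+\dim\ker\alpha$) is exactly the condition that $\dim H^2(M,\Ad_\rho)$ is locally minimal, using that $\dim H^1(\partial M,\Ad_\rho)$ is locally constant near a smooth boundary point and that Euler characteristic ties $\dim H^1(M)$ to $\dim H^2(M)$. Minimal $\dim H^2$ means maximal, hence locally constant, rank of the defining equations, and the constant rank theorem then gives smoothness --- uniformly, whether or not the boundary is a torus. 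You should replace your deformation-theoretic sketch for the torus case with this constant-rank argument.
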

\begin{proof}

Let $\rho$ be a regular representation in $\boR(M)$, and consider a CW-complex $W$ on which $M$ retracts. Then, the differential $\dd^1$ in the complex $C^*(W,\Ad_{\rho})$ is the derivative at $\rho$ of equations defining $\boR(M)$. The corank of this derivative is equal to the dimension of $H^2(W,\Ad_{\rho})=H^2(M,\Ad_{\rho})$ : we will see that the conditions for a representation to be regular are equivalent to the condition that $H^2(M,\Ad_{\rho})$ is as small as possible, and hence will ensure that $\boR(M)$ is smooth at $\rho$.

As $\rho$ is irreducible, $H^0(M,\Ad_{\rho})=0$ and the computation of Euler characteristic gives $\chi(M)\dim \lu=-\dim H^1(M,\Ad_{\rho})+\dim H^2(M,\Ad_{\rho})$. We deduce from that formula that $H^2(M,\Ad_{\rho})$ is as small as possible if and only if the same is true for $H^1(M,\Ad_{\rho})$.
We have proved that $\dim H^1(M,\Ad_{\rho})=\frac{1}{2}\dim H^1(\partial M)+\dim \ker \alpha$ where $\alpha$ is the natural map from $H^1(M,\Ad_{\rho})$ to $H^1(\partial M,\Ad_{\rho})$.

By assumption, the restriction of $\rho$ to the boundary is in the smooth part of $\boR(M)$, hence the dimension of $H^1(\partial M,\Ad_{\rho})$ do not change. The condition of regularity is $\dim \ker \alpha=0$ and hence is equivalent to the fact that the dimension of $H^1(M,\Ad_{\rho})$ is as small as possible. We proved the first part of our assumption.
We also see that the equation $\dim\ker \alpha=0$ implies that $\dim H^1(M,\Ad_{\rho})=\frac{1}{2}\dim H^1(\partial M, \Ad_{\rho})$. Moreover, $\alpha$ is the derivative at $[\rho]$ of the restriction map $r:\boM(M)\to \boM(\partial M)$ which becomes an immersion.
Finally, let us show that the image of $\alpha$ is isotropic by looking at the diagram above.
Let $u,v\in H^1(M,\Ad_{\rho})$. Then, one may show that $\omega(\alpha(u),\alpha(v))=0$. But, this can be written as $\langle \beta_*(u^{\#}), \alpha(v)\rangle$ where $u^{\#}$ is Poincar\'e dual to $u$ and $\langle\cdot,\cdot\rangle$ is the duality pairing. But recall that $\im \beta_*=(\ker \beta)^{\perp}=(\im\alpha)^{\perp}$ this implies precisely that $\omega(\alpha(u),\alpha(\beta))$ vanishes. Hence, the image of $\alpha$ is Lagrangian and the theorem is proved.
\end{proof}

Let us look at the following example. As seen in Section \ref{noeuds}, the moduli space of the trefoil knot is a segment (abelian representations) with another segment attached to it. The restriction map sends this space to the representation space of the torus that we represent as a union of two stacked up squares (known as the pillow case). The image of this map is represented on the left of Figure \ref{embed}. The restriction map is an immersion for all regular points but is not injective. The case of the figure eight knot is presented on the right. In that case, the moduli space is the union of a segment (abelian representations) and a circle (irreducible representations). The restriction map is an immersion at all regular points and the map fails to be injective at one point.
\begin{figure}[h]\label{embed}
\begin{center}
\includegraphics[width=8cm]{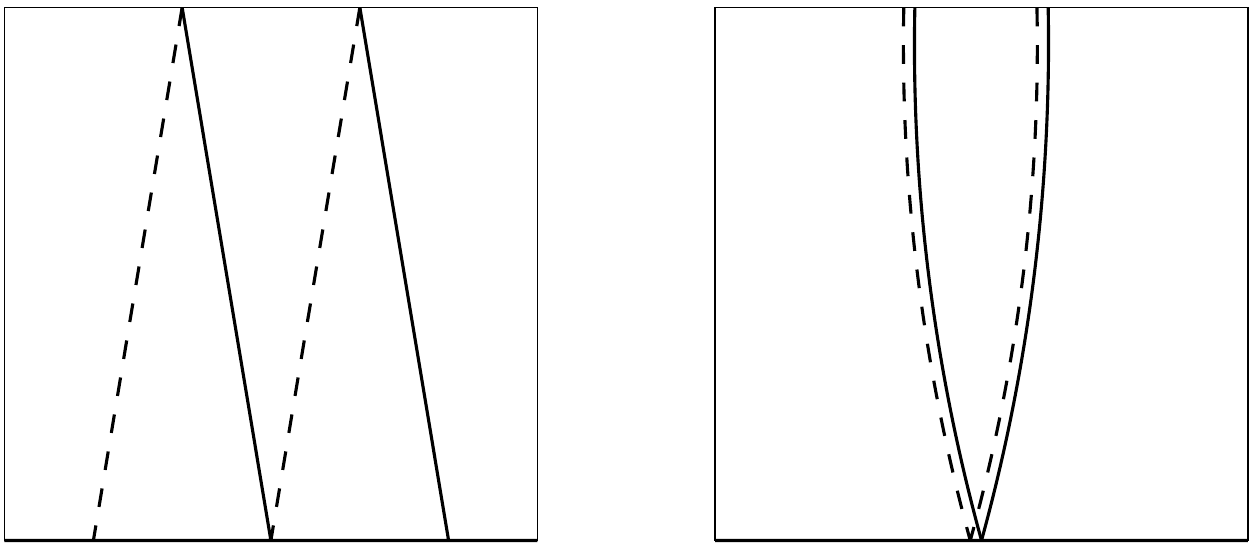}
\caption{Moduli spaces of 3.1 and 4.1 restricted to the boundary}
\end{center}
\end{figure}

\subsection{Reidemeister torsion}
Given a real vector space $V$ of dimension $n$, we can form the line $\det V=\Lambda^n V$. We will consider this line as even if $n$ is even and odd if $n$ is odd. This convention will play a role with the implicit use of the isomorphism $V\otimes W\to W\otimes V$ sending $v\otimes w\to  (-1)^{|v||w|}w\otimes v$ where $|v|$ and $|w|$ are the degrees of $v$ and $w$ respectively. With that convention, one can write $\det(V\oplus W)=\det(V)\otimes \det(W)$ as this identification depends on the order of $V$ and $W$ up to a sign prescribed by the convention.

The following considerations will rely on the fact that for any short exact sequence
$0\to U\to V\to W\to 0$ there is a canonical isomorphism $\det V=\det U\otimes \det W$. 
This isomorphism is defined by sending $u_1\wedge\cdots \wedge u_i\otimes w_1\wedge\cdots \wedge w_j$ to $u_1\wedge\cdots \wedge u_i\wedge\tilde{w}_1\wedge\cdots \wedge\tilde{w}_j$ where $\tilde{w}$ is any lift of $w$ in $V$.

Given a finite complex of finite dimensional real vector spaces $C^*$, we define $\det C^*=\det C^0\otimes (\det C^1)^{-1}\otimes \cdots \otimes (\det C_k)^{(-1)^k}$. We define the determinant of its cohomology by the same formula.
\begin{lemma}\label{lem-torsion}
There is a canonical isomorphism (which involves the sign rule)
$$\det C^*=\det H^*.$$
\end{lemma}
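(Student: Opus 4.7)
The natural approach is to split the complex at each level using the differential and then telescope. For each $i$, set $Z^i = \ker \dd^i \subset C^i$ and $B^i = \im \dd^{i-1}$, so that $H^i = Z^i/B^i$. The differential $\dd^i$ induces an isomorphism $C^i/Z^i \xrightarrow{\sim} B^{i+1}$, which gives two short exact sequences
\begin{equation*}
0 \to Z^i \to C^i \to B^{i+1} \to 0, \qquad 0 \to B^i \to Z^i \to H^i \to 0.
\end{equation*}
Applying the canonical isomorphism $\det V = \det U \otimes \det W$ attached to each short exact sequence (which is exactly the tool the preceding paragraph has introduced) yields
\begin{equation*}
\det C^i \;=\; \det Z^i \otimes \det B^{i+1} \;=\; \det B^i \otimes \det H^i \otimes \det B^{i+1}.
\end{equation*}

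Now I would form the alternating tensor product. Using the graded sign convention, $\det C^* = \bigotimes_i (\det C^i)^{(-1)^i}$, and plugging in the previous identity gives a product of three families of factors: the $\det H^i$ assemble into $\det H^*$, while the $\det B^{i+1}$ coming out of level $i$ (with exponent $(-1)^i$) cancels the $\det B^{i+1}$ coming out of level $i+1$ (with exponent $(-1)^{i+1}$). Finiteness of the complex ensures there are no boundary terms left over: $B^0 = 0$ and $B^{k+1}=0$ for the top degree $k$.

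The core step is really just the telescoping cancellation; the delicate point — and what I would flag as the main obstacle — is to check that the signs prescribed by the convention $v\otimes w \mapsto (-1)^{|v||w|}w\otimes v$ are compatible with the way the factors are reordered so that the $\det B^j$ contributions actually cancel (and are not merely inverse to each other up to a sign). This amounts to verifying that, for each short exact sequence $0 \to U \to V \to W \to 0$, the two iterated applications of the canonical isomorphism used at levels $i$ and $i+1$ produce the inverse isomorphisms on $\det B^{i+1}$; here one uses that the parity of $B^{i+1}$ enters both times with the same weight, so that the signs arising from commuting $\det H^i$ or $\det Z^i$ past $\det B^{i+1}$ agree on the two sides. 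Once this naturality check is made, the displayed identity $\det C^* = \det H^*$ follows, and the isomorphism is manifestly canonical because at each stage we invoked only the canonical isomorphism of a short exact sequence.
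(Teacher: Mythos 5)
Your proof is correct and follows essentially the same route as the paper: decompose each $\det C^i$ via the two short exact sequences $0 \to Z^i \to C^i \to B^{i+1} \to 0$ and $0 \to B^i \to Z^i \to H^i \to 0$, then telescope the boundary determinants in the alternating tensor product. You are also right to single out the sign-compatibility check as the delicate point; the paper asserts the cancellation ``with opposite signs'' without spelling out that verification.
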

\begin{proof}
Define for all $i\in \Z$, $Z_i=\ker \dd^i$ and $B^i=\im \dd^i$. The exact sequence
$0\to Z^i\to C^i\to B^i\to 0$ gives $\det C^i=\det Z^i\otimes \det B^i$ and the exact sequence
$0\to B^{i-1}\to Z^i\to H^i\to 0$ gives $\det Z^i=\det B^{i-1}\otimes \det H^i$.
Removing the $\otimes$ sign, we compute:
\begin{eqnarray*}
\det C^0(\det C^1)^{-1}\det C^2\cdots=\det Z^0 \det B^0 (\det Z^1\det B^1)^{-1}\det Z^2\det B^2\cdots\\
=\det B^{-1} \det H^0 \det B^0(\det B^0\det H^1\det B^1)^{-1}\det B^1\det H^2\det B^2\cdots
\end{eqnarray*}
One see that all factors $\det B^{i}$ appear twice with opposite signs and then cancel, proving the proposition.
\end{proof}

Let $W$ be a finite CW-complex and $\rho$ be a representation in $\boR(W)$. For all cells, we choose an orientation and a lift to the universal covering $\tilde{W}$. For each $k$, number the lifted $k$-cells as $\te_k^{1},\ldots,\te_k^{n_k}$.

Recall that the twisted cochain complex of dimension $k$ is $C^k(W,\Ad_{\rho})=\bigoplus_{i\le n_k} \te_k^i \lu$. Choose in $\Lambda^3\lu$ the $\su$-invariant volume element $\lambda={\bf i}\wedge {\bf j}\wedge {\bf k}$. We define again $\lambda_k=\bigwedge_{i\le n_k} \te_k^i\lambda$, and $\Lambda=\lambda_0\otimes \lambda_1^{-1}\otimes \cdots\in \det C^*(W,\Ad_{\rho})$.

With the isomorphism provided by the lemma, it gives us an element in $\det H^*(W,\Ad_{\rho})$ that we denote by the same letter. One can check easily that this element does not depend on the choice of lifting of the cell because $\su$ fixes $\lambda$, but changing the orientation of a cell or their numbering do change $\Lambda$ up to a sign.
To remove this ambiguity, we do the same construction for the complex $C^*(W,\R)$ by taking the same cells with the same orientation and order, and replacing $\lambda$ with $1\in \det \R$.
Doing so, one gets an element $T\in \det C^*(W,\R)=\det H^*(W,\R)$. We call Reidemeister torsion at $\rho$ the quotient $W/T\in \det H^*(W,\Ad_{\rho}) \det H^*(W,\R)^{-1}$.

It remains to understand how this quotient changes when we change the cellular decomposition but one can show that it does not change under cellular subdivision and collapsing, see \cite{turaev}. This implies that the torsion only depends on $W$ up to simple homotopy.

Let us give some applications: in the case of a surface $\Sigma$ and irreducible $\rho\in \boR(\Sigma)$, one has $H^0(\Sigma,\R)=\R$, $H^2(\Sigma,\R)=\R$, $H^0(\Sigma,\Ad_{\rho})=H^2(\Sigma,\Ad_{\rho})=0$. By considering the standard generators of the determinant of these spaces, one see that the torsion reduces to an element of $\det H^1(W,\Ad_{\rho})(\det H^1(W,\R))^{-1}$. This element does not give us any information as one can show that it is equal to the Liouville volume form $\left(\frac{\omega_{\rho}^{3g-3}}{(3g-3)!}\right)^{-1}\left(\frac{\omega^g}{g!}\right)$ where $g$ is the genus of $\Sigma$ (supposed at least equal to 2) and $\omega, \omega_{\rho}$ are the symplectic forms on $H^1(\Sigma,\R)$ and $H^1(\Sigma,\Ad_{\rho})$ respectively.

 In the case of a regular representation in $\boR(M)$, we have 
 $H^0(M,\Ad_{\rho})=0$ and $H^2(M,\Ad_{\rho})=H^2(\partial M,\Ad_{\rho})$ as one can deduce from the commutative diagram below with exact lines:

$$\xymatrix{
H^2(M,\partial M)\ar[r]\ar[d]^{\sim}&H^2(M)\ar[r]\ar[d]^{\sim}&H^2(\partial M)\ar[r]&H^3(M,\partial M)=0\\
H^1(M)^*\ar[r]^-{0}&H^1(M,\partial M)^*&&
}$$
 Then, one has $H^2(\partial M)=H^0(\partial M)^*$. This is 0 if the genus of $\partial M$ is greater than 2, and in the case of a torus, it is one dimensional. One can choose a preferred generator of this space, showing that $\det H^*(M,\Ad_{\rho})\simeq H^1(M,\Ad_{\rho})^{-1}$.
 Choosing an element $T$ in $\det H^*(M,\R)$, one gets a well-defined element $\Lambda\in H^1(M,\Ad_{\rho})^{-1}$ which may be interpreted as a volume form on the regular part of $\boM(M)$.
 
\section{Gauge theory}

\subsection{Principal bundles and flat connections}
Let $M$ be a compact manifold of dimension at most 3. We will denote $\su$ by $G$ as it can be replaced by any Lie group in that section. The Lie algebra of $G$ will be denoted by $\g$.
\begin{definition}
A principal $G$-bundle over $M$ is a fiber bundle $\pi:P\to M$ with a right action of $G$ on $P$ such that $G$ acts freely and transitively on each fiber. Two such bundles are isomorphic if there is a $G$-equivariant bundle isomorphism lifting the identity of $M$.
\end{definition}

\begin{definition}
A principal bundle with flat structure $(P,\boF)$ is a principal bundle $\pi:P\to M$ and a foliation $\boF$ of $P$ which is $G$-equivariant and such that the restriction of $\pi$ to each leaf is a local diffeomorphism. Again two such pairs are isomorphic if the bundles are isomorphic and the foliations correspond through this isomorphism.
\end{definition}
Such a flat $G$-bundle is often described by covering $M$ with open sets $U_i$ on which we can find sections $s_i$ of $P$ whose image lie in the same leaf (we will say that these sections are flat). On the intersection of two such open sets $U_i$ and $U_j$, the two section differ by the action of a locally constant map $g_{i,j}:U_i\cap U_j\to G$. These data are sufficient to reconstruct the flat $G$-bundle up to isomorphism. 

Take a point $p$ in a flat $G$-bundle $P$ and write $x=\pi(p)$. Then, any path $\gamma:[0,1]\to M$ such that $\gamma(0)=p$ lifts uniquely to a path $\tilde{\gamma}\to P$ if one asks that $\tilde{\gamma}(0)=p$ and that $\tilde{\gamma}$ stays locally on the same leaf. If $\gamma(1)=x$, then $\tilde{\gamma}(1)=pg$. The assignement $\gamma\to g$ do not depend on the homotopy class of $\gamma$ and gives rise to an homomorphism $\hol_p:\pi_1(M,x)\to G$.

The conjugacy class of this representation do not depend on $p$ and $x$ moreover, we have the following fundamental result:

\begin{theorem}
The holonomy map gives a bijection between isomorphism classes of flat $G$-bundles and $\boM(M,G)$.
\end{theorem}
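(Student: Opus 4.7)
The plan is to check well-definedness of $\hol$, then prove surjectivity by an explicit associated-bundle construction, and finally injectivity by trivializing a flat bundle over the universal cover using its own flat structure.

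First I would verify that the conjugacy class $[\hol_p]$ is an invariant of $(P,\boF)$. If one replaces $p$ by $pg$, then since $\boF$ is $G$-equivariant the leaf through $pg$ is the right translate of the leaf through $p$; so the flat lift of a loop $\gamma$ from $pg$ is obtained from the one at $p$ by right translation, and comparing endpoints yields $\hol_{pg}(\gamma)=g^{-1}\hol_p(\gamma)g$. A bundle isomorphism sends flat sections to flat sections and hence flat lifts to flat lifts, so $\hol_{\phi(p)}=\hol_p$ for any isomorphism $\phi$. Finally, $\hol_p(\gamma)$ depends only on the homotopy class of $\gamma$ because a homotopy rel endpoints itself lifts to the foliation, a point I will address below. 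Together these give a well-defined map $\Phi$ from isomorphism classes of flat $G$-bundles to $\boM(M,G)$.

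For surjectivity, given $\rho\colon\pi_1(M,x_0)\to G$ I would form the associated bundle
$$P_\rho=(\tilde M\times G)/\pi_1(M,x_0),\qquad \gamma\cdot(\tilde x,g)=(\gamma\tilde x,\,\rho(\gamma)g),$$
with right $G$-action $(\tilde x,g)\cdot h=(\tilde x,gh)$. The horizontal foliation with leaves $\tilde M\times\{g\}$ is $\pi_1$-invariant and $G$-equivariant, hence descends to a foliation $\boF_\rho$ on $P_\rho$ transverse to fibers. Taking $p_0=[(\tilde x_0,1)]$ and lifting a loop $\gamma$ at $x_0$ to a path in $\tilde M$ from $\tilde x_0$ to $\gamma\tilde x_0$, the corresponding flat lift in $P_\rho$ ends at $[(\gamma\tilde x_0,1)]=[(\tilde x_0,\rho(\gamma))]=p_0\cdot\rho(\gamma)$, so $\hol_{p_0}=\rho$ on $P_\rho$.

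For injectivity, suppose $(P,\boF)$ has holonomy $\rho$ at a chosen $p_0$ over $x_0$. Viewing points of $\tilde M$ as homotopy classes of paths from $x_0$, I would define $\phi\colon\tilde M\to P$ by sending $\tilde x$ to the endpoint of the flat lift of a representative path, starting from $p_0$. Homotopy invariance makes this well defined, and the concatenation rule combined with $G$-equivariance of $\boF$ gives $\phi(\gamma\tilde x)=\phi(\tilde x)\cdot\rho(\gamma)$. Hence $(\tilde x,g)\mapsto\phi(\tilde x)\cdot g$ is invariant under the $\pi_1$-action defining $P_\rho$ and descends to a $G$-equivariant, foliation-preserving map $P_\rho\to P$ covering the identity on $M$; being fiber-preserving between principal $G$-bundles it is automatically an isomorphism. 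Changing the choice of $p_0$ conjugates $\rho$, so any flat bundle with holonomy in the orbit of $\rho$ is isomorphic to $P_\rho$, proving $\Phi$ is injective.

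I expect the main technical point to be the homotopy invariance of the endpoint of a flat lift, which is used both to define $\hol$ and to construct $\phi$. This is where the hypothesis that leaves project as local diffeomorphisms is essential: locally $(P,\boF)$ is isomorphic to $U\times G$ with leaves $U\times\{g\}$, so on a small patch a path has a unique horizontal lift for each starting $g$. Compactness of $[0,1]$ and $[0,1]^2$ allows patching these local lifts together along a path and along a homotopy respectively, and uniqueness of the horizontal lift with prescribed starting point then forces the two endpoints to coincide. Once this is granted, the remainder of the argument is routine bookkeeping with the action conventions.
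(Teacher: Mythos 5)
Your proof follows the same route as the paper: construct the candidate inverse as the associated flat bundle $P_\rho=(\tilde M\times G)/\pi_1$ with horizontal foliation, and verify the two constructions are mutually inverse; the paper states this construction and leaves the verification ("one can check that these constructions are reciprocal") to the reader, and you supply exactly those details, including the homotopy-invariance point. One computation carries a sign slip you should fix for internal consistency: with the relation $(m,g)\sim(\gamma m,\rho(\gamma)g)$, applying $\gamma^{-1}$ gives $[(\gamma\tilde x_0,1)]=[(\tilde x_0,\rho(\gamma)^{-1})]$, not $[(\tilde x_0,\rho(\gamma))]$ as you wrote; correspondingly, the equivariance of $\phi$ in the injectivity step must read $\phi(\gamma\tilde x)=\phi(\tilde x)\cdot\rho(\gamma)^{-1}$ for $(\tilde x,g)\mapsto\phi(\tilde x)g$ to descend. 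This is the standard deck-action vs.\ path-composition convention clash (the paper itself is not explicit about it) and is repaired by inverting the $\pi_1$-action on the $G$-factor, or by choosing the opposite composition convention in $\pi_1$; the structure and all the essential ideas of your argument are correct.
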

\begin{proof}
We construct the reverse map  in the following way. Let $x$ be a base point in $M$ and $\rho$ a representation of $\pi_1(M,x)$. Then, denote by $M\times_{\rho}G$ the quotient of $\tilde{M}\times G$ by the equivalence relation $(m,g)\sim (\gamma.m,\rho(\gamma)g)$ for all $m\in M,g\in G, \gamma \in \pi_1(M,x)$. The map $\pi(m,g)=m$ and the action $(m,g).h= (m,gh)$ give to $M\times_{\rho}G$ a $G$-bundle structure. The foliation $\boF$ is the quotient of the foliation of
$\tilde{M}\times G$ whose leaves are $\tilde{M}\times\{g\}$ for $g\in G$.
One can check that these constructions are reciprocal, which proves the theorem.
\end{proof}

\subsection{Sections and connection forms}
On manifolds $M$ of dimension at most 3 and for connected and simply connected groups $G$, all $G$-bundles on $M$ are trivial, that is isomorphic to $M\times G$. To prove this, it is sufficient to find a section $s$ of any $G$-bundle $\pi:P\to M$. The map $M\times G\to P$ sending $(m,g)$ to $s(m)g$ will be the desired isomorphism.

Let $W$ be a CW-complex homotopic to $M$ and let $\pi:P\to W$ be a $G$-bundle. Then, one can choose arbitrarily a section over the 0-skeleton of $W$. For each 1-cell,we extend the section of $P$ given at the ends, using the fact that the fiber (isomorphic to $G$) is connected. At the boundary of each 2-cell, there is some section chosen that we can extend along the cell as $G$ is simply connected. Finally, we deduce from the fact that $\pi_2(G)=0$ that the section also extends to the 3-cells and hence to $W$, which proves the assumption.

The existence of sections give us another practical viewpoint on flat $G$-bundles that we explain now. 

Let $(P,\boF)$ be a flat $G$-bundle. Given a section $s$ of $P$, one can encode the foliation $\boF$ with a 1-form $A$ on $M$ with values in $\g$. We define it in the following way: let $x$ be a point in $M$. Let $h$ be the map defined at a neighborhood of $x$ with values in $G$ such that the map $m\mapsto s(m)h(m)$ describes the leaf of $\boF$ passing at $s(x)$ as suggested in Figure \ref{principal}. Then, we set $A_x=-D_x h\in \g$.

\begin{figure}[h]\label{principal}
\begin{center}
\begin{pspicture}(-2,0)(6,5)
\includegraphics[width=8cm]{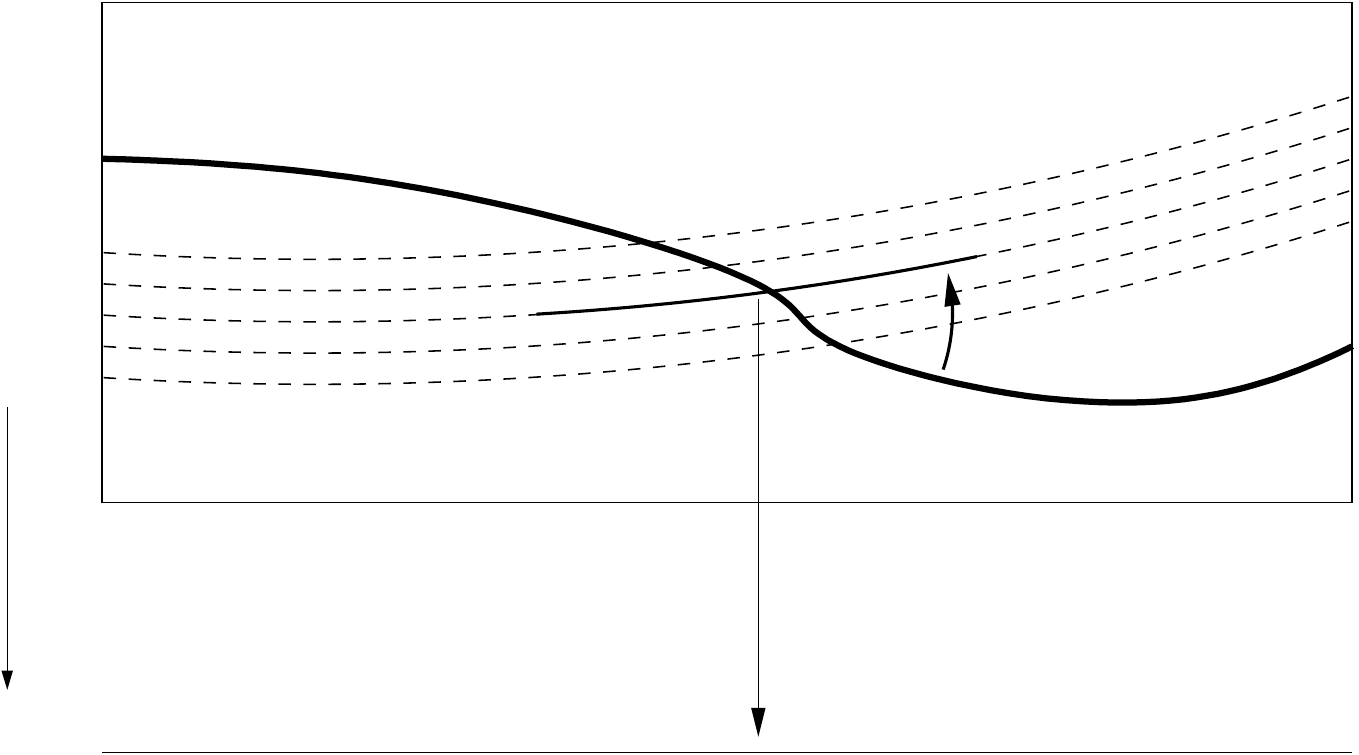}
\put(-8.5,1){$\pi$}
\put(-8.2,2.3){$P$}
\put(-8.2,-0.2){$M$}
\put(0.2,2.3){$s$}
\put(0.2,3.5){$\boF$}
\put(-3.7,-0.3){$x$}
\put(-2.2,2.2){$h$}
\end{pspicture}
\caption{From foliations to connection forms}
\end{center}
\end{figure}

Any other section $s'$ is obtained from $s$ by the right action of a map $g:M\to G$. We write for short $s'=s^g$.
If the leaf passing at $s(x)$ is the image of the map $sh$ for some map $h$ defined around $x$ with values in $G$, then by $G$-invariance, the image of the leaf passing at $s(x)g(x)$ is the image of the map $m\mapsto s(m)h(m)g(x)$. This map can be rewritten as $sgg^{-1}hg(x)$. The 1-form $A^g$ associated to the section $s^g$ is then the derivative at $x$ of $g^{-1}hg(x)$, that is $g^{-1}D_xhg-g^{-1}D_xg$. We then obtain $A^g=g^{-1}Ag+g^{-1}\dd g$.

Nevertheless, any section $s$ and 1-form $A\in\Omega^1(M,\g)$ do not necessarily define a flat structure but only a $G$-invariant distribution of subspaces in $TP$ transverse to the fibers of $\pi$. We would like to give a condition on $A$ for this to hold. In order to be tangent to a foliation, the distribution has to be integrable, that is to verify the Frobenius condition that if two vector fields belong to this distribution, their bracket also belongs to it. To verify this condition, consider two vector fields $X,Y$ on $M$.
The 1-form $A$ defines uniquely a 1-form $\tilde{A}\in \Omega^1(P,\g)$ whose kernel is the invariant distribution. This $1$-form is characterized by the equations $R_g^*\tilde{A}=g^{-1}\tilde{A}g$ and $s^*\tilde{A}=A$, where $R_g$ is the action of a fixed element $g$ on $P$.
The vector fields $X$ and $Y$ extend to unique "horizontal" vector fields $\tilde{X}$ and $\tilde{Y}$ on $P$ such that $\tilde{A}(\tilde{X})=\tilde{A}(\tilde{Y})=0$ and $\pi_{*} \tilde{X}=X,\pi_*\tilde{Y}=Y$. These vector fields belong to the distribution defined by $A$ and their bracket will belong to the distribution if and only if $\tilde{A}([\tilde{X},\tilde{Y}])=0$.
The equation $\dd\tilde{A}(\tilde{X},\tilde{Y})=\tilde{X}.\tilde{A}(\tilde{Y})-\tilde{Y}.\tilde{A}(\tilde{X})-\tilde{A}([\tilde{X},\tilde{Y}])$ give $\dd \tilde{A}(\tilde{X},\tilde{Y})=0$.

Let $\xi,\eta$ be two elements of $\g$ and $v_\xi, v_\eta$ be the vector fields coming from the infinitesimal action of $G$ on $P$, for instance $v_{\xi}(p)=\frac{\dd }{\dd t} pe^{t\xi}$. Then, by construction $\tilde{A}(v_{\xi})=\xi$ and hence $\dd \tilde{A} (v_{\xi},v_{\eta})=-[\tilde{A}(v_{\xi}),\tilde{A}(v_{\eta})]$. We deduce that the identity $\dd\tilde{A}+\frac{1}{2}[\tilde{A}\wedge\tilde{A}]=0$ is true when applied to pairs of horizontal (resp. vertical) vector fields. One check that this is again true for horizontal and vertical vector fields and hence, the identity $\dd\tilde{A}+\frac{1}{2}[\tilde{A}\wedge\tilde{A}]=0$ holds. Pulling it back by $s$, we get $\dd A+\frac{1}{2}[A\wedge A]=0$ which is the flatness equation for $A$. This is a necessary and sufficient condition for the distribution defined by $A$ to be integrable.

This considerations may be summarized in the following proposition:

\begin{proposition}
Let $M$ be a manifold and $G$ a Lie group such that all $G$-bundles on $M$ are trivial.
Then the set of isomorphism classes of flat $G$-bundles is isomorphic to the set of connections $A\in\Omega^1(M,\g)$ satisfying $\dd A+\frac{1}{2}[A\wedge A]=0$ up to the action of the gauge group given by $A^g=g^{-1}Ag+g^{-1}\dd g$.
\end{proposition}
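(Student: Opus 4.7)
The plan is to assemble the forward and backward maps from the computations already performed in the section and then check they are inverse. Because every $G$-bundle on $M$ is trivial, any flat bundle $(P,\boF)$ admits a global section $s$. The construction immediately preceding the statement attaches to this data a 1-form $A\in\Omega^1(M,\g)$, and the calculation with $\tilde A$, horizontal lifts and infinitesimal $G$-action shows that the Frobenius integrability of $\boF$ is equivalent to $\dd A+\tfrac12[A\wedge A]=0$. This already produces a well defined map from pairs \emph{(flat bundle, trivializing section)} to flat connections.

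To pass to isomorphism classes I would verify that changing $s$ to $s^g=sg$ for some map $g:M\to G$ changes $A$ into $A^g=g^{-1}Ag+g^{-1}\dd g$, which is exactly the formula derived in the excerpt. Any isomorphism of flat bundles, after trivializing both sides, is an automorphism of $M\times G$, hence again a map $g:M\to G$, so the gauge class $[A]$ depends only on the isomorphism class of $(P,\boF)$. For the inverse direction, start with $A$ satisfying the Maurer--Cartan equation on $M\times G$; extend it to $\tilde A\in\Omega^1(M\times G,\g)$ by the conditions $R_g^*\tilde A=g^{-1}\tilde A g$ and $s^*\tilde A=A$ for the canonical section $s(m)=(m,1)$, and define $\boF$ as the foliation integrating $\ker\tilde A$. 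Flatness of $A$ implies the Frobenius condition for $\ker\tilde A$ via the same three-case check (horizontal--horizontal, vertical--vertical, mixed), so the foliation exists and is $G$-equivariant by construction.

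Finally one checks that the two procedures are mutually inverse: applying the backward map after choosing the canonical section in the forward map returns the original $A$ tautologically, and applying the forward map to the foliation coming from a flat connection recovers that connection through the defining property $s^*\tilde A=A$. The main obstacle I anticipate is keeping the $G$-equivariance accounting straight when checking well-definedness under change of section and under bundle isomorphism, and in particular verifying that the mixed horizontal--vertical case of the Frobenius computation really holds throughout $P$ rather than only along $s(M)$. This comes down to carefully using that horizontal lifts of vector fields on $M$ are $G$-invariant; once that is set up, the rest is bookkeeping.
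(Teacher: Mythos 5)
Your proposal is correct and follows the paper's own route: the paper introduces this proposition with the phrase that it ``summarizes'' the preceding discussion, which is exactly the material you assemble --- the section-dependent 1-form $A$, the gauge formula $A^g=g^{-1}Ag+g^{-1}\dd g$ under $s\mapsto s^g$, and the Frobenius computation on $\tilde A$ showing that integrability of the distribution is equivalent to $\dd A+\tfrac12[A\wedge A]=0$. You make explicit the bijection check (both directions and mutual inverseness) that the paper leaves implicit, and you correctly flag the mixed horizontal--vertical Frobenius case, which the paper also notes but does not spell out.
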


\subsection{De Rham cohomology and isomorphisms}

Given a trivialized flat $G$-bundle $(P,s,A)$ where $s$ is a section of $P$ and $A$ is a flat connection, we define the twisted De Rham complex as the complex
$\Omega^*(M,\g)$ with the differential $\dd_A (\alpha)=\dd\alpha+[A\wedge\alpha]$. The flatness equation implies that $d_A^2=0$. We will denote by $H^*_A(M,\g)$ the cohomology of this complex. It is related to the twisted cohomology via the following De Rham theorem.

Let $M$ be a compact manifold homeomorphic to a cell complex $W$ and $(P,s,A)$ be a flat $G$-bundle. Fixing $p\in P$ over a base point $x$ in $M$ gives a holonomy representation $\rho\in \boR(\pi_1(M,x),G)$.

As the universal cover $\tilde M$ is contractible, the flat $G$-bundle induced on $\tilde M$ is trivial. Hence, there is a map $g:\tilde{M}\to G$ such that $\tilde{A}^g=0$ where $\tilde{A}$ is the connection form induced on $\tilde{M}$. Moreover, we can suppose that $g(x)=1$.
Let $\alpha\in \Omega^k(M,\g)$ a cocycle. Then the cocyle $I(\alpha)\in C^k(W,\Ad_{\rho})$ associates to a lifted k-cell $\te_k$ the integral $\int_{\te_k} g^{-1}\alpha g$.

\begin{theorem}\label{derham}
The map $I$ is a chain map which induces for all $k$ an isomorphism from $H^k_A(M,\g)$ to $H^k(M,\Ad_{\rho})$.
\end{theorem}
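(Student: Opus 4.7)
The strategy is to absorb the twist into the gauge map $g$ and reduce to the ordinary De Rham theorem on the universal cover $\tilde M$. I would proceed in three stages.

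First, verify that $I$ is a well-defined chain map. Since $\tilde A = \pi^* A$ is $\Gamma$-invariant (where $\Gamma = \pi_1(M,x)$) and is trivialized by $g$ via $\tilde A = -\dd g\cdot g^{-1}$, the map $\tilde m\mapsto g(\gamma\tilde m)$ is another trivialization of the same connection, so it must differ from $g$ by right-multiplication by a constant element of $G$. Evaluating at the base point and comparing with the definition of the holonomy $\rho$ given earlier in the text yields the transformation rule $g(\gamma\tilde m)=g(\tilde m)\rho(\gamma)$. Combined with the $\Gamma$-invariance of $\pi^*\alpha$, this gives an equivariance of the form $I(\alpha)(\gamma\te_k)=\Ad_{\rho(\gamma)^{-1}}I(\alpha)(\te_k)$, which is exactly the condition for $I(\alpha)\in\Hom_A(C_*(\tilde W),\lu)=C^k(W,\Ad_\rho)$ (up to sign conventions for the $A$-action). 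For the chain-map property, a direct Leibniz-rule computation using $\tilde A=-\dd g\cdot g^{-1}$ gives the pointwise identity
$$\dd(g^{-1}\,\pi^*\alpha\, g)=g^{-1}\,\pi^*(\dd_A\alpha)\, g,$$
and then Stokes' theorem on each oriented $(k+1)$-cell yields $(\delta I\alpha)(\te_{k+1})=I(\alpha)(\partial\te_{k+1})=\int_{\te_{k+1}}\dd(g^{-1}\pi^*\alpha\,g)=I(\dd_A\alpha)(\te_{k+1})$.

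Second, reinterpret both sides as $\Gamma$-invariants of untwisted complexes upstairs. The gauge identification $\alpha\mapsto g^{-1}\pi^*\alpha\, g$ realizes $(\Omega^*(M,\g),\dd_A)$ as the $\Gamma$-equivariant subcomplex of the untwisted complex $(\Omega^*(\tilde M,\g),\dd)$, where $\Gamma$ acts on the coefficient space $\g=\lu$ via $\Ad_\rho$ and on $\tilde M$ by deck transformations; analogously, $C^*(W,\Ad_\rho)=\Hom_\Gamma(C_*(\tilde W),\lu)$ is the $\Gamma$-invariant part of the untwisted cellular complex $C^*(\tilde W,\g)$. Under these identifications $I$ becomes nothing but the classical integration pairing between forms and cellular chains on $\tilde M$, tensored with the identity on $\g$, and the ordinary De Rham theorem for the smooth manifold $\tilde M$ provides a $\Gamma$-equivariant quasi-isomorphism between the two upstairs complexes.

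Third, descend to $\Gamma$-invariants. This is the genuine obstacle, since taking $\Gamma$-invariants is not an exact functor and a direct descent of the upstairs quasi-isomorphism is not formal. I would handle it via a standard Čech argument: choose an open cover $\{U_i\}$ of $M$ whose non-empty intersections are contractible and on which the flat bundle trivializes, pull back to a $\Gamma$-invariant cover of $\tilde M$ (together with the induced cellular subdivision of $\tilde W$), and form the corresponding Čech–De Rham and Čech–cellular double complexes for the two cohomology theories. On each non-empty intersection the flat bundle is trivial, so by the Poincaré lemma and cellular acyclicity both twisted complexes reduce to a single copy of $\g$ in degree zero. The two Čech double complexes therefore share isomorphic $E_1$-pages, their spectral sequences converge to $H^*_A(M,\g)$ and $H^*(W,\Ad_\rho)$ respectively, and $I$ realizes the comparison at the $E_1$-level, yielding the desired isomorphism on abutments.
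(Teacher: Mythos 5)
The paper states Theorem \ref{derham} without proof, so there is no argument in the text to compare yours against; I will evaluate it on its own merits.

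Your Stage 1 is sound. The identity $g(\gamma\tilde m)=g(\tilde m)\rho(\gamma)$ (up to the usual left/right and inverse conventions) follows exactly as you say from the uniqueness of trivializations of a flat connection on a connected simply connected space, and the equivariance $I(\alpha)(\gamma\te_k)=\Ad_{\rho(\gamma)}^{\pm 1}I(\alpha)(\te_k)$ is then a change of variables together with $\gamma^*(\pi^*\alpha)=\pi^*\alpha$. The pointwise identity $\dd(g^{-1}\pi^*\alpha\,g)=g^{-1}\pi^*(\dd_A\alpha)\,g$ is a direct consequence of $\pi^*A=-\dd g\cdot g^{-1}$ and the graded Leibniz rule, and Stokes' theorem on each oriented lifted cell gives the chain-map property (modulo the sign $(-1)^{|\lambda|+1}$ built into the paper's definition of $\dd$, which is harmless). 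Stage 2 is also correct and is the right reformulation: both sides are $\Gamma$-invariants of untwisted complexes on $\tilde M$, and the gauge change $\alpha\mapsto g^{-1}\pi^*\alpha\,g$ untwists the De Rham differential.

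Stage 3 is where the argument is not yet a proof. Your diagnosis of the obstacle is correct — $\Gamma$-invariants is not exact, so the equivariant quasi-isomorphism upstairs does not automatically descend — and the spectral-sequence strategy is the right one. The problem is the phrase ``Čech--cellular double complex.'' Unlike differential forms, cellular cochains do not form a sheaf: there is no natural way to restrict the cellular cochain complex of $W$ to an arbitrary open set $U_I$, and subdividing $W$ adapted to the cover only gives you subcomplexes $W_I\hookrightarrow W_J$ going the wrong way (inclusions, not restrictions), so the usual Čech double complex does not assemble. As written, the object you want to take a spectral sequence of has not been constructed.

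Two standard ways to close the gap. One: keep the Čech--De Rham double complex to identify $H^*_A(M,\g)$ with the sheaf cohomology $H^*(M,\mathcal{E})$ of the local system $\mathcal{E}$ determined by $\Ad\rho$ (the sheaves $\Omega^k\otimes\mathcal{E}$ are fine, so this is the abstract de Rham theorem), and then invoke separately the classical fact that cellular cohomology with local coefficients agrees with singular cohomology with local coefficients, which in turn agrees with $H^*(M,\mathcal{E})$; the map $I$ is checked to realize the comparison at the $\check C^0$ column. Two: avoid sheaves entirely and argue by induction on the skeleta of $W$ using the long exact sequences of pairs, excision, and the five lemma, reducing to the case of a single cell where the twisted Poincar\'e lemma gives the isomorphism directly. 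Either fix turns your outline into a complete proof; without one of them the final step is a genuine gap.
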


When $k=1$, we can interpret $I$ as the derivative of the holonomy function.  
More precisely, let $M$ be a manifold and $(P,\boF_t)$ be a 1-parameter family of foliations on the same $G$-bundle $P$. Assuming the existence of a section $s:M\to P$, the family of foliations gives a family of connection 1-forms $A_t$ satisfying $\dd A_t+\frac{1}{2}[A_t\wedge A_t]=0$.
Let $A=A_0$ and suppose that this family is smooth. Then $\alpha=\frac{\dd A_t}{\dd t}_{t=0}$ satisfies $\dd \alpha+[A\wedge\alpha]=0$. It represents an element $[\alpha]$ of $H^1_A(M,\g)$. 

Let $\gamma$ be  loop in $\pi_1(M,x)$ then $\hol_{\gamma} A_t \in G$ and $\frac{\dd}{\dd t}|_{t=0}\hol_{\gamma}A_t (\hol_{\gamma}A)^{-1}=I(\alpha)(\gamma)$.
To see this, it is sufficient to do the computation in the universal cover $\tilde{M}$ after having trivialized $A$.

Let $\Sigma$ be an oriented surface and $\rho$ be an irreducible representation corresponding to a flat connection $A$. Then, the tangent space of $\boM(\Sigma)$ at $[\rho]$ is isomorphic to $H^1(\Sigma,\Ad_{\rho})\simeq H^1_A(\Sigma,\g)$. The cup-product in cohomology corresponds to the exterior product of forms in De Rham cohomology. Then, the form $\omega_A:H^1_A(\Sigma,\g)^2\to \R$ is given by $\omega_A(\alpha,\beta)=\int_{\Sigma}\langle \alpha\wedge\beta\rangle$.
Remarking that $A$ does not appear in this formula is the key point for showing that $\omega$ is a closed 2-form. The following proposition is a main technical ingredient for relating gauge theoretical arguments to the study of representation spaces.

\begin{proposition}\label{releve}
Let $M$ be a compact manifold of dimension less than 3, and $U$ be a contractible open set in $\boM(M,\su)$ consisting of regular representations. Then, there is a smooth map $A:U\to \Omega^1(M,\g)$ such that for all $\tau\in U$, $A_{\tau}$ is a flat connection whose holonomy is in the class of $\tau$.
\end{proposition}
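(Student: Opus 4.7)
The plan is to build the section $A$ in two steps: first construct local smooth lifts on small neighborhoods via a slice argument, then glue them into a global one using contractibility of $U$.

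For the local step, I would fix $\tau_0 \in U$ and pick any flat connection $A_0 \in \Omega^1(M, \g)$ with holonomy class $\tau_0$. Equipping $M$ with a Riemannian metric, I introduce the Coulomb slice $S = \{A_0 + \alpha : \dd_{A_0}^* \alpha = 0\}$ transverse to the gauge orbit through $A_0$. Restricted to $S$, the flatness equation $\dd_{A_0}\alpha + \frac{1}{2}[\alpha \wedge \alpha] = 0$ cuts out, in a suitable Sobolev completion, a submanifold whose tangent space at $A_0$ is the space of $A_0$-harmonic $1$-forms, identified via Hodge theory and Theorem \ref{derham} with $H^1_{A_0}(M, \g) \simeq T_{\tau_0}\boM$. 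Since $\tau_0$ is regular, a Kuranishi-type application of the implicit function theorem shows that the projection from flat connections in $S$ to $\boM$ is a local diffeomorphism onto a neighborhood of $\tau_0$; inverting it yields a smooth local section $A : V \to \Omega^1(M, \g)$.

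For the globalization, these local slices endow the projection
$$\pi : \{A \in \Omega^1(M, \g)\;\text{flat, holonomy regular}\} \to \boM(M,\su)_{\reg}$$
with the structure of a smooth principal bundle whose structure group is the gauge group $C^\infty(M,\su)$ modulo its center $\{\pm 1\}$ (the common stabilizer of flat connections with irreducible holonomy). Because $U$ is contractible and paracompact, the restriction $\pi^{-1}(U) \to U$ is trivial and admits a smooth global section, which is the desired $A$. Equivalently, one could patch local sections $A_i : V_i \to \Omega^1(M, \g)$ directly using gauge transformations $g_{ij} : V_i \cap V_j \to \su$ with $A_j = A_i^{g_{ij}}$; the resulting non-abelian \v{C}ech $1$-cocycle is a coboundary since $U$ is contractible, producing the global lift.

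The main obstacle is analytic rather than conceptual: one must set up the slice theorem and the triviality of principal bundles over a contractible base rigorously in the Fréchet or Sobolev category. Concretely this means completing connections and gauge transformations in an appropriate $W^{k,p}$, checking smoothness and properness of the gauge action near irreducibles, and using elliptic regularity to return to $C^\infty$; in the case of a $3$-manifold with boundary one must also impose appropriate boundary conditions on $\dd_{A_0}^*\alpha = 0$. With these standard ingredients in place, the construction above is essentially forced.
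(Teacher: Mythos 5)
Your proof is correct in spirit, but it takes a genuinely different route from the paper. You work at the level of the (infinite-dimensional) space of connections: local slices via the Coulomb gauge and a Kuranishi argument, then a global section of the gauge-theoretic principal bundle over $U$. This is the standard analytic approach, and it does require the Sobolev apparatus (completion of the gauge group and connection space, ellipticity of $\dd_{A_0}^* \oplus \dd_{A_0}$, regularity bootstrap, boundary conditions if $\partial M \neq \emptyset$) that you acknowledge in your last paragraph.

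The paper sidesteps all of this by working first on the \emph{finite-dimensional} representation side and then crossing with $U$. Concretely: since $U$ consists of regular points, the quotient map $\boR(M)\to\boM(M)$ is a smooth fibration over $U$, and contractibility of $U$ yields a smooth section $\rho:U\to\boR(M)$ by purely finite-dimensional arguments. One then forms the flat $\su$-bundle $P=\bigl(\tilde M\times\su\times U\bigr)/\!\sim$ over $M\times U$ whose holonomy over each slice $M\times\{\tau\}$ is $\rho_\tau$. Because $M\times U$ has the homotopy type of $M$ (a complex of dimension $\le 3$) and $\su$ is $2$-connected, $P$ is trivial as a principal bundle; a global smooth section $s:M\times U\to P$ pulls back the flat structure to a flat connection on $M\times U$, and restricting to each slice gives the desired smooth family $A_\tau$. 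This buys a cleaner proof with no slice theorem, no Sobolev completions, and no elliptic regularity: the only analytic input is the existence of a smooth section of a finite-dimensional fibration over a contractible base and the homotopy classification of $\su$-bundles. Your approach, by contrast, is more robust and generalizes directly to settings where there is no good finite-dimensional representation variety (e.g.\ higher-dimensional base manifolds or non-compact gauge groups), at the cost of the functional-analytic overhead. Both proofs are valid.
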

\begin{proof}
With our assumption, the quotient map $\boR(M)\to \boM(M)$ is a fibration over $U$ as $U$ consists in regular representations. As $U$ is contractible, there is a smooth section $\rho:U\to \boR(M)$ of the quotient map.
Fix a point $x$ in $M$ and consider the product $P=\tilde{M}\times \su\times U/\sim$ where we set $(\gamma.m,\rho_{\tau}(\gamma)g,\tau)\sim (m,g,\tau)$ for all $\gamma\in \pi_1(M,x)$. This construction gives a flat $G$-bundle over $M\times U$ such that for each $\tau\in U$, the holonomy of $P$ over $M\times\{\tau\}$ is given by $\rho_{\tau}$.

Moreover, $M\times U$ is homotopic to $M$ and $P$ has to be trivial as a $G$-bundle. Taking a smooth section $s:M\times U\to P$, we pull back the flat structure of $P$ to a flat connection $A$ on $M\times U$. The restriction of $A$ to each slice $A\times\{\tau\}$ gives the connection $A_{\tau}$ that we are looking for.
\end{proof}

As a first application of gauge theory, we finally prove that $\boM^{\reg}(\Sigma)$ is a symplectic manifold.
\begin{proposition}
Let $\Sigma$ be a closed surface. The non-degenerate 2-form $\omega$ on the regular part of $\boM(\Sigma)$ is closed, and hence symplectic.
\end{proposition}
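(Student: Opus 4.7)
My plan is to pull $\omega$ back to a neighborhood of any regular representation using a lift to flat connections, and to recognize that pullback as the restriction of a manifestly closed 2-form on the infinite-dimensional affine space of all connections.

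First, using Proposition \ref{releve}, I would cover $\boM^{\reg}(\Sigma)$ by contractible open sets $U$ admitting smooth maps $A:U\to \Omega^1(\Sigma,\g)$ whose image consists of flat connections whose holonomy represents the prescribed class. Since closedness is a local property, it suffices to verify that $A^*\omega$ is closed on each such $U$.

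Second, I would introduce the 2-form $\Omega$ on the affine space $\mathcal{A}=\Omega^1(\Sigma,\g)$ defined at every basepoint $B$ by the same formula
\[\Omega_B(\alpha,\beta)=\int_\Sigma \langle \alpha\wedge \beta\rangle, \qquad \alpha,\beta\in T_B\mathcal{A}\simeq \Omega^1(\Sigma,\g).\]
The crucial observation is that this expression is independent of $B$, so $\Omega$ has constant coefficients with respect to the affine structure on $\mathcal{A}$ and is therefore trivially closed. Pulling back via $A:U\to \mathcal{A}$ produces a closed 2-form $A^*\Omega$ on $U$.

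Third, I would identify $A^*\Omega$ with $\omega|_U$. A tangent vector $v\in T_\tau U$ is pushed forward to $\dd A(v)\in \Omega^1(\Sigma,\g)$, which by differentiating the flatness equation $\dd A_\tau+\tfrac{1}{2}[A_\tau\wedge A_\tau]=0$ in $\tau$ is a $\dd_{A_\tau}$-closed 1-form. Via the de Rham isomorphism of Theorem \ref{derham}, its class $[\dd A(v)]\in H^1_{A_\tau}(\Sigma,\g)$ corresponds to $v$ regarded as an element of $T_{[\rho_\tau]}\boM^{\reg}(\Sigma)=H^1(\Sigma,\Ad_{\rho_\tau})$, since the comparison map $I$ is precisely the derivative of holonomy along a deformation of connections. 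Combined with the fact that the cup product in twisted cohomology corresponds to the exterior product of twisted de Rham forms, this yields $A^*\Omega=\omega|_U$, and hence $\dd\omega=0$ on $U$.

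The only subtlety is the identification of the tangent vector $\dd A(v)$ with the de Rham cocycle representing $v\in T_{[\rho_\tau]}\boM^{\reg}(\Sigma)$, which is exactly the content of the derivative-of-holonomy interpretation of Theorem \ref{derham}. Once this is granted, the whole argument rests on the trivial observation that a 2-form with constant coefficients on an affine space is closed; no further computation is needed.
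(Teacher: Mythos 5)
Your proposal is correct and follows essentially the same route as the paper: use Proposition \ref{releve} to produce a local lift $A:U\to\Omega^1_{\flat}(\Sigma,\g)$, identify $\omega|_U$ with the pullback of the constant 2-form $(\alpha,\beta)\mapsto\int_\Sigma\langle\alpha\wedge\beta\rangle$ on the affine space of connections via the de Rham isomorphism, and conclude from the closedness of a constant-coefficient form. Your writeup is a touch more explicit than the paper's about why $\dd A(v)$ is $\dd_{A_\tau}$-closed and about the role of cup-product/wedge compatibility, but these are exactly the ingredients the paper invokes.
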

\begin{proof}
Let us show it on any open set $U$ as in Proposition \ref{releve}. Let $A:U\to\Omega^1(M,\g)$ be the map given by this proposition. The remark following Theorem \ref{derham} implies that the derivative of $A$ at $\tau$ is a map $T_{\tau}\boM(\Sigma)\to\Omega^1(M,\g)$ taking values in $\ker d_{A_{\tau}}$. Considering its class in $H^1_{A_{\tau}}(\Sigma,\g)$, one gets the inverse De Rham isomorphism. We conclude that the form $\omega$ on $U$ is the pull-back of the form $\omega(\alpha,\beta)=\int_{\Sigma}\langle \alpha\wedge\beta\rangle$ on $\Omega^1(\Sigma,\g)$.
This latter expression is a constant 2-form on an infinite dimensional space. It is closed in the sense that for any smooth maps $X,Y,Z$ from $\Omega^1(\Sigma,\g)$ to itself, the following identity holds: 
\begin{equation*}
\begin{split}
''d\omega(X,Y,Z)''=X.\omega(Y,Z)-Y.\omega(X,Z)+Z.\omega(X,Y)+\omega(X,[Y,Z])+\\
\omega(Y,[Z,X])+\omega(Z,[X,Y])=0
\end{split}
\end{equation*}
This identity, pulled back to $U$ implies that $\omega$ is closed.

\end{proof}

\section{Chern-Simons theory}

The term Chern-Simons theory usually consists in the study of secondary characteristic classes on flat bundles. Indeed, by Chern-Weyl theory, we know that given $\pi:P\to M$ a $G$-bundle, we can compute the characteristic classes (Chern, Euler and Pontryagin classes) of associated bundles by integrating invariant polynomials in the curvature of some connection of $P$. The existence of flat connections implies the vanishing of all characteristic classes. Chern and Simons introduced some primitives of the Chern-Weyl classes giving non trivial invariants of flat $G$-bundles. For our purposes, we will reduce Chern-Simons theory to the following constructions:

\begin{enumerate}
\item Given a closed 3-manifold $M$, we construct a locally constant map $CS:\boM(M)\to \R/4\pi^2 \Z$.
\item Given a closed surface $\Sigma$, we get an hermitian line bundle with connection $(\boL,|\cdot |,\nabla)$ over the regular part of $\boM(\Sigma)$ such that the curvature of $\nabla$ is the symplectic form $\omega$. We will call this bundle the prequantum bundle of $\Sigma$ and denote it by $\boL_{\Sigma}$.
\item Given a 3-manifold $M$ with boundary, we obtain a flat lift $CS$ of the restriction map $\boM(M)\to\boM(\partial M)$ to the prequantum bundle of $\partial M$.
$$\xymatrix{
& \boL_{\partial M}\ar[d]\\
\boM^{\rm reg}(M)\ar[r]\ar@{-->}^{CS}[ur]&\boM^{\rm reg}(\partial M)}$$
\end{enumerate}
\subsection{The Chern-Simons functionnal}

Let $M$ be a 3-manifold possibly with boundary, $\pi:P\to M$ a trivializable $G$-bundle and $\boF$ a flat structure on $P$.
Given a section $s:M\to P$, one obtains a flat connection $A$ by the procedure described in the last chapter. One set $CS(A)=\frac{1}{12}\int_{M} \langle A\wedge [A\wedge A]\rangle$.

Recall that $A$ belongs to $\Omega^1(M,\g)$ so that $A\wedge A\wedge A$ belongs to $\Omega^3(M,\g^{\otimes 3})$. Applying the antisymmetric map $(X,Y,Z)\mapsto \langle X,[Y,Z]\rangle$ to the coefficients, one obtains the 3-form $\langle A\wedge [A\wedge A]\rangle$ which can then be integrated.

The main point is to compute how this functional changes when changing the section. Given a map $g:M\to G$ one has
$$CS(A^g)=CS(A)+\frac{1}{2}\int_{\partial M} \langle g^{-1}Ag\wedge g^{-1}\dd g\rangle-\frac{1}{12}\int_{M}\langle g^{-1}\dd g\wedge [g^{-1}\dd g\wedge g^{-1}\dd g]\rangle.$$

The proof is a direct consequence of Stokes formula, with the use of some formulas for differential forms on Lie groups, see \cite{freed}.
Denote by $W(g)$ the term $\frac{1}{12}\int_{M}\langle g^{-1}\dd g\wedge [g^{-1}\dd g\wedge g^{-1}\dd g]\rangle$. It is also called Wess-Zumino-Witten functional. By definition, the form $g^{-1}\dd g$ is equal to $g^{*}\theta$ where $\theta$ is the left Maurer-Cartan form on $G$. Hence $W(g)=\int_M g^*\chi$ where $\chi=\frac{1}{12}\langle\theta\wedge[\theta\wedge\theta]\rangle$ is the Cartan 3-form on $G$. We deduce from this that assuming $G=\su$, $W(g) \mod 4\pi^2$ depends only on the restriction of $g$ to $\partial M$.

Indeed, given another 3-manifold $N$ with an oriented diffeomorphism $\phi:\partial N\to \partial M$ and a map $h:N\to G$ such that $g\circ \phi = h$, we consider the integral $\int_{M\cup(-N)}f^*\chi$ where $f$ stands for $g$ on $M$ and $h$ on $N$. This integral is equal to $W(g)-W(h)$. On the other hand, it is equal to $(\deg f) \int_G \chi= 4\pi^2\deg f \in 4\pi^2 \Z$.
This proves that if $M$ has no boundary, then $CS(A^g)=CS(A)\mod 4\pi^2$.

Hence, the map $CS: \boM(M,\su)\to \R/4\pi^2\Z$ is well-defined when $M$ has no boundary.

In order to show that it is locally constant, recall that if $A_t$ is a smooth family of flat connections with $A_0=A$ then the derivative $\alpha=\frac{\dd A_t}{\dd t}|_{t=0}$ satisfies $\dd \alpha+[\alpha\wedge A]=0$. Moreover, $\frac{\dd CS(A_t)}{\dd t}|_{t=0}=\frac{1}{4}\int_M\langle \alpha\wedge[A\wedge A]\rangle=-\frac{1}{2}\int_M \langle \alpha\wedge\dd A\rangle$.

On the other hand, $-\langle \alpha\wedge \dd A\rangle=\dd\langle\alpha\wedge A\rangle+\langle\dd\alpha\wedge A\rangle$. Moreover $\langle \dd\alpha\wedge A\rangle=-\langle [\alpha\wedge A]\wedge A\rangle=\langle [A\wedge A]\wedge \alpha\rangle=2\langle \dd A\wedge \alpha\rangle$.
This implies the identity $\langle \alpha\wedge \dd A\rangle=\dd\langle \alpha\wedge A\rangle.$ Hence, one has
$\frac{\dd CS(A_t)}{\dd t}|_{t=0}=\frac{1}{2}\int_{\partial M}\langle A\wedge \alpha\rangle$.

In the case where $M$ has no boundary, this proves that the Chern-Simons function is locally constant on $\boM(M)$.

\subsection{Construction of the prequantum bundle}\label{prequantum}

Let $\Sigma$ be a closed compact surface. Recall that for $G=\su$, all principal $G$-bundles are trivial, hence flat structures are encoded by flat connections $A\in \Omega^1_{\flat}(\Sigma,\g)$ (that is $A$ satisfies $\dd A+\frac{1}{2}[A\wedge A]=0$). Moreover, two connections represent the same element of $\boM(\Sigma)$ if and only they are related by the action of $g:\Sigma\to G$.
Consider the finer equivalence relation where $A$ and $A^g$ are considered to be equivalent if $CS(A)=CS(A^g)$. New equivalence classes form a bundle over the old ones with fiber $\R/4\pi^2\Z$. This is the construction of the prequantum bundle. Let us give another point of view of the same construction, technically more appropriate.

Set $L=\Omega^1_{\flat}(\Sigma,\g)\times \R/2\pi\Z$ and define an action of the gauge group on $L$ by the formula:
$(A,\theta)^g=(A^g,\theta+c(A,g))$ where 
$$c(A,g)=\frac{1}{4\pi}\int_{\Sigma} \langle g^{-1}Ag\wedge g^{-1}\dd g\rangle-\frac{1}{2\pi}W(g).$$
Then, $c$ is a cocycle in the sense that for any flat connection $A$ and gauge group elements $g,h$ one has $c(A,gh)=c(A,g)+c(A^g,h)$. 

Consider the quotient map $L/\Gamma\to \Omega^1_{\flat}(\Sigma,\g)=\boM(\Sigma)$. By using the local sections of the projection $\Omega^1_{\flat}(\Sigma,\g)\to \boM(\Sigma)$ given by Proposition \ref{releve} and the fact that the gauge group acts freely on connections encoding irreducible representations, one find that the above quotient is actually a principal fiber bundle over $\boM^{\rm reg}(M)$ with fiber $\R/2\pi\Z$.
The prequantum line bundle $\boL$ is the fiber bundle associated to $L/G$ with the representation of $\R/2\pi\Z$ on $\C$ given by $\theta.z=e^{i\theta}z$. It is naturally an hermitian line bundle.

Let us show that this bundle has a connection with curvature $\omega$.

On the trivial bundle $L\to \Omega^1_{\flat}(\Sigma,\g)$ there is a natural connection given by the expression $\dd-\lambda$ where $\lambda$ is the 1-form given by $\lambda_A(\alpha)=\frac{1}{4\pi}\int_{\Sigma}\langle A\wedge \alpha\rangle$.
One can check directly that this form is equivariant and hence defines a connection on the quotient $\boL$. The curvature of this connection is the derivative of $\lambda$, that is $\frac{1}{2\pi}\omega$. The same formula is true on the quotient as both symplectic forms $\omega$ on $\Omega^1_{\flat}(\Sigma,\g)$ and $\boM(\Sigma)$ correspond in the quotient.

We can give the third and last application: given a 3-manifold with boundary $M$, and a flat connection $A$ on it, we consider the element $(A,CS(A)/2\pi)\in L$. Given $g:M\to G$, the connection $A^g$ will be sent to the equivalent element $(A^g,CS(A^g)/2\pi)$. Hence, this map also denoted by $CS$ is well defined from $\boM(M)$ to $\boL_{\partial M}$.
Moreover, given a smooth family $A_t$, we already computed $\frac{\dd CS(A_t)}{\dd t}=(\alpha,\frac{1}{4\pi}\int_{\partial M}\langle A\wedge \alpha\rangle$), where $\alpha=\frac{\dd A_t}{\dd t}$. This derivative is in the kernel of the connection $\dd -\lambda$, which shows that $CS(A_t)$ is a parallel lift over $\boL_{\partial M}$ over the restriction of $A_t$ to $\partial M$ as asserted.

\subsection{Examples}
\subsubsection{Closed 3-manifolds}
Let us look at some examples of Chern-Simons invariant for a closed manifold $M$. Recall that we constructed a locally constant map $CS:\boM(M)\to \R/4\pi^2\Z$. The trivial representation is obtained as the holonomy of the connection $A=0$. In that case, one has $CS(A)=0$.

For less trivial examples, consider some manifolds obtained as a quotient of $S^3=\su$ by a finite subgroup $H$, for instance the lens spaces $L(p,1)$ given by $H_p=\{\begin{pmatrix} e^{2ik\pi/p}&0\\0&e^{-2ik\pi/p}\end{pmatrix}, k\in \Z/p\Z\}$, or the quaternionic manifold $Q_8$ given by $H=\{\pm 1, \pm {\bf i},\pm{\bf j},\pm{\bf k}\}$.
In these cases $M=\su/H$ and there is a natural non trivial flat bundle $P$ given by the quotient of $\su\times \su$ by the equivalence relation $(g_1,g_2)\sim (g_1 h,h^{-1}g_2)$ for $h\in H$. The map $\pi:P\to M$ is the first projection. A section is given by $s(g)=(g,g^{-1})$. One compute that the connection associated to that section is $g^{-1}\dd g$. Hence, $CS(A)=\int_{S^3/H}\chi=4\pi^2/|H|$.

Another easy example is $M=S^1\times S^1\times S^1$. In that case, all representations are abelian, hence all flat connections are equivalent to connections with values in $\R{\bf i}$. As $\langle {\bf i},[{\bf i},{\bf i}]\rangle=0$, one has necessarily $CS(A)=0$ for all $A$. This is compatible with the fact that $\boM(M)$ is connected and $CS$ is locally constant.

\subsubsection{The torus case}
Let us give a finite dimensional construction for the prequantum bundle $\boL$ over the torus $\Sigma=S^1\times S^1$.
Consider the map $F:\R^2\to \boM(\Sigma)$ sending $(\phi,\psi)$ to the representation $\rho_{\phi,\psi}:\Z^2\to \su$ where $\rho_{\phi,\psi}(a,b)=\exp({\bf i} (a\phi+b\psi))$.
The fibers of $F$ are the orbits of the action of the group $H=\Z^2\rtimes \Z/2\Z$ where the first factor acts by translation and the second one by inversion.

One can lift the map $F$ to $\Omega^1_{\flat}(\Sigma,\g)$ by sending $(\phi,\psi)$ to the connection $A_{\phi,\psi}={\bf i}(\phi\dd s+\psi\dd t)$ where $s,t$ are coordinates of the two $S^1$ factors identified to $\R/\Z$. 
The action of $H$ is realized by a gauge group action as follows. The translation by $(2\pi k, 2\pi l)$ is given by the action of $g_{k,l}(t,s)=\exp(2{\bf i}\pi (ks+lt))$, whereas the inversion is given by the action of the constant map $g={\bf j}$.

These actions lift to the trivial bundle $L=\R^2\times \R/2\pi\Z$ in the following way:
$(\phi,\psi,\theta)^{g_{k,l}}=(\phi+2\pi k,\psi+2\pi l,\theta+\frac{1}{4\pi}\int_{\Sigma}
\langle A_{\phi,\psi},g_{k,l}^{-1}\dd g_{k,l}\rangle)$ as one can show that $W(g_{k,l})=0$ (see \cite{kauf}).
We obtain $(\phi,\psi,\theta)^{g_{k,l}}=(\phi+2\pi k,\psi+2\pi l,\theta+\phi l-\psi k)$ and $(\phi,\psi,\theta)^{\bf j}=(-\phi,-\psi,\theta)$. We recognize an action of $H$ on $L$. The quotient produces a bundle over the quotient of $\R^2$ by $H$, smooth over non central representations. This gives an elementary construction of the prequantum bundle in that case which is very useful for computing Chern-Simons invariant for knot exteriors.

\subsubsection{Some knot complements}

Let us give an application of these constructions in the case of a knot complement. In that case, the manifold $M$ is the complement of a tubular neighborhood of a knots in $S^3$. Its boundary is identified with $S^1\times S^1$. The Lagrangian immersion $r:\boM^{\reg}(M)\to \boM^{\reg}(S^1\times S^1)$ is shown in Figure \ref{embed}. What kind of information can we extract from the existence of a lift $CS:\boM^{\reg}\to \boL_{S^1\times S^1}$?
If we have a closed loop $\gamma$ in $\boM^{\reg}$, then we showed that it lifts to $\boL_{S^1\times S^1}$. In other terms, the holonomy of $\boL$ along $r(\gamma)$ is trivial. This holonomy is easy to compute as $e^{iA/2\pi}$ where $A$ is the symplectic area enclosed by $r(\gamma)$: hence $A$ has to be an integral multiple of $4\pi^2$. We can check this in the two examples of Figure \ref{embed}: more generally, this shows for instance that $r(\gamma)$ cannot be a small oval.

\section{Surfaces with higher genus}\label{genus}
Till now, we did not say much about surfaces with positive genus although they are very important and interesting. Let $\Sigma$ be such a surface of genus $g$. Then, we showed that $\boM(\Sigma)$ splits into an irreducible and an abelian part and that the irreducible part is a smooth symplectic manifold of dimension 6g-6, with a prequantum bundle $\boL\to \boM(\Sigma)$. To answer simple questions about the topology of that space or its symplectic volume, we introduce a family of functions called trace functions which give to $\boM(\Sigma)$ the structure of an integrable system.

\subsection{Trace functions and flat connection along a curve}

Let $\gamma$ a 1-dimensional connected submanifold of $\Sigma$ which do not bound a disc. We will call $\gamma$ a {\it curve}.
We associate to $\gamma$ the map $h_{\gamma}:\boM(\Sigma)\to [0,\pi]$ by the formula $h_{\gamma}([\rho])=\ang \rho(\gamma)$. This gives a well-defined and continuous function on $\boM(\Sigma)$, smooth where it is different from 0 and $\pi$. We will compute the Hamiltonian vector field $X_{\gamma}$ associated to this map and compute its flow, showing that it is $4\pi$-periodic. We will give later an interpretation of this flow in terms of twisting of flat bundles on $\Sigma$ along $\gamma$.

Our way of understanding such constructions uses heavily a lemma on the normalization of a flat connection along a curve that we state here without proof.

\begin{lemma}\label{normalize}
Let $\Sigma$ be a compact oriented surface and $\Phi:S^1\times[0,1]$ be an orientable embedding. Set $\gamma=\Phi(S^1\times\{1/2\})$ and let $U$ be a contractible open set in $\boM^{\reg }(\Sigma)$.
Suppose that for all $\tau$ in $U$, the representation indexed by $\tau$ take non-central values on $\gamma$. Then there is a smooth map $A:U\to\Omega^1_{\flat}(\Sigma,\lu)$ and a smooth map $\xi:U\to\g$ such that
\begin{enumerate}
\item The connection $A_{\tau}$ represents $\tau$.
\item $\Phi^* A_{\tau}= \xi(\tau) \dd t$ where $t$ is the coordinate identifying $S^1$ to $\R/\Z$.
\end{enumerate}
\end{lemma}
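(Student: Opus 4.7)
The plan is to start from a smooth family of flat connections provided by Proposition \ref{releve}, then apply a smooth family of gauge transformations that put the pullback by $\Phi$ into the form $\xi(\tau)\,\dd t$.

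First, pick $B:U\to\Omega^1_{\flat}(\Sigma,\lu)$ smooth with $B_\tau$ representing $\tau$. Let $H(\tau)\in\su$ denote the holonomy of $B_\tau$ along $\gamma$ based at $\Phi(1,1/2)$; this is smooth in $\tau$ and non-central by hypothesis. Because $U$ is contractible and $\exp:\lu\to\su$ is a local diffeomorphism away from central values (more precisely, its restriction to $\{\eta:\|\eta\|<2\pi\}$ is a two-sheeted cover of $\su\setminus\{-1\}$), one can smoothly select $\xi:U\to\lu$ with $\exp(-\xi(\tau))=H(\tau)$.

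Next, work on the annulus and write $\Phi^*B_\tau=a(\tau,t,s)\,\dd t+c(\tau,t,s)\,\dd s$. Normalize in two stages. First, solve the $s$-ODE $\partial_s g_1=-c\,g_1$ with initial condition $g_1(\tau,t,0)=1$; the resulting $g_1$ is smooth in $(\tau,t,s)$ and kills the $\dd s$-component. After this gauge, the pullback becomes $b(\tau,t)\,\dd t$, and the flatness equation $\dd A+\tfrac{1}{2}[A\wedge A]=0$ forces $\partial_s b=0$, so $b$ depends only on $(\tau,t)$. Second, let $P_\tau(t)$ solve $P_\tau'=-b(\tau,t)P_\tau$ with $P_\tau(0)=1$, so that $P_\tau(1)=H(\tau)$, and set $g_2(\tau,t)=P_\tau(t)\exp(t\xi(\tau))$. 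The normalization $\exp(-\xi(\tau))=H(\tau)$ guarantees $g_2(\tau,0)=g_2(\tau,1)$, so $g_2$ is well-defined on $S^1$. A direct computation gives $(b\,\dd t)^{g_2}=\xi(\tau)\,\dd t$, and since $g_2$ is independent of $s$, no $\dd s$-component is reintroduced. Hence $\bar g_\tau=g_1g_2:S^1\times[0,1]\to\su$ is a smooth family of gauge transformations on the annulus realising the desired normalisation.

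The main obstacle is the final step: extending $\bar g_\tau$ smoothly (in $\tau$ as well as in the point of $\Sigma$) to a gauge transformation $g_\tau:\Sigma\to\su$ defined on the whole surface. In general $\bar g_\tau$ is not identity on $\Phi(S^1\times\{0,1\})$, so one cannot extend by the identity. However, the complement of the annulus in $\Sigma$ is a compact surface (possibly disconnected) with one or two boundary circles, and the existence of a smooth extension with prescribed smooth boundary values into $\su\cong S^3$ is unobstructed because $\pi_1(\su)=\pi_2(\su)=0$. The space of such extensions, relative to fixed boundary data, is moreover path-connected; combining this with the contractibility of $U$ and a standard partition-of-unity argument (patching local extensions through the exponential chart of $\su$) produces a smooth family $g_\tau:\Sigma\to\su$ agreeing with $\bar g_\tau$ on the annulus. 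Setting $A_\tau=B_\tau^{g_\tau}$ then gives the required flat connection with $\Phi^*A_\tau=\xi(\tau)\,\dd t$. The smooth-families aspect of this extension is the only genuinely delicate point; the pointwise existence is elementary obstruction theory, but organising it smoothly in the parameter $\tau$ is where one has to invoke the topology of $\su$ together with the contractibility of $U$.
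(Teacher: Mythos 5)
The paper does not prove Lemma \ref{normalize}; it is explicitly ``stated without proof,'' so there is no paper argument to compare against. Your two-stage strategy---lift $U$ to a smooth family $B_\tau$ of flat connections via Proposition \ref{releve}, then gauge-normalise on the annulus and extend the gauge transformation over $\Sigma$---is a natural and essentially correct way to prove it.

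Three points should be tightened. First, the claim $P_\tau(1)=H(\tau)$ is not quite right: applying $g_1$ conjugates the holonomy by $g_1(\tau,0,1/2)$, so $P_\tau(1)$ is only \emph{conjugate} to $H(\tau)$. This is harmless, since you may simply define $\xi(\tau)$ by $\exp(-\xi(\tau))=P_\tau(1)$ directly ($P_\tau(1)$ is smooth in $\tau$ and still non-central), but the stated equality is false. Second, the description of $\exp$ on $\{\eta: \|\eta\|<2\pi\}$ as a two-sheeted cover of $\su\setminus\{-1\}$ is inaccurate: that ball contains the sphere of radius $\pi$, which collapses entirely onto $-1$, so the map is neither a cover nor even a map into $\su\setminus\{-1\}$. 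What you actually need is the simpler and correct fact that $\exp$ restricts to a diffeomorphism from the open ball of angle less than $\pi$ onto $\su\setminus\{-1\}$; this gives $\xi(\tau)$ smoothly without appealing to contractibility of $U$ at that step. Third, the extension step you rightly single out can be phrased more crisply as a fibration argument: since $\su\cong S^3$ is $2$-connected and $\Sigma$ is $2$-dimensional, the restriction map $C^\infty(\Sigma,\su)\to C^\infty(\Phi(S^1\times[0,1]),\su)$ is a surjective fibration (any map on the annulus extends, and extensions form a connected space), so the smooth family $\tau\mapsto\bar g_\tau$ over the contractible base $U$ lifts to a smooth family $g_\tau:\Sigma\to\su$. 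Setting $A_\tau=B_\tau^{g_\tau}$ then completes the argument exactly as you describe.
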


We will say that a flat connection $A$ on $\Sigma$ is {\it normalized along} $\Sigma$ if there exists $\xi\in \g$ such that $\Phi^*A=\xi\dd t$. In that case, the holonomy of $A$ along $\gamma$ is equal to $\exp(-\xi)$ and one has $h_{\gamma}(A)=\frac{1}{\sqrt{2}}||\xi|| \mod \pi$.

The aim of this section is to identify the hamiltonian vector field of $h_{\gamma}$ that is, the vector field $X_{\gamma}$ on $\boM^{\reg}(\Sigma)$ such that $i_{X_{\gamma}}\omega=\dd h_{\gamma}$. We will give a De Rham lift of this vector field assuming that all connections are normalized along $\gamma$. This description will allow us to compute its flow and its lift to the prequantum bundle.

\begin{proposition}\label{flow}
In the settings of Lemma \ref{normalize}, a lift of $X_{\gamma}$ at $A$ normalized such that $\Phi^*A=\xi\dd t$ is given by the connection $\Phi_*(\frac{-\xi}{\sqrt{2}||\xi||}\phi(s)\dd s )$ where $s$ is the coordinate of $[0,1]$ and $\phi$ is a function with support in $[0,1]$ and integral 1.
\end{proposition}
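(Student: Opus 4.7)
The plan is to verify Hamilton's equation $i_\beta\omega = dh_\gamma$ at the class $[A]$, where $\beta := \Phi_*(\zeta\phi(s)\,\dd s)$ with $\zeta := -\xi/(\sqrt{2}\|\xi\|)$. I will compute both sides in the De Rham picture, using the normalization of Lemma \ref{normalize} to produce adapted representatives both for $\beta$ itself and for arbitrary tangent vectors at $[A]$.

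First I would check that $\beta$ is $d_A$-closed, so that it defines a class in $H^1_A(\Sigma,\g) \simeq T_{[A]}\boM^{\reg}(\Sigma)$. Since $\phi$ has support in the interior of $[0,1]$, $\beta$ extends smoothly by zero outside the annulus $\Phi(S^1\times[0,1])$ and is trivially $d_A$-closed there. On the annulus $\dd\Phi^*\beta = \dd(\zeta\phi(s)\,\dd s) = 0$ and
\[
[\Phi^*A \wedge \Phi^*\beta] = [\xi,\zeta]\phi(s)\,\dd t\wedge\dd s = 0,
\]
since $\zeta$ is a scalar multiple of $\xi$. This is the structural point of the proposition: the direction $-\xi/(\sqrt{2}\|\xi\|)$ is forced precisely to kill the bracket $[\xi,\zeta]$, and an arbitrary element of $\g$ would not give a flat deformation.

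Next I compute $dh_\gamma(v)$ for an arbitrary tangent vector $v$ at $[A]$. Lifting a curve $\tau\mapsto[A_\tau]$ tangent to $v$ via Lemma \ref{normalize} produces a smooth family $A_\tau$ with $\Phi^*A_\tau = \xi(\tau)\,\dd t$; its derivative $\alpha := \tfrac{d}{d\tau}|_{\tau=0}A_\tau$ represents $v$ under the De Rham isomorphism of Theorem \ref{derham} and satisfies $\Phi^*\alpha = \eta\,\dd t$ for $\eta := \dot\xi(0)\in\g$. Since $\hol_\gamma(A_\tau) = \exp(-\xi(\tau))$ has angle $\|\xi(\tau)\|/\sqrt{2}$, differentiating gives
\[
dh_\gamma(v) = \frac{d}{d\tau}\bigg|_{\tau=0}\frac{\|\xi(\tau)\|}{\sqrt{2}} = \frac{\langle\xi,\eta\rangle}{\sqrt{2}\,\|\xi\|}.
\]

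Finally I compute $\omega_A(\beta,\alpha) = \int_\Sigma\langle\beta\wedge\alpha\rangle$. Since $\beta$ is supported on the annulus, this reduces to $\int_{S^1\times[0,1]}\langle\zeta,\eta\rangle\phi(s)\,\dd s\wedge\dd t$. With the orientation convention that makes $\Phi$ orientation-preserving (so the volume form is $\dd t\wedge\dd s$), swapping the factors and using $\int_0^1\phi = 1$, $\int_{S^1}\dd t = 1$ gives $-\langle\zeta,\eta\rangle = \langle\xi,\eta\rangle/(\sqrt{2}\|\xi\|)$, matching $dh_\gamma(v)$. Since every tangent vector at $[A]$ admits such a normalized representative and $\omega_A$ descends to cohomology, this verifies $[\beta] = X_\gamma$ at $[A]$. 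The only real obstacle is the orientation/sign bookkeeping in the wedge integral; the whole argument pivots on the single algebraic identity $[\xi,\zeta]=0$, which is what distinguishes the scalar direction $\zeta\propto\xi$ as the one producing a genuine flat deformation.
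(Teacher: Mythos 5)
Your proof is correct and follows the same strategy as the paper: normalize tangent connections via Lemma \ref{normalize}, compute $\omega(X_\gamma,\alpha)$ and $\dd h_\gamma(\alpha)$ in those normalized coordinates, and observe that they coincide. The one extra step you include --- checking that $\beta$ is $\dd_A$-closed because $[\xi,\zeta]=0$ --- is a worthwhile sanity check that the paper leaves implicit, but the core computation is identical.
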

\begin{proof}
Let $A$ be a normalized flat connection. One need to prove the equality $\omega(X_{\gamma},Y)=\dd h_{\gamma}(Y)$ for all $Y$ in the tangent space of $[A]$. Thanks to Lemma \ref{normalize}, we can normalize all connections in the neighborhood of $A$, hence one can suppose that all tangent connections $\alpha$ are such that $\Phi^*\alpha= \eta \dd t $ for some $\eta \in \lu$.
One has $\omega(X_{\gamma},\alpha)=\int_{S^1\times[0,1]} \langle \phi(s)\dd s\frac{-\xi}{\sqrt{2}||\xi||}\wedge \eta \dd t\rangle=\frac{\langle \xi,\eta\rangle}{\sqrt{2}||\xi||}$.
On the other hand, $\dd h_{\gamma} (\alpha)=\frac{\langle \xi,\eta\rangle}{\sqrt{2}||\xi||}$. This proves the formula.
\end{proof}

The same proof gives that the hamiltonian flow of $h_{\gamma}$ sends the normalized connection $A$ to $\Phi_{\gamma}^T(A)=A+TX_{\gamma}$. This shows in particular that this flow is periodic as $A+4\pi X_{\gamma}=A^g$ for
$$g(t,s)=\exp(-\int_0^s\phi(u)\dd u \frac{2\sqrt{2}\pi \xi}{||\xi||}).$$

In this formula, $g$ is a smooth function which is equal to 1 outside the image of $\Phi$. In the case where $\gamma$ is separating, one can replace $4\pi$ by $2\pi$ and the function $g$ will still be well-defined, being equal to 1 on one side and to -1 on the other side. This shows that the hamiltonian flow of $h_{\gamma}$ for separating curves is $2\pi$-periodic although it is $4\pi$-periodic for non separating curves. Let us give a geometric interpretation of these flows.

Let $(P,\boF)$ be a flat $\su$-bundle over $\Sigma$ and $\gamma$ be a curve on $\Sigma$. The holonomy of $\boF$ along $\gamma$ is a transformation of the fiber which we suppose to be non-central.

Cutting $\Sigma$  on $\gamma$, we get a new closed surface $\hat{\Sigma}$ with two circles at the boundary. Let $\nu:\hat{\Sigma}\to\Sigma$ be the gluing map. 
The flow $\Phi_{\gamma}^t(P,\boF)$ is obtained as a quotient of the form $\nu^*(P,\boF)/\sim_t$.

To give a precise formula for $\sim_t$, we orient $\gamma$ and take a point $p$ on $\pi^{-1}(\gamma)$. The holonomy along $\gamma$ in the positive direction sends $p$ to $p.g$. Let $\gamma^+$ (resp. $\gamma^-$) be the component of $\partial\hat{\Sigma}$ which respect (resp. do not respect) the orientation of $\gamma$. Let $p^+$, $p^-$ be the preimages of $p$ in the corresponding fibers. Then,  by  definition $p^+.\exp(\frac{t\xi}{\sqrt{2}||\xi||})\sim_t p^-$ where $\xi$ is the unique element of $\lu$ with $||\xi||<2\sqrt{2}\pi$ such that $g=\exp(\xi)$.
We claim that there is a unique isomorphism of flat $\su$-bundles $\sim_t:\pi^{-1}(\gamma^+)\to \pi^{-1}(\gamma^-)$ which extends the previous formula.

We obtain this description easily from the previous one by integrating the connection $A$ in directions transverse to $\gamma$.

\subsection{Global description of the moduli space}\label{integrable}

Given a closed surface of genus $g>1$, the maximal number of disjoint curve is $3g-3$.
Let $(\gamma_i)_{i\in I}$ be such a family. It decomposes the surface into pairs of pants in the sense that the complement of the curves $\gamma_i$ is a disjoint union of $2g-2$ discs with two holes. It is convenient to construct from this decomposition a trivalent graph $\Gamma$. The set of vertices denoted by $V(\Gamma)$ corresponds to pair of pants and edges to curves. An edge is incident to a vertex if the corresponding curve bounds the corresponding pair of pants. An example is shown in Figure \ref{bretzel} for $g=2$.

\begin{figure}[h]\label{bretzel}
\begin{center}
\includegraphics[width=10cm]{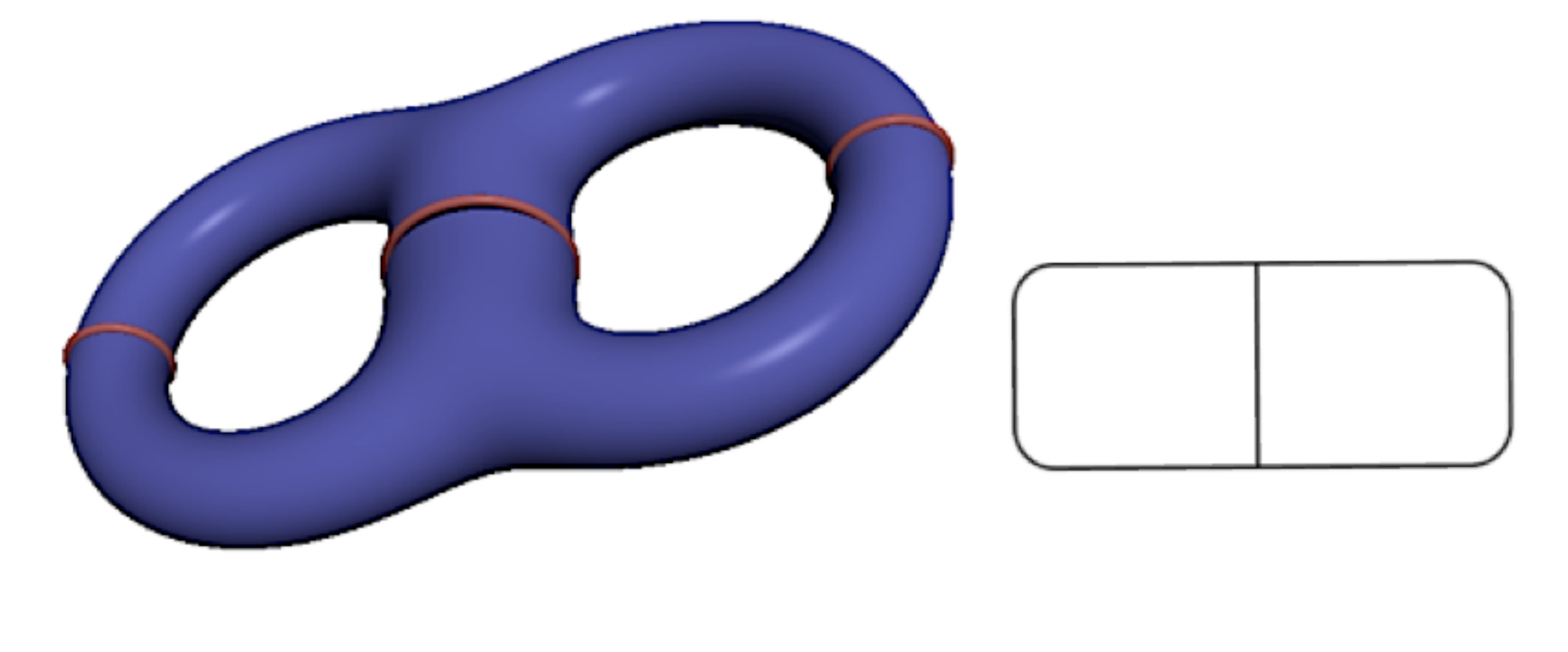}
\caption{Pants decomposition of a genus 2 surface}
\end{center}
\end{figure}

Consider the map $h:\boM(\Sigma)\to [0,\pi]^{I}$ given by $\rho\mapsto (h_{\gamma_i}(\rho))_{i\in I}$. This is an integrable system in the sense that it is a maximal set of Poisson commuting functions. We sum up the properties of this map in the following proposition.

\begin{theorem}
The image of $h$ is the polyhedron $\Delta$ consisting of the $(\alpha_i)_{i\in I}$ such that for any trivalent vertex $v$ of $\Gamma$ the following relation holds:
$$|\alpha_i-\alpha_j|\le \alpha_k\le \min(\alpha_i+\alpha_j,2\pi-\alpha_i-\alpha_j)$$
if $i,j,k$ are incident to $v\in V(\Gamma)$. 
Set $\boM^{\circ}(\Sigma)=h^{-1}( (0,\pi)^I)$. Then, the flows $\Phi_{\gamma_i}^t$ commute on $\boM^{\circ}(\Sigma)$ and cover the fibers of $h$.
\end{theorem}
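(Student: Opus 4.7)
The plan is to handle the two assertions separately, using the pants decomposition together with Proposition \ref{pants} for the image statement, and the geometric interpretation of the twist flow from the previous subsection for the integrable-system statement.

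For the image I would first check $h(\boM(\Sigma))\subseteq\Delta$ pair-of-pants by pair-of-pants: for each trivalent vertex $v\in V(\Gamma)$ with incident edges $i,j,k$, the fundamental group of the corresponding pair of pants $P_v$ is free on two generators and its moduli space is described by Proposition \ref{pants}. Restricting any $\rho\in\boR(\Sigma)$ to $\pi_1(P_v)$ thus forces the triple $(\alpha_i,\alpha_j,\alpha_k)$ to satisfy the tetrahedral inequalities, and these are exactly the conditions defining $\Delta$. For surjectivity I reverse the procedure: given $(\alpha_i)\in\Delta$, Proposition \ref{pants} yields, for each $P_v$, a representation of $\pi_1(P_v)$ with the prescribed boundary angles. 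On each cut curve $\gamma_i$ the two sides produce holonomies that are conjugate in $\su$ (they share the same angle $\alpha_i$), so one may conjugate one side to match the other and then glue by Van Kampen into a representation $\rho\in\boR(\Sigma)$ with $h(\rho)=(\alpha_i)$.

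For the commutation of the flows I use the De Rham description of Proposition \ref{flow}: after normalizing along $\gamma_i$, the hamiltonian vector field $X_{\gamma_i}$ is represented by a 1-form supported in an arbitrarily thin annular neighborhood of $\gamma_i$. Since the curves $\gamma_i$ are pairwise disjoint, these representatives can be chosen with disjoint supports, so
$$\{h_{\gamma_i},h_{\gamma_j}\}=\omega(X_{\gamma_i},X_{\gamma_j})=\int_{\Sigma}\langle X_{\gamma_i}\wedge X_{\gamma_j}\rangle=0,$$
and the flows commute on $\boM^{\circ}(\Sigma)$.

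To show that the joint flow $\prod_i\Phi_{\gamma_i}^{t_i}$ sweeps out the fibers of $h$, I would take $\rho,\rho'$ in a common fiber and again cut $\Sigma$ along the $\gamma_i$'s into pairs of pants $P_v$. By Proposition \ref{pants}, on each $P_v$ the restriction of a representation is determined up to conjugation by its three boundary angles; since $\rho$ and $\rho'$ share these angles, they are conjugate on every $P_v$. The only remaining datum is the intertwiner on each $\gamma_i$, i.e.\ an element of the centralizer of $\rho(\gamma_i)$, which is a circle group thanks to the hypothesis $h(\rho)\in(0,\pi)^I$. But this is precisely the freedom realized by the twist flow $\Phi_{\gamma_i}^t$ through the gluing description $\nu^*(P,\boF)/\sim_t$ recalled earlier. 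Composing the appropriate twists brings $\rho'$ to $\rho$, and a dimension count ($\dim\boM^{\reg}(\Sigma)=6g-6=2|I|$) confirms that these orbits fill out the fibers.

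The main obstacle I expect is the bookkeeping in this last step: after matching $\rho$ and $\rho'$ on each pair of pants by conjugation, one must verify that the residual discrepancies along the different $\gamma_i$'s are independent and that each lives in the one-parameter family generated by $\Phi_{\gamma_i}^t$. The disjointness of supports established for commutativity makes the independence precise, while the explicit twist interpretation handles the one-parameter matching; care is needed to keep track of the period ($4\pi$ for non-separating and $2\pi$ for separating curves), so that one really argues with the full orbit of the joint flow rather than with a single fundamental domain.
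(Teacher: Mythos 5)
Your proposal follows essentially the same route as the paper: restriction to pairs of pants plus Proposition \ref{pants} for the image, gluing flat bundles for surjectivity, and the twist-flow description for commutation and transitivity on fibers. The only difference is one of detail — you spell out the vanishing Poisson bracket via disjoint De Rham supports and the centralizer/intertwiner bookkeeping, whereas the paper asserts these steps "by construction" — and those additional details are correct.
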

\begin{proof}
Let $v$ be a vertex of $\Gamma$ and $\rho$ an element of $\boM(\Sigma)$. Then, restricting $\rho$ to the pair of pants $P_v$ encoded by $v\in V(\Gamma)$, we obtain an element of $\boM(P_v)$. Let $i,j,k$ be the edges incident to $v$: then $\alpha_i,\alpha_j,\alpha_k$ are the angles of $\rho$ on the boundary components of $P_v$. Proposition \ref{pants} tells us that they need to satisfy the inequalities of the theorem. This explains why the image of $h$ is in $\Delta$. Let us give a short explanation of the remaining part. Consider an element $\alpha$ in $\Delta$. Then, we know that there exist corresponding representations in the pants $P_v$ for all $v$, unique up to conjugacy. One can realize these representations as flat bundles. For each edge in $\Gamma$, we glue the corresponding boundary curves. The holonomy of the flat bundles are conjugated, proving that one can glue them into a flat bundle on $\Sigma$. This proves the surjectivity of $h$.
In the case all these holonomies are non central, then all possible ways of gluing these bundles are described by the Hamiltonian flow of the corresponding angle function. By construction these flows commute and cover the fibers of $h$.
\end{proof}

Let us describe more precisely the fiber of $h$. Fix an element $\rho$ of $\boM^{\circ}(\Sigma)$ and set $\alpha=h(\rho)$. The joint Hamiltonian flow of the functions $(h_{\gamma_i})$ give an action of $\R^I$ on the fibre $h^{-1}(\alpha)$ by the formula $t.\rho=\Phi_{\gamma_1}^{t_1}\cdots \Phi_{\gamma_N}^{t_N}(\rho)$ for any numbering of the elements of $I$. The kernel of this action is precisely described by the lattice $4\pi\Lambda\subset\R^I$ where we set:

$$\Lambda=\text{Vect}_{\Z}\{e_i, e_v, i\in I, v\in V(\Gamma)\}$$
In this formula $e_i$ is the basis element with coordinate $(\delta_{ij})$ whereas we set $e_v=(e_i+e_j+e_k)/2$ where $i,j,k$ are the edges incident to $v$.

We already showed  that the flows $\Phi_{\gamma}$ are $4\pi$-periodic, which explains why $e_i$ belongs to $\Lambda$ for all $i$. We prove in the same way that $4\pi e_v$ is in the kernel of the action: let $\gamma_i, \gamma_j,\gamma_k$ be three curves bounding $P_v$. Let $A$ be a flat connection representing $\rho$, normalized in the neighborhood of the three curves. Then, after applying the three flows during a time $2\pi$, we obtain a gauge equivalent connection where the gauge element is equal to -1 in the interior of $P_v$, 1 in the exterior and is given in the standard neighborhoods of the three curves by the same formulas as in Proposition \ref{flow}. This shows that $4\pi\Lambda$ belongs to the kernel of the action, we refer to \cite{jw3} for the proof that these lattices are actually equal.
\subsection{Some applications}
There are plenty of applications as this description is very precise for geometric and symplectic aspects. As an example, let us describe the symplectic structure in this setting and give a formula for the symplectic volume of $\boM(\Sigma)$.

\begin{proposition}
Let $s:\inter(\Delta)\to\boM^{\circ}$ be a Lagrangian section of $h$ over the interior of the polyhedron $\Delta$. The map $\Phi:\inter(\Delta)\times\R^I/\Lambda\to \boM^{\circ}$ defined by $\Phi(\alpha,t)=t.s(\alpha)$ is a diffeomorphism on $h^{-1}(\inter(\Delta))$ and we have
$\Phi^*\omega=\sum_i \dd\alpha_i\wedge \dd t_i$.
\end{proposition}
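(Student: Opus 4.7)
The plan is to first establish that $\Phi$ is a bijective smooth map with smooth inverse, then verify the formula for $\Phi^*\omega$ by evaluating on the coordinate vector fields $\partial/\partial \alpha_i$ and $\partial/\partial t_j$. The key inputs are the theorem from Section \ref{integrable} (transitivity and identification of the kernel of the $\R^I$-action) together with three elementary facts about the integrable system: the $h_{\gamma_i}$ Poisson commute, their Hamiltonian flows are symplectic, and $s$ is Lagrangian.

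For the diffeomorphism statement, the theorem of Section \ref{integrable} tells us that on the interior of $\Delta$ the joint Hamiltonian flow $t\cdot\rho = \Phi_{\gamma_1}^{t_1}\cdots\Phi_{\gamma_N}^{t_N}(\rho)$ gives an action of $\R^I$ on fibers of $h$ which is transitive and whose kernel is the stated lattice. Consequently the induced action of $\R^I/\Lambda$ on each fiber is free and transitive, so for any section $s$ of $h$, the map $(\alpha,t)\mapsto t\cdot s(\alpha)$ is bijective onto $h^{-1}(\inter\Delta)$. Smoothness of $\Phi$ and of its inverse follows from smoothness of the Hamiltonian flows, smoothness of $s$, and the constant rank of $h$ on $\boM^{\circ}$.

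For the symplectic formula, write $\Phi(\alpha,t)=\Phi_t(s(\alpha))$ where $\Phi_t$ is the commuting composition of time-$t_i$ flows. By construction $d\Phi(\partial/\partial t_j) = X_{\gamma_j}$ and $d\Phi(\partial/\partial\alpha_i) = d\Phi_t(ds(\partial/\partial\alpha_i))$. The three pieces of $\Phi^*\omega$ then evaluate as follows. First, $\omega(X_{\gamma_i},X_{\gamma_j}) = \{h_{\gamma_i},h_{\gamma_j}\}$; since each $h_{\gamma_j}$ is a function of the fiber coordinates only and $X_{\gamma_i}$ is tangent to fibers, $X_{\gamma_i}(h_{\gamma_j})=0$, so this Poisson bracket vanishes. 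Second, using that $\Phi_t$ preserves the $h_{\gamma_j}$ (same reason),
\begin{equation*}
\omega\bigl(d\Phi_t(ds\,\partial_{\alpha_i}),X_{\gamma_j}\bigr) = dh_{\gamma_j}\bigl(d\Phi_t(ds\,\partial_{\alpha_i})\bigr) = d(h_{\gamma_j}\circ s)(\partial_{\alpha_i}) = \delta_{ij},
\end{equation*}
since $h\circ s=\id$. Third, because Hamiltonian flows are symplectomorphisms,
\begin{equation*}
\omega\bigl(d\Phi_t(ds\,\partial_{\alpha_i}),\,d\Phi_t(ds\,\partial_{\alpha_j})\bigr) = (s^*\omega)(\partial_{\alpha_i},\partial_{\alpha_j}) = 0
\end{equation*}
by the hypothesis that $s$ is Lagrangian. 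Assembling the three identities gives $\Phi^*\omega = \sum_i d\alpha_i\wedge dt_i$.

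No step is really an obstacle once the theorem of Section \ref{integrable} and the existence of the Lagrangian section are granted; the only point that requires a moment of care is arguing that $\Phi^*\omega(\partial_{\alpha_i},\partial_{\alpha_j})$ is independent of $t$, which is exactly where the symplectic nature of the Hamiltonian flow $\Phi_t$ is used to reduce the evaluation to $t=0$, and the Lagrangian hypothesis on $s$ closes it out.
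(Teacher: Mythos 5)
The paper states this proposition without proof, so there is no "paper's own proof" to compare against; your argument is a self-contained and correct instance of the standard Arnold--Liouville action--angle computation, which is surely what the author had in mind. The three-part decomposition of $\Phi^*\omega$ on coordinate vector fields, with the $t$-$t$ term handled by Poisson commutativity, the mixed term by $h\circ s=\mathrm{id}$ together with $h$-invariance of the flow, and the $\alpha$-$\alpha$ term by pushing through the symplectomorphism $\Phi_t$ and invoking the Lagrangian hypothesis on $s$, is exactly the right structure, and you correctly observe that this last reduction to $t=0$ is the step where the Lagrangian hypothesis is indispensable. For the diffeomorphism part, it would be slightly cleaner to observe that once $\Phi^*\omega$ is shown to be nondegenerate, $d\Phi$ is everywhere invertible, so $\Phi$ is a local diffeomorphism, and together with the bijectivity coming from the free transitive fiber action this gives the global diffeomorphism; your appeal to "constant rank of $h$" gestures at this but the symplectic nondegeneracy is the sharpest way to close it. Two cosmetic points: with the paper's convention $i_{X_\gamma}\omega=\dd h_\gamma$ one has $\omega(V,X_{\gamma_j})=-\dd h_{\gamma_j}(V)$, so your mixed term should carry a minus sign (giving $\Phi^*\omega=\sum \dd t_i\wedge\dd\alpha_i$); this is a sign-convention ambiguity already present in the proposition's statement and does not affect the substance. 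Also note that the text of Section \ref{integrable} identifies the period lattice as $4\pi\Lambda$ rather than $\Lambda$, so the proposition's domain is presumably meant to be $\R^I/4\pi\Lambda$; you implicitly use the correct lattice when invoking free transitivity.
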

One deduce from this formula that the volume of $\boM(\Sigma)$ is equal to the volume of the dense open subset $h^{-1}(\inter(\Delta))$ which is equal to $\vol(\R^I/4\pi\Lambda)\vol(\Delta)$.

Moreover, we have $\vol(\R^I/4\pi\Lambda)=\vol(\R^I/4\pi\Z^I)/[\Lambda,\Z^I]$ which is finally equal to $(4\pi)^{3g-3}/[\Lambda,\Z^I]$.
To compute the index of $\Z^I$ in $\Lambda$ we notice that it is equal to the dimension of $C^1(\Gamma,\Z_2)$ divided by coboundaries. We find $[\Lambda,\Z^I]=\dim H^1(\Gamma,\Z_2)=2^g$. Hence
$$\vol(\boM(\Sigma))=(2\pi)^{3g-3} 2^{2g-3}\vol(\Delta).$$

\section{Introduction to geometric quantization}
In this section, we introduce some basic objects of geometric quantization. It is a procedure which associates to a symplectic manifold $M$ with extra structure a vector space $Q(M)$ called "quantization of $M$". By construction, some functions on $M$ act on $Q(M)$ with commutation relations prescribed by the Poisson bracket. Of course a good example to keep in mind is $T^*\R^n$ whose quantization is $L^2(\R^n)$ and where position, momentum and Hamiltonian operators are quantization of the coordinates and the energy. In full generality, our construction is naive and not well motivated but produces at least vector spaces and operators. We compute them in the case of moduli space and describe the so-called Bohr-Sommerfeld leaves which coincide with the spectrum of curve operators in Chern-Simons topological quantum field theory as initiated by Witten in \cite{witten}. A detailed introduction to geometric quantization can be found in \cite{gs,weinstein}.
The computation of Bohr-Sommerfeld fibers was done in \cite{jw1}. In these notes, we obtain it in a more direct way and take into account the "metaplectic correction".

\subsection{Spin structures}\label{spin}
Recall that for all $n$, there is a group Pin$(n)$ sitting in the following exact sequence
$$1\to\Z_2 \to \text{Pin}(n)\to \text{O}(n)\to 1.$$
This extension is caracterized by the two following features: over SO$(n)$, it is the unique 2-fold covering (universal for $n>2$) and any lift in Pin$(n)$ of a reflection in 0$(n)$ has order 2.
As GL$(n)$ retracts on O$(n)$ there is a unique group $\tgl(n)$ sitting in the exact sequence
$$1\to\Z_2 \to \tgl(n)\to \text{GL}(n)\to 1.$$
and which is isomorphic to Pin$(n)$ when restricted to O$(n)$

On a vector space $V$ of dimension $n$, we denote by $\boR(V)$ the set of basis of $V$. It is an homogeneous space over GL$(n)$. We call spin structure on $V$ a set $\tilde{\boR}(V)$ with a free transitive action of $\tgl(n)$ and a map $p:\tilde{\boR}(V)\to\boR(V)$ intertwining the actions of $\tgl(n)$ and GL$(n)$.

Spin structures on $V$ form a category Sp$(V)$ where all objects are isomorphic with precisely two isomorphisms. This category is equivalent to the category with one object and automorphism group $\Z_2$.

Given two vector spaces $V,W$, there is a functor $F$ from $\sp(V)\times\sp(W)$ to $\sp(V\oplus W)$ sending $(\tilde{\boR}(V),\tilde{\boR}(W))$ to $\tgl(n+m)\times\tilde{\boR}(V)\times\tilde{\boR}(W)/\sim$. The equivalence relation is generated by $(hg,hs_v,s_w)\sim (g,s_v,s_w)$ for $h\in \tgl(n)$ and  $(hg,s_v,hs_w)\sim (g,s_v,s_w)$ for $h\in \tgl(m)$.
This functor $F$ is equivalent to the functor trivial on objects and sending $\Z_2\times \Z_2$ to $\Z_2$ via the addition.

Let us give two generalizations of this construction: in the first one, we consider an exact sequence $0\to U\to V\to W\to 0$. Choosing a section $s:W\to V$ gives an isomorphism from $U\oplus W$ to $V$ sending $(u,w)$ to $u+s(w)$, and hence as before a functor $F_s:\sp(U)\times\sp(W)\to \sp(V)$. This functor depends on $s$ but only up to a unique natural transformation, so this dependance is not relevant for categorical purposes.

Our last generalization consists in a comparison between spin structures on a complex and on its cohomology, equivalent to Lemma \ref{lem-torsion}.
\begin{lemma} \label{lem-spin}
Given a finite dimensional complex $C^*=C^0\to \cdots \to C^n$, we define $\sp(C^*)=\sp(\bigoplus_i C^i)$, and $\sp(H^*)=\sp(\bigoplus_i H^i(C^*))$. There is an equivalence of categories $\sp(C^*)\to \sp(H^*)$ well-defined up to natural transformation.
\end{lemma}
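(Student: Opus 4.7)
The plan is to categorify the proof of Lemma \ref{lem-torsion}: every short exact sequence and every cancellation of determinant lines is replaced by its spin-structure analogue, and the final output will be an equivalence of categories well-defined up to natural transformation rather than a canonical isomorphism of lines.

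First, for each $i$, form the two short exact sequences
\[0 \to Z^i \to C^i \to B^i \to 0 \qquad\text{and}\qquad 0 \to B^{i-1} \to Z^i \to H^i \to 0,\]
with $Z^i=\ker\dd^i$ and $B^i=\im\dd^{i-1}$. Choosing sections of the quotient maps and applying the section-dependent generalization $F_s$ of the direct-sum functor $F$ introduced just before the lemma yields equivalences $\sp(Z^i)\times\sp(B^i)\to\sp(C^i)$ and $\sp(B^{i-1})\times\sp(H^i)\to\sp(Z^i)$, each independent of the chosen section up to a unique natural transformation. Composing them gives an equivalence
\[\sp(B^{i-1})\times\sp(H^i)\times\sp(B^i)\longrightarrow\sp(C^i).\]

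Next, taking the product over $i$ and concatenating via $F$ produces an equivalence
\[\Bigl(\prod_i\sp(B^{i-1})\Bigr)\times\sp(H^*)\times\Bigl(\prod_i\sp(B^i)\Bigr)\longrightarrow\sp(C^*).\]
Each factor $\sp(B^i)$ appears twice on the left-hand side: once from the decomposition of $\sp(C^i)$ and once (after the index shift $i\mapsto i+1$) from the second sequence at level $i+1$. Now a $\Z_2$-torsor $T$ tensored with itself admits a canonical trivialization given by the diagonal class, which is well defined because the defining relation $(t,t)\sim(-t,-t)$ makes the choice of $t$ irrelevant. Pairing the two copies of each $\sp(B^i)$ via this canonical trivialization cancels them, leaving an equivalence $\sp(H^*)\to\sp(C^*)$ whose inverse is the equivalence we sought.

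The main obstacle is the bookkeeping of signs. Exactly as in Lemma \ref{lem-torsion} the cancellation of $\det B^i$ against $(\det B^i)^{-1}$ used the parity conventions and the ordering of tensor factors, here the diagonal trivialization and the functor $F$ each carry a $\Z_2$-ambiguity whose combined contribution must vanish. Once the grading convention of the introductory paragraph of the Reidemeister torsion section is adopted throughout, every step is canonical up to unique natural transformation, which yields the claimed well-definedness of the final equivalence up to natural transformation.
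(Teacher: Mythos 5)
Your proposal is correct and follows essentially the same route as the paper: the same two short exact sequences defining $Z^i$ and $B^i$, the same use of the functors $F_s$ associated to short exact sequences, and the same observation that each $\sp(B^i)$ occurs twice so that the dependence on the intermediate choices cancels. The only difference is cosmetic — you phrase the cancellation as a "diagonal trivialization" of a doubled $\Z_2$-torsor while the paper simply notes that applying a nontrivial automorphism of an object of $\sp(B^i)$ acts twice and hence trivially — but the underlying argument is identical.
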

\begin{proof}
In the settings of the proof of Lemma \ref{lem-torsion}, choose objects in $\sp(H^i)$ and $\sp(B^i)$. The exact sequence $0\to B^{i-1}\to Z^i\to H^i\to 0$ give a functor from $\sp(B^{i-1})\times\sp(H^i)$ to $\sp(Z^i)$ whereas the exact sequence $0\to Z^i\to C^i\to B^i\to 0$ gives a functor from $\sp(Z^i)\times\sp(B^i)$ to $\sp(C^i)$.
One obtains finally an element of $\prod_i \sp(C^i)$ which we send to $\sp(\bigoplus_i C^i)$.
Applying a non trivial automorphism on the element of $\sp(B^i)$ do not change the result as this element appears twice in the result. This shows the lemma as all functors are well-defined up to natural transformation.
\end{proof}

\begin{definition}
A spin structure on a manifold $M$ of dimension $n$ is a left $\tgl(n)$-principal bundle $\tilde{\boR}(M)$ on $M$ with a bundle map $\pi:\tilde{\boR}(M)\to\boR(M)$, the GL$(n)$-bundle of framings on $M$ which intertwines the actions of $\tgl(n)$ and GL$(n)$.
\end{definition}
In short, it is a smooth collection of spin structures for all tangent spaces of $M$. Two spin structures are isomorphic if there is an isomorphism of $\tgl(n)$-bundles commuting with the projections on the framing bundle. We denote by $\sp(M)$ the category of spin structures on a manifold $M$.

The simplest example is the circle, on which there are two isomorphism classes of spin structure that we obtain in the following way.
\begin{enumerate}
\item Identify all tangent spaces of $\R/\Z$ with $\R$ and take the same spin structure on this "constant" tangent space.
\item Consider $S^1\subset \R^2$ and consider the induced spin structure, using the trivial spin structure in $\R^2$ as above and the equivalence $\sp(TS^1)\times \sp(N)\simeq \sp(\R^2|_{S^1})$ where $N$ is the (trivial) normal bundle of $S^1$ in $\R^2$.
\end{enumerate}
In the first case, the bundle $\tilde{\boR}(S^1)$ is a trivial covering of $\boR(S^1)$ whereas the covering is non trivial in the second case. Much more generally, a manifold $M$ admits a spin structure if and  only if its second Stiefel-Whitney class $w_2(M)$ vanishes and in that case, isomorphism classes of spin structures form an affine space directed by $H^1(M,\Z_2)$.

In the case of moduli spaces of closed surfaces, Lemma \ref{lem-spin} gives us the following proposition.

\begin{proposition}
Let $\Sigma$ be a closed surface. There is a well-defined spin structure on $\boM^{\reg}(\Sigma,\su)$. More precisely, we define a functor $$F:\sp(H^*(\Sigma,\R)\otimes \g)\to \sp(\boM^{\reg}(\Sigma,\su).$$
\end{proposition}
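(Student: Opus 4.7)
My plan rests on the observation that, for a finite CW decomposition $W$ of $\Sigma$ together with chosen lifts of its cells to $\tl W$, the underlying graded vector space of the twisted cochain complex $C^*(W,\Ad_\rho)$ does not depend on $\rho$: it is canonically
$$\bigoplus_k C^k(W,\Ad_\rho)\;=\;\Bigl(\bigoplus_k C^k(W,\R)\Bigr)\otimes \g.$$
Only the differentials vary with $\rho$. In particular, since the definition of $\sp$ of a complex uses only its underlying graded vector space (Lemma~\ref{lem-spin}), the two spin categories coincide strictly: $\sp(C^*(W,\Ad_\rho))=\sp(C^*(W,\R)\otimes\g)$.

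Lemma~\ref{lem-spin} then produces two equivalences
$$\sp(H^*(\Sigma,\R)\otimes\g)\;\xleftarrow{\sim}\;\sp(C^*(W,\R)\otimes\g)\;=\;\sp(C^*(W,\Ad_\rho))\;\xrightarrow{\sim}\;\sp(H^*(\Sigma,\Ad_\rho)).$$
For an irreducible $\rho$ we have $H^0(\Sigma,\Ad_\rho)=H^2(\Sigma,\Ad_\rho)=0$, so the right hand side reduces to $\sp(H^1(\Sigma,\Ad_\rho))=\sp(T_{[\rho]}\boM^{\reg}(\Sigma))$. Composing (with the first equivalence inverted) yields, for each $[\rho]$, a functor $F_{[\rho]}:\sp(H^*(\Sigma,\R)\otimes\g)\to \sp(T_{[\rho]}\boM^{\reg}(\Sigma))$. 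To assemble these pointwise functors into a functor with values in $\sp(\boM^{\reg}(\Sigma))$, I would work on any contractible open set $U\subset\boM^{\reg}(\Sigma)$: Proposition~\ref{releve} provides a smooth family of flat connections $A_\tau$ on $U$, and Theorem~\ref{derham} identifies the fiberwise cohomologies smoothly with a fixed model. Since the whole construction uses only the constant graded vector space together with smoothly varying differentials, a fixed object of $\sp(H^*(\Sigma,\R)\otimes\g)$ determines a smooth $\tgl(6g-6)$-principal bundle on $U$. Local trivializations patch because the equivalences furnished by Lemma~\ref{lem-spin} are canonical up to the standard $\Z_2$ ambiguity, which can only get multiplied consistently on overlaps.

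The main obstacle is to show that the resulting spin structure does not depend on the CW decomposition $W$ or on the choice of lifts of cells. Following the template used for Reidemeister torsion, I would reduce to invariance under elementary cellular subdivisions and collapses. Each such move inserts into $C^*(W,\R)\otimes\g$ an acyclic subcomplex of the form $0\to \R\xrightarrow{\id}\R\to 0$ tensored with $\g$, which carries a preferred trivial spin structure; this preferred structure is absorbed identically by the two applications of Lemma~\ref{lem-spin} on either side of the equivalence, so $F_{[\rho]}$ is unchanged. Changing the lift of a cell by the action of some $\gamma\in\pi_1(\Sigma)$ multiplies the distinguished element $\lambda\in\Lambda^3\g$ by $\Ad_{\rho(\gamma)}\in \so$, whose determinant is $1$, and one checks that this leaves the functor $F_{[\rho]}$ invariant. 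Once these invariances are verified, the spin structure on $\boM^{\reg}(\Sigma)$ attached to a chosen object of $\sp(H^*(\Sigma,\R)\otimes\g)$ depends only on $\Sigma$, producing the desired functor $F$.
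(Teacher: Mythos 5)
Your core argument matches the paper's exactly: apply Lemma~\ref{lem-spin} to the twisted cochain complex, observe that $\sp(C^*(\Sigma,\Ad_\rho))$ is independent of $\rho$ since $\rho$ enters only through the differentials, and read off $\sp(H^1(\Sigma,\Ad_\rho))\simeq\sp(T_{[\rho]}\boM^{\reg}(\Sigma))$ for irreducible $\rho$, while the trivial representation gives $\sp(H^*(\Sigma,\R)\otimes\g)$. The material you add --- the smooth assembly into a $\tgl$-bundle via Proposition~\ref{releve} and Theorem~\ref{derham}, and the independence of the CW decomposition by reduction to elementary expansions and collapses --- is genuine content that the paper leaves implicit, and it is needed for the functor $F$ to be globally well defined.

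The one flaw is in your justification of invariance under change of cell lifts. Invoking ``the distinguished element $\lambda\in\Lambda^3\g$'' and the fact that $\Ad_{\rho(\gamma)}$ ``has determinant $1$'' transplants the Reidemeister torsion argument verbatim; but the spin construction chooses no volume element on $\g$, and having determinant one is not the criterion for a linear automorphism to act \emph{canonically} on a spin category --- an element of $\so$ always has two lifts to $\text{Spin}(3)$, so membership in $\so$ alone leaves a $\Z_2$ ambiguity. What actually makes the change of lift harmless is that $\Ad_{\rho(\gamma)}$ comes here with a \emph{preferred} lift to the spin cover, namely $\rho(\gamma)$ itself under the identification $\su\cong\text{Spin}(3)\subset\tgl(3)$. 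That canonical lift, not the determinant, is what identifies the pushed-forward spin structure with the original one without ambiguity and shows that $F_{[\rho]}$ does not depend on the chosen lifts. With that correction your well-definedness discussion is sound.
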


\begin{proof}
This is a direct consequence of Lemma \ref{lem-spin} as we have an equivalence of categories
$\sp(H^*(\Sigma,\Ad_{\rho}))\sim \sp(C^*(\Sigma,\Ad_{\rho}))$. The second category do not depend on $\rho$ since $\rho$ appears only in the differnetials. Hence, all these categories are identified together. In the case where $\rho$ is irreducible, the first category reduces to $\sp(H^1(\Sigma,\Ad_{\rho}))\sim \sp(T_{[\rho]}\boM(\Sigma))$. On the contrary, if $\rho$ is the trivial representation, the first category reduces to $\sp(H^*(\Sigma,\g))=\sp(H^*(\Sigma,\R)\otimes \g)$ hence proving the proposition.
\end{proof}

\subsection{Lagrangian foliations}

The main ingredient in the geometric quantization of symplectic manifolds $(M,\omega)$ (for real  polarizations) is a Lagrangian foliation, that is an integrable distribution of Lagrangian subspaces of $TM$. For our purposes,we will suppose that this foliation may have singularities and that its leaves are the regular fibers of a map $\pi:M\to B$ which is a submersion over a dense open subset of $B$. Asking that the fibers are Lagrangian is equivalent to asking that all functions on $M$ written as $f\circ\pi$ for $f:B\to \R$ Poisson commute.

Let us give a list of classical examples to keep in mind:
\begin{enumerate}
\item
Let $V$ be a symplectic vector (or affine) space, then any linear Lagrangian subspace $L$ of $V$ gives such a foliation and the fibration is given by the quotient $V\to V/L$.
\item
Given a manifold $M$, its cotangent space $T^*M$ is a symplectic manifold foliated by the individual cotangent spaces. The fibration we are looking at is the natural projection $\pi:T^*M\to M$. This model corresponds physically to the canonical quantization of $M$.
\item
Let $(V,\omega,q)$ be a symplectic plane with a positive quadratic form $q$. The level sets of $q$ give a foliation of $V$ minus its origin. The map $q:V\to \R^+$ is our desired fibration with $0$ as a singular fibre. This model is referred to as the harmonic oscillator.
\item
Let $(E,q)$ be an oriented 3-dimensional euclidian space. Let $\alpha$ be a positive number and $S=q^{-1}(\alpha^2)$ be the sphere of radius $\alpha$. Then the sphere $S$ inherits a symplectic structure where symplectic frames at $x$ are by definition couples $(v,w)$ tangent to $S$ such that the determinant of $(v,w,x)$ is $\alpha^2$. Any linear form $\lambda$ on $E$ restricts to a lagrangian fibration $S\to \R$ with two singularities, its maximum and minimum.
\end{enumerate}
The example we are interested in is $\boM(\Sigma,\su)$. Given a pants decomposition of $\Sigma$ with cutting curves $(\gamma_i)_{i\in I}$, we get a map $h:\boM(\Sigma)\to [0,\pi]^I$. We showed in the last section that this map takes its values in a polyhedron $\Delta\subset[0,\pi]^I$ and that it is a Lagrangian fibration over the interior of $\Delta$.

\subsection{Bohr-Sommerfeld leaves}
Let $(M,\omega)$ be a symplectic manifold. We will ask that there is a prequantum bundle $\boL$ over $M$, a spin structure $s\in \sp(M)$ and a lagrangian fibration $\pi:M\to B$ maybe with singularities. The first condition is a symplectic one: a prequantum bundle exists if and only if $\omega/2\pi$ is an integral class in $H^2(M,\R)$. The second is topological as a spin structure exists iff $w_2(M)=0$. In both cases, there is no unicity unless $H^1(M,\Z)=0$.
The geometric quantization process gives a way for constructing a Hilbert space from these data which is finite dimensional if $M$ is compact and on which functions factorizing with the projection $\pi:M\to B$ have a natural quantization as operators. Moreover, quantizations coming from different lagrangian foliations can be compared by using a pairing introduced by Blattner, Kostant and Sternberg.

\subsubsection{Half-form bundle and quantization}
Let $L$ be a Lagrangian submanifold of a symplectic manifold $(M,\omega)$ with spin structure $s$. We will define in this section a bundle $\det^{1/2} L$, square root of the line bundle of volume elements on $L$. We start with some preliminaries.

Consider on $\R^{2n}$ the symplectic form $\sum_i \dd x_i\wedge \dd y_i$ where we used standard coordinates $(x_1,\ldots,x_n,y_1,\ldots,y_n)$. Any basis $(e_1,\ldots,e_n)$ of $\R^n$ can be extended to a unique symplectic basis $(e_1,\ldots,e_n,f_1,\ldots,f_n)$ of $\R^{2n}$ adapted to the decomposition $\R^{2n}=\R^n\oplus\R^n$. This gives a map from GL$(n)$ to GL$(2n)$. Pulling back the bundle $\tgl(2n)\to$GL$(2n)$, we get a new group $\hat{\text{GL}}(n)$ with a projection to GL$(n)$. This group is called the metalinear group and has the following down to earth description: it is isomorphic as a group to the direct product GL$^+(n)\times \Z_4$ with the projection to GL$(n)$ given by the map $(A,x)\to (-1)^x A$. There is an important morphism $\det^{1/2}:\hat{\text{GL}}(n)\to\C$ defined by $\det^{1/2}(A,x)=\sqrt{\det(A)}i^x$. Its square is the pull-back of the usual map $\det:\text{GL}(n)\to\R$.

Let $V$ be any symplectic vector space $V$ with a spin structure $\tilde{\boR}(V)$ and suppose it is symplectomorphic to $L\oplus L^*$ (such a symplectomorphism is equally described by a pair of transverse Lagrangians $L$ and $L'$, where $L'$ is the image of $L^*$). We can define a complex line $\det^{1/2}(L)$ in the following way. Pulling-back the spin structure of $V$ to $L$ as above, we get a metalinear structure on $L$ i.e a set $\hat{\boR}(L)$ homogeneous under $\hat{\text{GL}}(n)$. We define $\det^{1/2}(L)=\hat{\boR}(L)\times \C/\sim$ where $(hs,z)\sim(s,\det^{1/2}(h)z)$ for any $h$ in $\hat{\text{GL}}(n)$. As expected we have a natural isomorphism $\det^{1/2}(L)^{\otimes 2}\simeq \det(L)$.

We are ready to define the half-form bundle of a Lagrangian submanifold in a spin symplectic (called metaplectic) manifold $(M,\omega,s)$. Let $L$ be such a submanifold, and $L'$ be a Lagrangian subbundle of $TM|_L$ transverse to $TL$. Topologically, there is neither obstruction nor choice to find such a subbundle. Applying the preceding construction to all tangent spaces $TL_l$ for $l\in L$ give the line bundle $\det^{1/2}(L)$ that we are looking for. We will often denote it by $\delta$ for short.

In the case where $L$ is a regular fibre of a lagrangian fibration $\pi:M\to B$, there is an isomorphism between the cotangent space $T^*L$ at any point $l\in L$ and the tangent space $TB$ at $\pi(l)$. In other words, the tangent and cotangent spaces of lagrangian fibres are naturally trivialized. This isomorphism comes from the identification of $T^*L$ with the normal bundle $NL$ via $\omega$ and the derivative of $\pi$. Thanks to this isomorphim, one can give a flat structure to $TL$ and to all its associated bundles like $\det(L)$ or $\det^{1/2}(L)$. This explains why the half-form bundle $\delta$ is indeed a flat hermitian bundle on $L$. The underlying connection on it is often called the Bott connection.

\begin{definition}
Let $(M,\omega,\boL,s)$ be an enriched symplectic manifold and let $\pi:M\to B$ be a Lagrangian fibration. Then we will say that a fibre $L$ of $\pi$ is a Bohr-Sommerfeld fibre if there are non-trivial covariant flat sections of $\boL\otimes \delta$ over $L$. The space of all these sections will be denoted by $\boH(L,\boL\otimes\delta)$.
\end{definition}
If $L$ is connected, then $\boH=\boH(L,\boL\otimes\delta)$ is 1-dimensional. If moreover $L$ is compact, then there is an hermitian structure on the dual $\boH^*=\boH(L,\boL^*\otimes \delta^*)$ inducing one on $\boH$. Indeed, if $\lambda$ is a section of $\delta^*=\det^{-1/2}(L)$ then $\lambda\overline{\lambda}$ is a non negative density on $L$. We set $|s\otimes \lambda|^2=\int_L |s|^2 \lambda\overline{\lambda}$.

 Let $BS$ be the subset of $B$ parametrizing Bohr-Sommerfeld fibres. Then to each $b$ in $BS$ we have a complex line $\boH(\pi^{-1}(b),\boL\otimes \delta)$ (provided that $\pi^{-1}(b)$ is connected). We will denote by $\boH(M,\boL\otimes\delta)$ the space of sections of this line bundle over $BS$ - roughly speaking, this is the space of sections of $\boL\otimes\delta$ over $M$ covariantly constant along the fibres of $\pi$.
This rather ad hoc definition is motivated by the fact that $\boH(M,\boL\otimes\delta)$ has a natural inner product defined by $\langle s_1\otimes \lambda_1,s_2\otimes\lambda_2\rangle=\int_{BS}\langle s_1,s_2\rangle \lambda_1\overline{\lambda_2}$. In this formula, $BS$ is the subset of $B$ describing Bohr-Sommerfeld fibres supposed to be a codimension 0 submanifold of $B$ whereas $\lambda_1\overline{\lambda_2}$ is a density on $BS$ and then is ready to be integrated. There is a family of possibilities from compact fibres and discrete Boh-Sommerfeld set to non compact fibres and codimension 0 Bohr-Sommerfeld set. This is best understood with examples.

\subsubsection{Standard examples}
\begin{enumerate}
\item
If $(V,\omega)$ is a symplectic vector space, we construct a prequantum bundle $\boL$ by taking the trivial bundle $V\times \C$ with connection $d-\lambda$ where $\lambda$ is a 1-form on $V$ such that $\dd \lambda=\frac{1}{2\pi}\omega$.
Let $L$ be a linear lagrangian in $V$ and take a spin structure $\tilde{\boR}(V)$ on $V$. For auxilliary purposes, we also choose a Lagrangian $L'$ transverse to $L$. We explained in the last section that these data produce a complex line $\det^{1/2}(L)$ which is isomorphic to $\det^{-1/2}(L')$.
 Moreover, sections of $\boL$ constant along $L$ are determined by their restriction to $L'$: the quantization procedure reduces then to sections of $\det^{-1/2}(L')$ as all fibres are Bohr-Sommerfeld. This space is called the intrinsic Hilbert space of $L'$ as for any section $\lambda$ of it, the quantity $\lambda\ba{\lambda}$ can be integrated and the space of integrable sections is an Hilbert space. Certainly, different choices of $L'$ give isomorphic spaces although the isomorphism has to be computed using parallel transport along $L$.

\item
If $M$ is any manifold, we can construct a prequantum bundle $\boL$ on $T^*M$ by taking the product $T^*M\times\C$ with connection $d-\frac{1}{2\pi}\lambda$ where $\lambda$ is the Liouville 1-form. A spin structure on $T^*M$ induces a metalinear structure on $M$ and finally a square root $\delta$ of the line bundle $\det(M)^*$. The quantization associated to it will be the set of sections of $\delta$ over $M$, that is the intrinsic Hilbert space of $M$ (again, all fibres are Bohr-Sommerfeld).

\item
In the case of the harmonic oscillator $(V,\omega,q)$, we consider any prequantum bundle $\boL$ over $V$ (constructed as before) and a fixed spin structure on $V$. The fibres are the circles $q^{-1}(\alpha)$ for $\alpha>0$. The holonomy of $\boL$ along this leaf is $e^{i A}$ where $A$ is the symplectic area enclosed by $q^{-1}(\alpha)$. The half-form bundle $\delta$ restricts to each leaf to a flat non trivial line bundle. Its holonomy is then $-1$. Finally, Bohr-Sommerfeld fibres correspond to the values $\alpha$ such that $-e^{iA}=1$ where $A=c \alpha$ and $c$ is the volume of the disc $q^{-1}([0,1])$. This forces $\alpha$ to be of the form $k/c$ where $k$ an odd integer, correspondingly to the spectrum of the harmonic oscillator.
\item

Finally, the case of the sphere will be the most interesting one for us as it is the only compact example. Recall that we saw the sphere as the level set $q^{-1}(\alpha^2)$ and our normalization is such that its symplectic volume is $2\pi\alpha$. Then, we will be able to find a prequantum bundle $\boL$ if and only if $\alpha$ is an integer. Let $\lambda:V\to \R$ be a (unit) linear form and choose a spin structure $s$ on $S$ (unique up to homotopy). Regular fibres of $\lambda$ are circles on which $\delta$ is as before a non trivial flat bundle. The holonomy of $\boL$ along a fiber $\lambda^{-1}(l)$ is $e^{iA}$ where $A$ is the symplectic area enclosed by the circle. The quantity $A/2\pi$ is an integer if and only if $l$ is an integer satisfying $|l|\le \alpha$ and $l=\alpha\mod 2$. Hence, Bohr-Sommerfeld orbits correspond to integers $l$ satisfying $|l|\le \alpha$ and $l\ne \alpha \mod 2$.

\end{enumerate}

\subsubsection{The case of moduli spaces}

Consider the settings of Section \ref{genus}. As before $\Sigma$ will denote a closed surface of genus $g$ and the aim of this section is to investigate the quantization of $\boM^{\reg}(\Sigma,\su)$ with its symplectic form $\omega$ and Chern-Simons bundle $\boL$. For more generality, we will consider an integer $K$ called the level and multiply the symplectic form by $K$. This new symplectic form admits $\boL^{\otimes K}$ as a prequantum bundle. The spin structure is the one defined in Section \ref{spin} and the lagrangian fibration comes from the map $h:\boM(\Sigma)\to[0,\pi]^I$ sending $[\rho]$ to $(h_{\gamma_i}(\rho)_{i\in I}$ where $I$ indexes a maximal system of cutting curves $(\gamma_i)_{i\in I}$.
We need now to determine which fibers are Bohr-Sommerfeld, and to do this we must compute the holonomy of $\boL^{\otimes K}$ and $\delta$ along the fibres which are tori of the form $\R^I/\Lambda$. Hence, it is sufficient to compute the holonomy of both bundles along the generators of $\Lambda$. We do this in the following two next propositions.

\begin{proposition}
Fix $i\in I$ and $(\alpha_i)$ in $\inter(\Delta)$. Pick $\rho\in \boM^{\circ}(\Sigma)$ such that $h(\rho)=(\alpha_i)$. Let $e_i$ and $e_v$ be the generators of $\Lambda$ described in Section \ref{integrable}.
\begin{itemize}
\item
The holonomy of $\boL^{\otimes K}$ along the path $e_i$ is $e^{-2iK\alpha_i}$.
\item
The holonomy along the path $e_v$ for edges $i,j,k$ incident to the same vertex $v$ is $e^{-iK(\alpha_i+\alpha_j+\alpha_k)}$.
\end{itemize}
\end{proposition}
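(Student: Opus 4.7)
The plan is to compute each holonomy by realizing the loop as an orbit of a Hamiltonian flow in $\boM^{\circ}(\Sigma)$, lifting the corresponding path of flat connections to the trivialization $L = \Omega^1_{\flat}(\Sigma,\g) \times \R/(2\pi\Z)$ of Section~\ref{prequantum}, and measuring the failure of the horizontal lift to close after passing to the gauge quotient $\boL = L/\Gamma$. A loop $A_s$, $s\in[0,T]$, with $A_T = A_0^g$ lifts horizontally from $(A_0,0)$ to $(A_0^g, K\,\Theta)$ in $L^{\otimes K}$ with $\Theta = \int_0^T \lambda_{A_s}(\dot A_s)\,ds$, so the holonomy of $\boL^{\otimes K}$ along the loop equals $\exp\bigl(iK(\Theta - c(A_0,g))\bigr)$ for the cocycle $c$ that encodes the gauge action on fibers. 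A useful preliminary observation is that for any $g:\Sigma\to\su$ the Wess--Zumino term $W(g)$ in $c(A,g)=\tfrac{1}{4\pi}\int_\Sigma\langle g^{-1}Ag\wedge g^{-1}\dd g\rangle-\tfrac{1}{2\pi}W(g)$ contributes trivially: since $\su=S^3$ is $2$-connected, any such $g$ is null-homotopic, so $W(g)\in 4\pi^2\Z$ and $W(g)/(2\pi)\in 2\pi\Z$ vanishes in $\R/(2\pi\Z)$. Thus all cocycle computations reduce to the collar integrals.

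For the loop $e_i$, I would invoke Lemma~\ref{normalize} to pick a flat connection $A$ representing $\rho$ normalized along $\gamma_i$, so that $\Phi^* A = \xi\,\dd t$ with $\|\xi\|/\sqrt 2 = \alpha_i$. Proposition~\ref{flow} gives the explicit Hamiltonian vector field $X_{\gamma_i}=\Phi_*\bigl(-\tfrac{\xi}{\sqrt 2\|\xi\|}\phi(s)\,\dd s\bigr)$ and the closing gauge element $g(t,s)=\exp\bigl(-\psi(s)\cdot\tfrac{2\sqrt 2\pi\xi}{\|\xi\|}\bigr)$, where $\psi(s)=\int_0^s\phi(u)\,\dd u$. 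Because the flow preserves the normalizing element $\xi$, the integrand $\lambda_{A_s}(X_{\gamma_i})$ is constant in $s$, and both $\Theta$ and $c(A,g)$ localize on the collar $\Phi(S^1\times[0,1])$. A direct computation using the normalized forms of $A$, $X_{\gamma_i}$, and $g^{-1}\dd g$, together with $\langle\xi,\xi/\|\xi\|\rangle=\|\xi\|$, yields collar contributions proportional to $\alpha_i$; combined with the appropriate sign dictated by the conventions of Section~\ref{prequantum}, these give $e^{-2iK\alpha_i}$.

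For the loop $e_v$ I would use the joint $2\pi$-time flow $\Phi^{2\pi}_{\gamma_i}\Phi^{2\pi}_{\gamma_j}\Phi^{2\pi}_{\gamma_k}$, which is a loop by the analysis preceding the theorem, together with the description given there of the closing gauge element $g$: it equals $-1$ in the interior of the pair of pants $P_v$, $+1$ on the complement (including the opposite pair of pants), and interpolates through the relevant one-parameter subgroup on each of the three collars with half of the exponent used for $e_i$. Both $\Theta$ and $c(A,g)$ decompose as sums of three independent contributions, one per collar; each contribution is the formal analogue of the $e_i$ computation with time $4\pi$ replaced by $2\pi$ and $\xi$ replaced by $\xi_i$, $\xi_j$, or $\xi_k$. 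Summing and simplifying yields $e^{-iK(\alpha_i+\alpha_j+\alpha_k)}$.

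The main obstacle is bookkeeping of signs and factors: one has to ensure that the formula $\exp(iK(\Theta-c))$ is applied consistently with the $\dd-\lambda$ convention of Section~\ref{prequantum} and with the right-action convention $(A,\theta)^g=(A^g,\theta+c(A,g))$, and that the $\|\xi\|$-vs-$\alpha$ conversions introduced by $h_\gamma=\|\xi\|/\sqrt 2$ track correctly through the collar integrals so that the expected factors $-2K\alpha_i$ and $-K(\alpha_i+\alpha_j+\alpha_k)$ appear. The vanishing of the Wess--Zumino term modulo $2\pi$ is not itself an obstacle but rather the structural simplification that makes the whole argument a local collar calculation.
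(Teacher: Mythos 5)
Your strategy coincides with the paper's: parametrize the loop by the Hamiltonian flow, compute the horizontal lift $\Theta=\int_0^T\lambda_{A_s}(\dot A_s)\,\dd s$ in the trivialization $L=\Omega^1_\flat(\Sigma,\g)\times\R/2\pi\Z$ using Lemma~\ref{normalize} and Proposition~\ref{flow}, localize to the collars, and account for the closing gauge element via the cocycle $c(A,g)$. For $e_v$ you replace $4\pi$ by $2\pi$ and sum three collar contributions, just as in the paper. That part of the outline is sound.

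However, your preliminary observation contains a genuine error. You claim that for \emph{any} $g:\Sigma\to\su$ the $2$-connectedness of $\su$ forces $W(g)\in 4\pi^2\Z$. This does not follow from null-homotopy alone. The term $W(g)$ is only well-defined modulo $4\pi^2\Z$ (different bounding extensions differ by $4\pi^2\cdot\deg$); its residue class is a nontrivial invariant of $g$. A concrete counterexample: take $\Sigma=S^2$ and $g:S^2\hookrightarrow S^3$ an equatorial embedding. This $g$ is certainly null-homotopic, yet extending over $D^3$ onto a hemisphere gives $W(g)=\tfrac12\int_{S^3}\chi=2\pi^2\notin 4\pi^2\Z$. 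So ``null-homotopic $\Rightarrow W(g)\in 4\pi^2\Z$'' is false in general, and the paper's own citation of \cite{kauf} for the torus case $W(g_{k,l})=0$ signals that this is something to be verified, not a formal consequence of $\pi_2(\su)=0$.

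The correct reason $W(g)$ contributes trivially for the gauge elements arising here is more specific: each such $g$ takes values in the one-parameter subgroup $\{e^{t\xi}\}\subset\su$, a circle, and equals the identity at the endpoints of each collar so it extends by $1$ to all of $\Sigma$. Choose the null-homotopy $g_u=e^{u f\xi_0}$ (with $g=e^{f\xi_0}$), which stays inside the same circle; then $\bar g^{-1}\dd\bar g$ is valued in the one-dimensional abelian subalgebra $\R\xi_0$, so $[\bar g^{-1}\dd\bar g\wedge\bar g^{-1}\dd\bar g]=0$ and hence $\bar g^*\chi=0$ identically on the cylinder of the homotopy. Capping with a handlebody carrying the constant map then gives $W(g)=0$ exactly (not just modulo $4\pi^2$). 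You should replace the $2$-connectedness argument with this abelian-image argument; once that is done, the rest of your computation is the paper's.
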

\begin{proof}
Orient all curves $\gamma_i$ and choose a cylinder $\Psi_i:S^1\times [0,1]\to \Sigma$ around them. Let $A$ be a flat connection in $\Omega^1_{\flat}(\Sigma,\lu)$ representing $\rho$ and suppose that it normalized along each cylinder: in formulas there are vectors $\xi_i$ in $\lu$ such that $\Psi_i^*A=\xi_i \dd t$. We showed in Proposition \ref{flow} that the Hamiltonian flow of $h_i$ changes $A$ to $A_T=\Phi^T_{\gamma_i}A$ such that $\Psi_i^*A_T=\xi \dd t -\frac{T\xi}{\sqrt{2}|\xi|}\phi(s)\dd s$. Suppose that at time T the element of $\boL$ is represented by $(A_T,\theta_T)\in \Omega^1_{\flat}(\Sigma,\lu)\times \R/2\pi\Z$ with $\theta_0=0$. Then as in Section \ref{prequantum}, this path in $\boL$ is parallel if and only if $\lambda(\frac{\dd}{\dd T}(A_T,\theta_T))=\theta_T'-\frac{1}{4\pi}\int_{\Sigma}\langle A_T,A_T'\rangle=0$. This equation reduces to $\theta_T'=-\frac{|\xi|}{4\sqrt{2}\pi}$ and $\theta_T=-\frac{T|\xi|}{4\sqrt{2}\pi}$. A computation shows that $(A_{4\pi},\theta_{4\pi})$ is equivalent to $(A,-\sqrt{2}|\xi|)$. This proves the first result as $|\xi|=\sqrt{2}h_{\gamma_i}$.

To compute the second term, we only need to replace $4\pi$ with $2\pi$ and take into account the contributions of the three curves $\gamma_i,\gamma_j$ and $\gamma_k$. The pair $(A,0)$ is transported to $(A,-\frac{\sqrt{2}}{2}(|\xi_e|+|\xi_f|+|\xi_g|))$ which gives the second result of the proposition.
\end{proof}

It remains to compute the holonomy of the half-form bundle $\delta$ along the fibre which we do in the following proposition.
\begin{proposition}
Let $L$ be the distribution of Lagrangian subspaces in $T\boM^{\circ}(\Sigma)$ given by $L_{\rho}=\ker D_{\rho}h$. Denote by $\delta$ the bundle $\det^{1/2}(L)$ as before. Then the holonomy of $\delta$ is 1 along $e_i$ and $-1$ along $e_v$.
\end{proposition}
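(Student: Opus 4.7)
The plan proceeds in three stages: reduce to computing a sign, lift each loop to a path of flat connections, and compute the sign via local contributions on the subsurface where the closing gauge acts nontrivially.

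First, the Bott flat structure on $L$ comes from the identification $L_\rho\simeq T^*_{h(\rho)}\Delta$, $v\mapsto i_v\omega$. Since $i_{X_{\gamma_j}}\omega=\dd h_{\gamma_j}=\dd\alpha_j$ corresponds to a constant form on $\Delta$, the frame $(X_{\gamma_j})_{j\in I}$ is Bott-parallel. Consequently $\det(L)$ has trivial holonomy along any loop in a torus fibre, so $\delta$ has holonomy in $\{\pm 1\}$ and only the sign remains to be determined.

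Second, by Proposition~\ref{flow} and the discussion immediately following it, the loop $e_i$ lifts to the path $A_T=A+TX_{\gamma_i}$, $T\in[0,4\pi]$, of flat connections closed by the gauge transformation $g_i$ supported in an annular neighborhood of $\gamma_i$ and equal to $I$ on its boundary; the loop $e_v$ lifts to a path closed by $g_v$ equal to $-I$ on the interior of $P_v$ and $I$ on its complement, interpolated smoothly through the three annuli around $\gamma_i,\gamma_j,\gamma_k$. Under the identification $\sp(T_\rho\boM)\simeq\sp(C^*(\Sigma,\g))$ produced by the proof of the preceding proposition, the half-form holonomy equals the sign with which the closing gauge acts on $\det^{1/2}(C^*(\Sigma,\g))$, the lift to the metalinear cover being prescribed by the path in the gauge group that realises the gauge transformation.

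For the loop $e_i$, the slice $s\mapsto g_i(t,s)$ at each $t\in S^1$ traces a contractible loop in $\su\simeq S^3$ based at $I$; projected under $\Ad$ to $SO(\g)$ this is a pair of full rotations, the trivial class in $\pi_1(SO(\g))=\mathbb{Z}_2$. The cell-by-cell metalinear lift therefore returns to the identity and the holonomy is $+1$. For the loop $e_v$, on each cell of $P_v$ the path realising $g_v$ goes from $I$ to $-I$ in $\su$, a half-loop whose $\Ad$-image is one full rotation in $SO(\g)$, representing the nontrivial class in $\pi_1(SO(\g))$; this contributes a factor $-1$ per cell. The total sign is $(-1)^{n(P_v)}$, where $n(P_v)$ is the number of cells of $P_v$. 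Using the parity invariance $n(P_v)\equiv\dim H^*(P_v,\mathbb R)\pmod{2}$ (i.e. Euler characteristic modulo $2$) and $\dim H^*(P_v,\mathbb R)=1+2+0=3$, one obtains $(-1)^{3}=-1$.

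The main obstacle is the clean bookkeeping of signs through Lemma~\ref{lem-spin}: one must verify that the cell-level metalinear contributions descend consistently through the natural transformations induced by the short exact sequences $0\to B^{i-1}\to Z^i\to H^i\to 0$ and $0\to Z^i\to C^i\to B^i\to 0$ to a well-defined sign on $\det^{1/2}(H^*(\Sigma,\g))$, independent of the chosen cellular decomposition of $\Sigma$. The parity argument above is the essential ingredient that ensures this independence.
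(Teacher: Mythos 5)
Your reduction to a sign is correct: $\delta^{\otimes 2}\simeq\det(L)$ is canonically trivialized by the Bott-parallel frame $(X_{\gamma_j})$, so the holonomy lives in $\{\pm 1\}$. The paper makes the same reduction implicitly. After that, your route diverges from the paper's, and the divergent part contains a genuine gap.

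The paper's argument is a ``half-period'' reduction plus an explicit matrix computation. It observes that $\Phi_{\gamma}^{2\pi}\rho=\gamma^{\#}\rho$, and since $\Ad_{\rho}=\Ad_{\gamma^{\#}\rho}$ the Lagrangian sub-bundles at the two endpoints coincide, so the $\delta$-holonomy over one half-period $T\in[0,2\pi]$ makes sense; one then shows this half-period holonomy is $-1$ by writing down, in a concrete representation with $\rho(A_i)=\bi$, $\rho(B_i)=\bj$, a $t$-dependent basis of $C_1$ adapted to the filtration $B_1\subset Z_1\subset C_1$ and to the Lagrangian splitting, and checking that the resulting loop in $\mathrm{GL}(C_1)$ is the nontrivial class (a single $2\times 2$ rotation block of angle $t$). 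The statement then follows because $e_i$ is two half-periods and $e_v$ is three.

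Your argument instead tries to compute the sign by a per-cell count, attributing a $-1$ to each cell of $P_v$ via the $\su\to\mathrm{SO}(3)$ double cover and then using Euler-characteristic parity. This has two problems. First, the equivalence $\sp(T_{\rho}\boM)\simeq\sp(C^*(\Sigma,\Ad_{\rho}))$ of Lemma \ref{lem-spin} goes through the $\rho$-dependent short exact sequences $0\to B^{i-1}\to Z^i\to H^i\to 0$ and $0\to Z^i\to C^i\to B^i\to 0$; as $\rho_T$ varies along the loop, the subspaces $B^i,Z^i$ and the chosen Lagrangian splitting all move, and it is exactly this motion that the paper's matrix $M(t)$ tracks. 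Your claim that the resulting holonomy is the product of independent per-cell signs would require proving that the path in $\mathrm{GL}(C^*)$ encoding these moving identifications is homotopic to a block-diagonal path with one $\mathrm{SO}(3)$-factor per cell, and this is not established — you acknowledge this as ``the main obstacle'' but do not close it, and the Euler-parity observation (while true: $n(P_v)\equiv\chi(P_v)\equiv 3\pmod 2$) does not by itself supply the missing block-diagonalization.

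Second, and more basically, your sign computation is applied to the wrong loop. You analyze the spatial loop $s\mapsto g_i(t,s)$ across the annulus, and the spatial constancy $g_v\equiv -I$ on $\inter(P_v)$, but the $\delta$-holonomy is the monodromy of a frame along the \emph{temporal} path $T\mapsto[\rho_T]$ in the fibre of $h$. The closing gauge $g_v$ acts on $C^*$ by $\Ad_{g_v}$, which is the identity (since $g_v$ takes values in $\{\pm I\}$ wherever it is locally constant), so the endpoint identification contributes nothing; the sign must come from the $T$-parametrized path of differentials and Lagrangians, not from the degree or $\pi_1$-class of the gauge map itself. This is precisely what the paper's computation extracts and what your argument, as written, does not.
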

\begin{proof}
We prove this proposition by considering half-periods: suppose that two representations $\rho_1,\rho_2:\pi_1(\Sigma)\to \su$ are the same when composed with the projection $\su\to$SO$(3)$. Then, because $\Ad_{\rho_1}=\Ad_{\rho_2}$, we have $T_{[\rho_1]}\boM(\Sigma)=T_{[\rho_2]}\boM(\Sigma)$.
Moreover, if we start the flow $\Phi_{\gamma}$ from a representation $\rho$ during a time $2\pi$, we reach the representation $\gamma^\#\rho$ where $\gamma^\#$ is the Poincar\'e dual of $\gamma$ in $H^1(\Sigma,\Z_2)=\Hom(\pi_1(\Sigma),\{\pm 1\})$. This fact is a direct consequence of the expression of the flow in Section \ref{flow}.
Finally, the map $h$ satisfies $h(\gamma^\# \rho)=\pm h(\rho)$. This shows that the Lagrangian subbundles at $\rho$ and $\gamma^\#\rho$ correspond so that we are allowed to compute the holonomy of $\delta$ along that path. If we show that this holonomy is $-1$ we prove the proposition as $e_i$ is a composition of 2 such paths and $e_v$ is a composition of 3 of them.

As $\inter(\Delta)$ is convex, all fibers of $h$ are isotopic and the holonomy of $\delta$ do not depend on which fiber we consider. Let us do the computation in the following case: let $A_i,B_i$ be the standard generators of $\pi_1(\Sigma)$ satisfying the following relation with $n$ even:
$$A_1B_1A_1^{-1}B_1^{-1}\cdots A_nB_nA_n^{-1}B_n^{-1}=1.$$

Then we define an element $\rho$ of $\boM^{\circ}(\Sigma)$ by setting $\rho(A_i)=\bi$, $\rho(B_i)=\bj$. Suppose that $\gamma$ is the curve represented by $A_1$ and set $[\rho_t]=\Phi_{\gamma}^t[\rho]$. Then we set $\rho_t(B_1)=\bj e^{\bi t/2}$ and the other values are unchanged.

The cell decomposition of $\Sigma$ consists in one 0-cell, $2n$ 1-cells and 1 2-cell. We identify $C_0(\Sigma,\Ad_{\rho_t})$ and $C_2(\Sigma,\Ad_{\rho_t})$ with $\lu$ and $C_1(\Sigma,\Ad_{\rho_t})$ with $\lu^{2n}$. The differentials $\dd h_{\gamma_i}$ belong to $H_1(\Sigma,\Ad_{\rho})$ and generate $L_{\rho}^*$, the dual of the Lagrangian subspace we are interested in. This elements are represented as twisted cycles by $\gamma_i\otimes f_i$ were $f_i$ is a non zero element of $\lu$ fixed by $\Ad_{\rho(\gamma_i)}$. This shows that all of them can be represented in $C_1$ by vectors which do not depend on $t$. Let us denote them by $F_i$ for $i\in I$. In order to understand the metalinear structure on $L$, we need to extend this basis to a symplectic basis of $H^1(\Sigma)$. We obtain in that way vectors $G_i$ for $i$ in I. We can choose them arbitrarily for $t=0$ and modify them in the vicinity of $\gamma$ for $t>0$. By this procedure, we can suppose that only the first two projections of the vectors $G_i$ depend on $t$.

Considering a fixed basis of $C_2$, we push it to $C_1$ with the injective map $\partial_2$ and get 3 vectors $U_1,U_2,U_3$. At the same time, we consider three vectors $V_1,V_2,V_3$ in $C_1$ whose image by $\partial_1$ is a fixed basis of $C_0$. By construction we get a basis $(V_1,V_2,V_3,U_1,U_2,U_3,F_i,G_i)$ of $C_1$. This basis depends on $t$ and defines for $t\in [0,2\pi]$ a closed path in GL$(C_1)$. The holonomy we are looking for is the homotopy class of this path.

To compute this path, we use the decomposition of $C_1$ into blocks corresponding to $A_1,B_1$ on one side and the other generators on the other side.
First, we have $\partial_1 (\xi_i,\eta_i)=\sum_i \left( \Ad_{\rho_t(A_i)} \xi_i-\xi_i + \Ad_{\rho_t(B_i)}\eta_i-\eta_i \right)$. In particular, $\partial_1(\xi_1,\eta_1)=\bi\xi_1\bi^{-1}+\bj e^{\bi t/2} \eta_1 (\bj e^{\bi t/2})^{-1}$ so that $\partial_1(-\bj z/2,-\bi x/2)=\bi x+\bj z$ for $x\in \R$ and $z\in \C$ so that we can set $\tl{V}_1=(0,-\bi/2), \tl{V_2}=(-\bj/2,0), \tl{V}_3=(\bk/2,0)$.

On the other hand, we compute
$$\partial_2(\xi)=(\bi\xi \bi^{-1}-\bk e^{\bi t/2}\xi e^{-\bi t/2}\bk^{-1},k e^{\bi t/2}\xi e^{-\bi t/2} k^{-1}-\bj e^{\bi t/2}\xi e^{-\bi t/2} \bj^{-1},\ldots)$$ where the dots mean the remaining components which do not depend on $t$.
We obtain $U_1=\partial_2(\bi)=(2\bi,0,\ldots), U_2=\partial_2(\bj)=(-\bj(e^{\bi t}+1),-2\bj e^{\bi t},\ldots),U_3=\partial_2(\bk)=(\bk(1-e^{\bi t}),-2\bk e^{\bi t},\ldots)$.

Writing the matrix of this basis we find a matrix $M(t)$ with the property that its derivative is block triangular with on the diagonal zeros and one 2*2 rotation matrix $R_t$ of angle $t$. This proves that in GL$(C_1)$, this path is not topologically trivial as the inclusion of $\text{GL}_2 \subset \text{GL}_n$ at the level of fondamental groups sends $R_t$ to the generator. This ends the proof in our case. One can show it for odd genus and separating curve $\gamma$ by considering other examples which are similar.
\end{proof}

Putting these results together, we find that Bohr-Sommerfeld orbits are parametrized by tuples $(\frac{\sigma_i \pi}{K})_{i\in I}$ where $\sigma_i$ are integers belonging to $[1,K-1]$ and satisfy the conditions that for all triples $i,j,k$ of adjacent edges,
\begin{enumerate}
\item $\sigma_i\le \sigma_j+\sigma_k$
\item $\sigma_i+\sigma_j+\sigma_k$ is odd.
\item $\sigma_i+\sigma_j+\sigma_k\le 2K$.
\end{enumerate}
These conditions are referred to as quantum Clebsch-Gordan conditions and describe a basis of the quantization of $\boM(\Sigma)$ as constructed for instance in \cite{bhmv}.

\subsection{Going further}

At this point, we defined the geometric quantization of the moduli space $\boM(\Sigma)$ as a finite dimensional Hilbert space depending on a Lagragian fibration, itself depending on a pants decomposition of $\Sigma$. The theory can be developped in the following directions:

\begin{enumerate}
\item
Some easy developments: count the number of Bohr-Sommerfeld fibers and compare to the Verlinde formula (the dimension of conformal blocks, see \cite{tensor} or \cite{bhmv}). Explain how the functions $h_i$ are quantized and act naturally on the quantization. This also provides a quantization of the Dehn twists acting on $\boM(\Sigma)$ with an explicit spectrum.
\item
Given an other pants decomposition, we have a new construction of the quantization which can be compared to the previous one. Suppose that the fibrations are transverse to each other. Then the half-forms sections can be intersected and give a pairing between the two quantizations called Blattner-Kostant-Sternberg pairing (BKS pairing, see \cite{gs,weinstein}). It is not clear wether this pairing gives a unitary isomorphism between the two quantizations. If so, it would give a satisfactory description of the quantization of $\boM(\Sigma)$ where all trace functions would be quantizable, and the action of the mapping class group of $\Sigma$ would extend to the quantization.
\item
A 3-manifold $M$ bounding $\Sigma$ produces a Lagrangian immersion $\boM(M)\to\boM(\Sigma)$ with a flat section $CS$ of the bundle $\boL$ and a volume form $T$ on $\boM(M)$. If we manage to find a well-defined square root of this form, we obtain a semi-classical state associated to $M$. Using the BKS-pairing, this states may be viewed as a vector belonging to any quantization, see \cite{jw3}.
\item
All these data should sit into a Topological Quantum Field Theory (TQFT), that is may have functorial properties with respect to the gluing of 3-manifolds along their boundaries. Moreover, we expect that this TQFT appears as the semi-classical approximation of a family of TQFTs indexed by the integer $K$ called level. These TQFTs may be constructed either by geometric quantization (with complex polarization, see \cite{witten,physics}) or with link polynomials and quantum groups (see \cite{rt,bhmv}).
\end{enumerate}

\frenchspacing

\end{document}